\newtheorem{theorem}{Theorem}[section]
\newtheorem{conjecture}[theorem]{Conjecture}
\newtheorem{corollary}[theorem]{Corollary}
\newtheorem{lemma}[theorem]{Lemma}
\newtheorem{proposition}[theorem]{Proposition}
\theoremstyle{remark}
\numberwithin{equation}{section}
\newcommand{\pfrak}{\mathfrak{p}}
\newcommand{\nfrak}{\mathfrak{n}}
\newcommand{\Pcal}{\mathscr{P}}
\newcommand{\Qcal}{\mathscr{Q}}
\newcommand{\Scal}{\mathscr{S}}
\newcommand{\Z}{\mathbb{Z}}
\newcommand{\C}{\mathbb{C}}
\newcommand{\F}{\mathbb{F}}
\newcommand{\Q}{\mathbb{Q}}
\newcommand{\Aut}{\mathrm{Aut}}
\newcommand{\rk}{\mathrm{rank}\,}
\newcommand{\Hom}{\mathrm{Hom}}
\newcommand{\Gal}{\mathrm{Gal}}
\newcommand{\Frob}{\mathrm{Frob}}
\newcommand{\tr}{\mathrm{tr}}
\newcommand{\Tam}{\mathrm{Tam}}
\newcommand{\Sel}{\mathrm{Sel}}
  \DeclareFontFamily{U}{wncy}{}
    \DeclareFontShape{U}{wncy}{m}{n}{<->wncyr10}{}
    \DeclareSymbolFont{mcy}{U}{wncy}{m}{n}
    \DeclareMathSymbol{\Sha}{\mathord}{mcy}{"58}
\begin{document}
\title[]{Towards Hilbert's Tenth Problem for rings of integers through Iwasawa theory and Heegner points}

\author{Natalia Garcia-Fritz}
\address{ Departamento de Matem\'aticas\newline
\indent Pontificia Universidad Cat\'olica de Chile\newline
\indent Facultad de Matem\'aticas\newline
\indent 4860 Av. Vicu\~na Mackenna\newline
\indent Macul, RM, Chile}
\email[N. Garcia-Fritz]{natalia.garcia@mat.uc.cl}

\author{Hector Pasten}
\address{ Departamento de Matem\'aticas\newline
\indent Pontificia Universidad Cat\'olica de Chile\newline
\indent Facultad de Matem\'aticas\newline
\indent 4860 Av. Vicu\~na Mackenna\newline
\indent Macul, RM, Chile}
\email[H. Pasten]{hector.pasten@mat.uc.cl}%

\thanks{N. G.-F. was supported by the FONDECYT Iniciaci\'on en Investigaci\'on grant 11170192, and the CONICYT PAI grant 79170039. H.P. was supported by FONDECYT Regular grant 1190442.}
\date{\today}
\subjclass[2010]{Primary 11G05; Secondary 11U05.} %
\keywords{Hilbert's tenth problem, Diophantine set, rank of elliptic curves, quadratic twists, Iwasawa theory, Heegner points.}%

\begin{abstract} For a positive proportion of primes $p$ and $q$, we prove that $\Z$ is Diophantine in the ring of integers of $\Q(\sqrt[3]{p},\sqrt{-q})$. This provides a new and explicit infinite family of number fields $K$ such that Hilbert's tenth problem for $O_K$ is unsolvable. Our methods use Iwasawa theory and congruences of Heegner points in order to obtain suitable rank stability properties for elliptic curves.
\end{abstract}

\maketitle



\section{Introduction} 

A celebrated result due to Matiyasevich \cite{Matiyasevich} after the work of Davis, Putnam, and Robinson \cite{DPR}, shows that computably enumerable sets over $\Z$ are exactly the same as Diophantine sets over $\Z$. As a consequence, Hilbert's tenth problem is unsolvable: namely, there is no algorithm (Turing machine) that takes as input polynomial equations over $\Z$ and decides whether they have integer solutions. 

A natural extension is the analogue of Hilbert's tenth problem for rings of integers of number fields, which remains as one of the main open problems in the area. In the direction of a negative solution, the following conjecture formulated by Denef and Lipshitz \cite{DenLip} is widely believed (the notion of Diophantine set is recalled in Section \ref{SecDioph}):
\begin{conjecture} \label{MainConj} Let $K$ be a number field $K$. Then $\Z$ is a Diophantine subset of $O_K$.
\end{conjecture}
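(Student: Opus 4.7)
The plan is to apply the now-standard elliptic curve approach to Hilbert's tenth problem. A theorem of Poonen and Shlapentokh, refining earlier work of Denef, Denef--Lipshitz and others, reduces Conjecture \ref{MainConj} to the following Diophantine-geometric statement: for every number field $K$ there exists an elliptic curve $E$ over $\Q$ of positive rank such that $\rk E(K) = \rk E(\Q)$. Variants of this reduction permit $E$ to be defined over an intermediate subfield of $K$ and impose the rank condition only for a prime-degree subextension. Hence, modulo this known reduction, the remaining content of the conjecture is the existence of rank-preserving elliptic curves under arbitrary number field extensions.

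First I would pass, by a tower argument through the normal closure, from an arbitrary $K/\Q$ to successive prime-degree steps $K_i \subset K_{i+1}$, so that the problem decomposes into controlling rank growth across one extension of prime degree at a time. Second, for each such step I would attempt to produce the required curve by exploiting the three ingredients foregrounded in the paper under review: quadratic twisting to generate abundant families of candidate curves; Heegner point constructions over an auxiliary imaginary quadratic field to force positive rank over the base; and Iwasawa theory, especially control of Selmer groups along anticyclotomic or cyclotomic $\Z_p$-towers, to guarantee that the Mordell--Weil rank does not jump upon base change to $K_{i+1}$. Congruences of Heegner points modulo suitable primes, combined with Kolyvagin-style or Euler-system bounds on Selmer groups, would be the mechanism by which rank stability transfers from one curve in a family to its neighbours.

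The main obstacle is that the Heegner and Iwasawa machinery demands hypotheses that one cannot in general arrange for an arbitrary prime-degree extension $K_{i+1}/K_i$: a compatible imaginary quadratic field satisfying a Heegner hypothesis relative to the modular form attached to $E$, primes of controlled reduction behaviour compatible with the ramification of $K_{i+1}/K_i$, and a non-vanishing of the relevant Heegner point modulo the congruence prime. The authors succeed for the special shape $K = \Q(\sqrt[3]{p},\sqrt{-q})$ precisely because the tower $\Q \subset \Q(\sqrt{-q}) \subset K$ is tailor-made to meet these hypotheses for a positive proportion of $(p,q)$. A proof of the full conjecture along this route would require either a substantial generalisation of the rank-stability inputs to arbitrary number field extensions, or genuinely new techniques, perhaps based on higher-dimensional abelian varieties or on Euler systems of broader scope, for which analogous rank-preservation statements can be established unconditionally.
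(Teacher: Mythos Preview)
The statement you are attempting to prove is Conjecture \ref{MainConj}, which the paper explicitly presents as an \emph{open problem}; there is no proof of it in the paper, only partial results (Theorem \ref{ThmMainIntro}) covering specific families of number fields. Your proposal is therefore not being compared to any existing proof, and your own final paragraph makes clear that you do not have one either: you correctly identify that the Heegner and Iwasawa inputs cannot presently be arranged for an arbitrary prime-degree step $K_{i+1}/K_i$, and that ``a proof of the full conjecture along this route would require either a substantial generalisation \ldots\ or genuinely new techniques.'' That is the genuine gap, and you have already named it yourself.

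A few inaccuracies in your description of the paper's strategy are worth flagging. The tower actually used is $\Q \subset \Q(\sqrt[3]{p}) \subset \Q(\sqrt[3]{p},\sqrt{-q})$, not via $\Q(\sqrt{-q})$: the first inclusion is handled by the Pheidas--Shlapentokh--Videla theorem (one complex place), and only the second by the elliptic curve criterion. Moreover, the roles of the two analytic inputs are not quite as you describe: Iwasawa theory is used to force $\rk E(\Q(\sqrt[3]{p}))=0$ (not to prevent a rank jump from a positive base rank), while Heegner points force $\rk E(\Q(\sqrt{-q}))>0$; these are then combined via a quadratic-twist trick (Proposition \ref{PropMain}) rather than by directly exhibiting a rank-preserving curve over $\Q$. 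None of this rescues the proposal, but it clarifies why the paper's method is confined to fields of this particular shape.
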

By a standard argument, if $\Z$ is Diophantine in $O_K$ for a number field $K$, then the analogue of Hilbert's tenth problem for $O_K$ has a negative solution, and moreover, the Diophantine sets over $O_K$ are the same as computably enumerable subsets. Thus, the efforts have concentrated in proving new cases of Conjecture \ref{MainConj}. 

However, progress on Conjecture \ref{MainConj} has been slow and extremely difficult. First, one has the following known cases dating back to the eighties:
\begin{itemize}
\item[(i)] $K$ is totally real or a quadratic extension of a totally real field \cite{DenLip, Denef}
\item[(ii)] $[K:\Q]=4$, $K$ is not totally real, and $K/\Q$ has a proper intermediate field \cite{DenLip}
\item[(iii)] $K$ has exactly one complex place \cite{Pheidas, ShlH10, Videla}
\item[(iv)] $K$ is contained in a number field $L$ such that $\Z$ is Diophantine in $O_L$; in particular, this holds when $K/\Q$ is abelian \cite{ShaShl}
\end{itemize}
 Later work by Poonen \cite{Poonen}, Cornelissen-Pheidas-Zahidi \cite{CPZ}, and Shlapentokh \cite{ShlE} reduced Conjecture \ref{MainConj} to the existence of elliptic curves preserving its (positive) rank in suitable extensions of number fields (cf. Section \ref{SecDioph}). This allowed to verify Conjecture \ref{MainConj} on a case-by-case basis for concrete examples of number fields by exhibiting a suitable elliptic curve (see the Paragraph \ref{SubSecIntDioph} for the worked-out example $K=\Q(\sqrt[5]{2})$). Besides specific examples, the elliptic curve criteria also led  to the following more recent result by Mazur and Rubin:
\begin{itemize} 
\item[(v)] For any number field $F$ for which $\Z$ is Diophantine in $O_F$, there is a positive proportion of primes $\ell$ such that for all $n\ge 1$, there are infinitely many choices of $K$ as a cyclic $\ell^n$-extension of $F$ for which $\Z$ is Diophantine in $O_K$. (cf.\ \cite{MazRubDS}; see also \cite{MazRubH10}).
\end{itemize}
This concludes our summary of known cases of Conjecture \ref{MainConj}.


In this work we develop a method to attack Conjecture \ref{MainConj} which allows us to prove it in several new cases. Our main result is the following:
\begin{theorem} \label{ThmMainIntro} There are explicit Chebotarev sets of primes $\Pcal$ and  $\Qcal$ having positive density in the primes, such that for all $p\in \Pcal$ and all $q\in \Qcal$ we have that $\Z$ is Diophantine in the ring of integers of $\Q(\sqrt[3]{p}, \sqrt{-q})$.  In particular, for these fields $K=\Q(\sqrt[3]{p}, \sqrt{-q})$, the analogue of Hilbert's tenth problem for $O_K$ is unsolvable.

The sets $\Pcal$ and $\Qcal$ can be chosen to have densities $5/16>31\%$ and $1/12>8\%$ respectively.

\end{theorem}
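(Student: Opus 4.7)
The plan is to apply the elliptic curve criterion of Poonen, Cornelissen--Pheidas--Zahidi, and Shlapentokh (discussed in Section~\ref{SecDioph}): to show that $\Z$ is Diophantine in $O_K$ it suffices to exhibit an elliptic curve $E$ and an intermediate field $F$ with $\Z$ Diophantine in $O_F$ such that $\rk E(F)=\rk E(K)\ge 1$. Taking $F=\Q(\sqrt{-q})$, the Diophantineness of $\Z$ in $O_F$ comes for free from the classical case (i), and $K/F$ becomes the non-Galois cubic extension $F(\sqrt[3]{p})/F$; the problem thus reduces to producing positive rank over $F$ and showing it does not grow upon adjoining $\sqrt[3]{p}$.

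For positive rank over $F$, I would fix a well-chosen elliptic curve $E_0/\Q$ of analytic rank one, and impose congruence conditions on $q$ so that $F=\Q(\sqrt{-q})$ satisfies the Heegner hypothesis for $E_0$ and the global root number of $E_0/F$ equals $-1$. Gross--Zagier and Kolyvagin then yield a non-torsion Heegner point in $E_0(F)$, hence $\rk E_0(F)\ge 1$. The Chebotarev conditions on $q$ (prescribing the splitting of bad primes of $E_0$, controlling local root numbers, and fixing the image of Frobenius in a small-level residual representation) cut out the set $\Qcal$, accounting for the $1/12$ density.

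The main obstacle, and the core contribution announced in the abstract, is the rank-stability $\rk E_0(K)=\rk E_0(F)$. Setting $L=K(\zeta_3)$, the group $\Gal(L/F)$ is isomorphic to $S_3$, and by Artin formalism the difference $\rk E_0(K)-\rk E_0(F)$ equals the multiplicity $c$ of the $2$-dimensional standard representation of $S_3$ in $E_0(L)\otimes\Q$. I would aim to prove $c=0$ by bounding a $p$-adic Selmer group of $E_0$ over $K$ twisted by a non-trivial character of $\Gal(\overline{\Q}/F)$ of order $3$ cutting out $K\cdot\Q(\zeta_3)$. The key input is anticyclotomic Iwasawa theory along a dihedral $\Z_p$-tower above $F$ together with congruences between Heegner points modulo $p$ in the spirit of Bertolini--Darmon, Howard, and Vatsal, which yield Euler-system-type vanishing of the relevant Selmer modules. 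The Chebotarev condition on $p\in\Pcal$ (density $5/16$) would fix the local behaviour at $p$---reduction type of $E_0$, image of $\Frob_p$, splitting in auxiliary extensions---needed for the Iwasawa machinery to apply.

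I expect the principal difficulty to lie precisely in this last step, for two intertwined reasons. First, $K=F(\sqrt[3]{p})$ is not contained in any anticyclotomic $\Z_\ell$-tower of $F$, so the desired vanishing must first be translated into a statement about a Selmer group that \emph{is} controlled by dihedral Iwasawa theory; a natural route is to pass up to $F(\zeta_3)$, where the cubic $\sqrt[3]{p}$-extension becomes a cyclic Kummer one and can be related to auxiliary ring class fields of $F$. Second, the Heegner point congruence arguments require delicate compatibility between Euler system norm relations and the mod-$p$ reduction of the Heegner point, which is exactly where the Chebotarev condition on $p$ plays its technical role. Once $c=0$ is established, combining with positive rank over $F$ and invoking the elliptic curve criterion yields $\Z$ Diophantine in $O_K$ for all $p\in\Pcal$ and $q\in\Qcal$.
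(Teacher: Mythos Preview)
Your proposal diverges from the paper's approach in a structural way, and the divergence introduces a genuine gap.

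\textbf{Choice of intermediate field and the twist trick.} The paper takes $F=\Q(\sqrt[3]{p})$, not $\Q(\sqrt{-q})$. Since $\Q(\sqrt[3]{p})$ has exactly one complex place, $\Z$ is Diophantine in $O_F$ by the Pheidas--Shlapentokh--Videla theorem (item (iii) in the introduction). The extension $K/F$ is then \emph{quadratic}, and rank stability over a quadratic extension follows from an elementary twist identity (Proposition~\ref{PropMain}): for $L=\Q(\sqrt{-q})$ and $E^L$ the quadratic twist, one has $\rk E^L(K)=\rk E(F)+\rk E^L(F)$. Thus if $\rk E(F)=0$ then $\rk E^L(K)=\rk E^L(F)$ automatically, and if moreover $\rk E(L)>0$ then $\rk E^L(K)\ge \rk E(L)>0$, so Shlapentokh's criterion applies to $E^L$ and $K/F$. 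The problem decouples into two \emph{independent} conditions on a single curve $E/\Q$: (a) $\rk E(\Q(\sqrt[3]{p}))=0$, and (b) $\rk E(\Q(\sqrt{-q}))>0$.

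\textbf{How (a) and (b) are obtained.} Condition (a) uses \emph{cyclotomic} Iwasawa theory at $\ell=3$: for the specific curve $E=557b1$ one verifies that $X(E/\Q(\mu_3)_\infty)$ is finite, and then the Hachimori--Matsuno formula (Theorem~\ref{ThmHM}) propagates $\lambda=0$ to $\Q(\mu_3,\sqrt[3]{p})$ whenever $p$ lies in the Chebotarev set $\Pcal(E,3)$ of density $5/16$ (Theorem~\ref{ThmRkZero}). Condition (b) is not obtained by classical Gross--Zagier--Kolyvagin for a rank-one curve, but by Kriz--Li's mod-$2$ Heegner-point congruence (Theorem~\ref{ThmKL}, Corollary~\ref{CoroKL}) applied to the \emph{same} rank-zero curve $E=557b1$ with the auxiliary field $\Q(\sqrt{-7})$; this gives $\rk E^{-q}(\Q)=1$ for $q$ in a Chebotarev set $\Qcal$ of density $1/12$.

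\textbf{The gap in your approach.} You ask for rank stability over the \emph{cubic} extension $K/\Q(\sqrt{-q})$ for a curve already of positive rank at the bottom. Iwasawa-theoretic tools are good at propagating $\lambda=0$ (hence rank zero), not at freezing a positive rank. Your proposed route through anticyclotomic theory and ring class fields of $F=\Q(\sqrt{-q})$ does not go through: as you yourself note, $K$ lies in no anticyclotomic $\Z_\ell$-tower of $F$, and since $\Gal(K(\zeta_3)/F)\simeq S_3$ is non-abelian, $K(\zeta_3)$ is not contained in any ring class field of $F$ either, so the translation you sketch is unavailable. No existing Euler-system or Heegner-congruence machinery bounds the relevant $S_3$-twisted Selmer group in this generality. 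The paper avoids this difficulty entirely by reversing the roles of the cubic and quadratic layers: the quadratic-twist decomposition makes rank stability over $K/\Q(\sqrt[3]{p})$ automatic once one has rank zero over $\Q(\sqrt[3]{p})$, and the latter is exactly what cyclotomic Iwasawa theory delivers.
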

The sets of prime numbers $\Pcal$ and $\Qcal$ from our construction will be made explicit in Section \ref{SecProof}.

Theorem \ref{ThmMainIntro} precisely covers one of the simplest kinds of number fields which is out of the scope of all the available results on Hilbert's tenth problem for rings of integers ---see Paragraph \ref{SubsecComparison} for details. 

Besides the fact that we prove new cases of Conjecture \ref{MainConj}, the main novelty in our work is the method of proof for Theorem \ref{ThmMainIntro}. Using Shlapentokh's elliptic curve criterion and the result of Pheidas, Shlapentokh and Videla quoted above in item (iii), we reduce the problem to the existence of a suitable elliptic curve $E$ over $\Q$ with $\rk E(\Q(\sqrt[3]{p}))=0$ and $\rk E(\Q(\sqrt{-q}))>0$. The first condition is achieved by a study of the variation of cyclotomic Iwasawa invariants under base-change, while the second one is achieved by means of a recent result by Kriz and Li \cite{KrizLi} on congruences of Heegner points in the context of the celebrated Goldfeld's conjecture. Both methods ---Iwasawa theory and Heegner points--- impose a series of technical conditions on the admissible elliptic curves $E$, and a critical aspect of the proof of Theorem \ref{ThmMainIntro} is to make sure that there exist elliptic curves that are admissible for both methods simultaneously.

Let us remark that the approach by Mazur and Rubin (cf.\ Item (v) above) is substantially different to ours: they achieve the necessary rank-stability conditions by delicate cohomological computations to modify Selmer ranks under quadratic twists.

We conclude this introduction by recalling that there is strong evidence for Conjecture \ref{MainConj}, at least if one assumes some standard conjectures on elliptic curves. On the one hand, Mazur and Rubin \cite{MazRubH10} proved that it follows from the squareness conjecture for the $2$-torsion of Shafarevich-Tate groups of elliptic curves over number fields. On the other hand,  Murty and Pasten \cite{MurtyPasten} proved that it follows from rank aspects of the Birch and Swinnerton-Dyer conjecture.


\section{Preliminaries on Iwasawa theory}

Our main results are relevant on logic aspects of number theory but part of our arguments require some Iwasawa theory. So, it might be useful to include here a brief reminder of the latter subject. Other than for the purpose of checking the notation, experts can safely skip this section. 

In this section $\ell$ denotes a prime number. We specialize to the case of cyclotomic Iwasawa theory for elliptic curves, which suffices for our purposes. All results discussed in this section can be found in \cite{MazurIwa} and \cite{Greenberg}.

\subsection{Algebra} \label{ParAlg} It is a standard fact that if $X$ is a finitely generated torsion $\Z_{\ell}[[T]]$-module, then $X$ is pseudo-isomorphic\footnote{A pseudo isomorphism is a morphism with finite kernel and cokernel.} to 
$$
\bigoplus_{i}\Z_{\ell}[[T]]/(\ell^{\mu_i})\oplus \bigoplus_j \Z_{\ell}[[T]]/(f_j^{m_j})
$$
where $\mu_i$ are positive integers and $f_j\in\Z_{\ell}[T]$ are monic, irreducible polynomials such that all non-leading coefficients are in $\ell\Z_{\ell}$.

The above decomposition is unique up to order, and one defines the following invariants of $X$:
\begin{itemize}
\item $\mu_X=\sum_i \mu_i$ the $\mu$-invariant
\item $f_X= \ell^{\mu_X}\prod_j f_j^{m_j} $ the characteristic polynomial
\item $\lambda_X=\deg f_X$ the $\lambda$-invariant.
\end{itemize}


\subsection{Iwasawa algebras and modules}\label{ParGal} For a number field $k$, we let $k_\infty$ be the maximal cyclotomic $\Z_\ell$-extension of $k$. For each $m\ge 0$ we let  $k_m/k$ for the unique cyclotomic $\Z/\ell^m\Z$-extension of $k$ contained in $k_\infty$, and we observe that $k_0=k$ and $k_\infty=\cup_m k_m$. The Galois group $\Gamma_k$ of $k_\infty/k$ is (non-canonically) isomorphic to $\Z_{\ell}$ as a profinite group. The \emph{Iwasawa algebra}  $\Lambda_k=\Z_{\ell}[[\Gamma_k]]$ is the profinite completion of the group algebra $\Z_{\ell}[\Gamma_k]$. The choice of a topological generator $\gamma\in\Gamma_k$ determines a continuous $\Z_{\ell}$-algebra isomorphism $\Psi_{k,\gamma}:\Lambda_k\to \Z_{\ell}[[T]]$ by the rule $\gamma\mapsto 1+T$.

Thus, one can study finitely generated torsion $\Lambda_k$-modules by means of the classification theorem of the previous paragraph. Using Selmer groups of elliptic curves, one can construct certain finitely generated modules $X(E/k_\infty)$ that are believed to be $\Lambda_k$-torsion.


\subsection{Selmer groups} Let $k$ be a number field and let $E$ be an elliptic curve over $k$. For any algebraic extension $L/k$ (not necessarily finite) the $\ell$-primary part of the Selmer group of $E$ over $L$ is denoted by $\Sel_{\ell^\infty}(E/L)$. It fits in the exact sequence
\begin{equation}\label{EqnSha}
0\to E(L)\otimes_\Z\Q_{\ell}/\Z_{\ell}\to \Sel_{\ell^\infty}(E/L)\to \Sha(E/L)[\ell^\infty]\to 0
\end{equation}
where $\Sha(E/L)$ is the Shafarevich-Tate group. 

It is known that $\Sel_{\ell^\infty}(E/k)\simeq (\Q_\ell/\Z_\ell)^{\rho_\ell(E/k)}\oplus G$ for certain integer $\rho_\ell(E/k)\ge 0$, where $G$ is a finite group. The number $\rho_\ell(E/k)$ is the co-rank of $\Sel_{\ell^\infty}(E/k)$ and the previous exact sequence gives $\rk E(k)\le \rho_\ell(E/k)$. Of course, if $\Sha(E/k)[\ell^\infty]$ is finite (as conjectured) then $\rk E(k)= \rho_\ell(E/k)$.

A foundational result of Mazur allows one to recover valuable arithmetic information about $E$ over all the number fields $k_m$ from the $\Gamma_k$-action on $\Sel_{\ell^\infty}(E/k_\infty)$.
\begin{theorem}[Mazur's control theorem] Let $E$ be an elliptic curve over $k$ having good ordinary reduction at each prime of $k$ above $\ell$. The natural maps
$$
\Sel_{\ell^\infty}(E/k_m)\to \Sel_{\ell^\infty}(E/k_\infty)^{{\rm Gal}(k_\infty/k_m)}
$$
have finite kernels and cokernels, whose orders are bounded as $m$ varies. 
\end{theorem}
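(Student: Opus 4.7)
The plan is to compare the Selmer groups of $E$ over $k_m$ and over $k_\infty$ through the commutative diagram defining them, and then to apply the snake lemma. Write $G_m = \Gal(k_\infty/k_m)$ and let $S$ denote the finite set of places of $k$ given by the archimedean places, the places of bad reduction of $E$, and the places above $\ell$. From the definition of the Selmer group one obtains a commutative diagram with exact rows
$$
0 \to \Sel_{\ell^\infty}(E/k_m) \to H^1(k_m, E[\ell^\infty]) \to \bigoplus_{v} H^1(k_{m,v}, E)[\ell^\infty]
$$
and
$$
0 \to \Sel_{\ell^\infty}(E/k_\infty)^{G_m} \to H^1(k_\infty, E[\ell^\infty])^{G_m} \to \bigg(\bigoplus_{w} H^1(k_{\infty,w}, E)[\ell^\infty]\bigg)^{G_m}
$$
connected by the natural restriction maps, which we denote $\alpha_m$, $\beta_m$, $\gamma_m$ from left to right. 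The snake lemma then reduces the theorem to bounding the orders of $\ker\beta_m$, $\coker\beta_m$, and $\ker\gamma_m$, uniformly in $m$.

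For the middle vertical map, the inflation-restriction sequence identifies $\ker\beta_m$ with $H^1(G_m, E(k_\infty)[\ell^\infty])$ and $\coker\beta_m$ with a subgroup of $H^2(G_m, E(k_\infty)[\ell^\infty])$. Since $E(k_\infty)[\ell^\infty]$ is finite (by a theorem of Imai in the non-CM case, and elementary in the CM case) and, for $m$ large, $G_m$ is topologically isomorphic to $\Z_\ell$, which has $\ell$-cohomological dimension one, we conclude that $\ker\beta_m$ is finite and bounded independently of $m$, while $\coker\beta_m = 0$.

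For $\ker\gamma_m$, one analyzes each local factor separately. If $v \notin S$ then $E$ has good reduction at $v$ and $v \nmid \ell$, whence $H^1(k_{m,v}, E)[\ell^\infty] = 0$ (unramified cohomology vanishes), so only places in $S$ contribute. For $v \in S$ with $v \nmid \ell$ only finitely many places of $k_m$ above $v$ appear (the cyclotomic extension is eventually unramified away from $\ell$), and a standard local calculation bounds the kernel at $v$ in terms of $|E(k_{m,v})[\ell^\infty]|$, which is uniformly bounded. For $v \mid \ell$ the good ordinary hypothesis is essential: it yields an exact sequence $0 \to C \to E[\ell^\infty] \to D \to 0$ of $G_{k_v}$-modules, where $C$ comes from the formal group and $D$ is unramified of corank one; a local computation using this filtration together with local Tate duality produces the required uniform bound on the local kernel contribution from places $w$ above $v$.

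Combining these bounds via the snake lemma yields the finiteness and $m$-independent bound on the kernel and cokernel of $\alpha_m$. The main obstacle is the local analysis at primes above $\ell$: without the good ordinary assumption, the interplay between the ramification in the cyclotomic $\Z_\ell$-tower and the $\ell$-adic Galois representation at these primes would force the local kernels to grow with $m$, so carrying out the ordinary filtration argument uniformly in $m$ is the delicate point of the proof.
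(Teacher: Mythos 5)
The paper does not prove Mazur's control theorem; it states it as a background result and cites \cite{MazurIwa} and \cite{Greenberg}. Your sketch is a correct outline of the standard proof as it appears in those references (the fundamental diagram, snake lemma, inflation--restriction together with the fact that $G_m\simeq\Z_\ell$ has $\ell$-cohomological dimension one, finiteness of $E(k_\infty)[\ell^\infty]$, and the local analysis with the ordinary filtration $0\to C\to E[\ell^\infty]\to D\to 0$ at primes above $\ell$). So there is no approach to compare against in the paper itself; you have reproduced the canonical argument.

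Two small remarks on precision. First, the snake lemma is applied to left-exact rows whose third terms are not the targets of surjections, so one should either truncate the third column to the images or argue directly with the cohomology sequences; this is implicit in the standard treatments but worth stating if you write this out in full. Second, your parenthetical about Imai versus an ``elementary CM case'' is not quite how the finiteness of $E(k_\infty)[\ell^\infty]$ is usually handled: in Greenberg's exposition it is deduced for the cyclotomic $\Z_\ell$-extension directly (using that the extension is unramified outside $\ell$ and the good ordinary hypothesis at $\ell$), while Imai's theorem is the stronger statement over $k(\mu_{\ell^\infty})$ and holds without a CM dichotomy. Neither issue affects the correctness of the outline.
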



\subsection{Iwasawa modules attached to elliptic curves} The group $\Sel_{\ell^\infty}(E/k_\infty)$ has a natural $\Lambda_k$-module structure and it is topologically discrete. Giving $\Q_{\ell}/\Z_{\ell}$ the trivial $\Gamma_k$-action, we have an induced $\Lambda_k$-module structure on
$$
X(E/k_\infty):=\Hom_{\mathrm{cont}} (\Sel_{\ell^\infty}(E/k_\infty), \Q_{\ell}/\Z_{\ell}).
$$
By Pontryagin duality, $X(E/k_\infty)$ is compact. It is a standard result that $X(E/k_\infty)$ is finitely generated as a $\Lambda_k$-module.

\begin{conjecture}[Mazur] If $E$ has good ordinary reduction at each prime of $k$ above $\ell$, then $X(E/k_\infty)$ is a torsion $\Lambda_k$-module. 
\end{conjecture}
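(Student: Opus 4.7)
This is Mazur's original conjecture from the 1970s and is not known in full generality; it is, however, classical for $k=\Q$ (Kato's theorem), and that is the proof I would aim to reproduce. The first step is to use the modularity of $E/\Q$ to construct the Beilinson--Kato Euler system in Galois cohomology of $T_\ell E$ along the cyclotomic tower. Concretely, Beilinson elements in $K_2$ of the modular curve $X_0(N)$, pushed along the modular parametrization $X_0(N)\to E$, give a norm-compatible system of classes
\[
c_m\in H^1(\Q(\mu_{\ell^m}),T_\ell E),
\]
and Kato's explicit reciprocity law matches their image under a dual exponential map with the $\ell$-adic $L$-function $L_\ell(E)\in\Lambda_\Q$.

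The second step is to run the Kolyvagin--Rubin Euler system machine to bound the Pontryagin dual Selmer group $X(E/\Q_\infty)$ along the cyclotomic $\Z_\ell$-extension. The conclusion is that $L_\ell(E)$ lies in the characteristic ideal of $X(E/\Q_\infty)$, and in particular $L_\ell(E)$ annihilates $X(E/\Q_\infty)$ modulo $\Lambda_\Q$-torsion. To deduce that $X(E/\Q_\infty)$ is itself $\Lambda_\Q$-torsion it therefore suffices to check that $L_\ell(E)\ne 0$ in $\Lambda_\Q$, and this follows from the interpolation formula for $L_\ell(E)$ combined with Rohrlich's non-vanishing theorem for cyclotomic twists of modular $L$-functions at the central point. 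Packaged together, these inputs give the conjecture for $k=\Q$.

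The main obstacle is the extension to general $k$: the Beilinson--Kato construction is intrinsically tied to $\Q$-rational points of modular curves, and no comparable Euler system is known over an arbitrary number field. For $k/\Q$ abelian one still recovers the conjecture by combining Kato's theorem at the base with Mazur's control theorem quoted above, descending the torsion property from $\Q_\infty$ to $k_\infty$ after keeping track of the finite-index discrepancies. For the non-abelian fields $K=\Q(\sqrt[3]{p},\sqrt{-q})$ relevant to the present paper, however, the conjecture is genuinely open; in the arguments that follow one should therefore invoke it only in cases where it is known, presumably by passing first to a suitable abelian subfield over which Kato's theorem applies and then transferring information by a control-theorem argument.
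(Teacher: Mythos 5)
You correctly recognize that this statement is an open conjecture and not something that can be proved in generality; the paper agrees, stating explicitly that ``the general case of this conjecture remains open.'' There is thus no proof in the paper to compare against. Your sketch of the $k=\Q$ case via Kato's Euler system, Rohrlich's nonvanishing, and the Kolyvagin--Rubin bound is an accurate account of the best-known partial result.

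Where your guess about how the paper sidesteps the conjecture diverges from what actually happens: you suggest passing to an abelian subfield, invoking Kato there, and transferring torsion up by a control-theorem argument. The paper instead uses the much more elementary Theorem~\ref{ThmMazurFin}: if $E$ has good ordinary reduction above $\ell$ \emph{and} $\Sel_{\ell^\infty}(E/k)$ is already finite, then $X(E/k_\infty)$ is $\Lambda_k$-torsion. This is a direct corollary of Mazur's control theorem with no Euler system input. The paper then verifies the finiteness hypothesis explicitly in its applications: for $E=557b1$ and $k=\Q(\mu_3)$, the $3$-Selmer groups of $E$ and of its twist $E^{-3}$ over $\Q$ are computed to be trivial by a $3$-descent in Magma, and Proposition~\ref{PropBSD} (base-change compatibilities for BSD) then yields finiteness of $\Sel_{3^\infty}(E/\Q(\mu_3))$. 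So the paper's workaround is computational Selmer finiteness, not Kato over an abelian subfield. Your broader caution --- that the full conjecture should only be invoked where known --- is exactly the posture the paper adopts.
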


The general case of this conjecture remains open. For us it suffices to know the following result, which is a consequence of Mazur's control theorem.
\begin{theorem}[Mazur] \label{ThmMazurFin} If $E$ has good ordinary reduction at each prime of $k$ above $\ell$, and if $\Sel_{\ell^\infty}(E/k)$ is finite, then $X(E/k_\infty)$ is a torsion $\Lambda_k$-module. 
\end{theorem}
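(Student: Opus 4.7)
The plan is to translate the hypothesis on $\Sel_{\ell^\infty}(E/k)$ into a finiteness statement for the $\Gamma_k$-coinvariants of $X=X(E/k_\infty)$, and then invoke the structure theorem from Paragraph~\ref{ParAlg} to conclude that $X$ must be $\Lambda_k$-torsion.

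First I would apply Mazur's control theorem at level $m=0$. Since $\mathrm{Gal}(k_\infty/k)=\Gamma_k$, the natural map
$$
\Sel_{\ell^\infty}(E/k)\longrightarrow \Sel_{\ell^\infty}(E/k_\infty)^{\Gamma_k}
$$
has finite kernel and cokernel. Because $\Sel_{\ell^\infty}(E/k)$ is finite by hypothesis, the group $\Sel_{\ell^\infty}(E/k_\infty)^{\Gamma_k}$ is finite as well. Pontryagin duality exchanges $\Gamma_k$-invariants of the discrete module $\Sel_{\ell^\infty}(E/k_\infty)$ with $\Gamma_k$-coinvariants of its compact dual $X$. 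Therefore $X_{\Gamma_k}=X/(\gamma-1)X$ is finite, where $\gamma$ is a chosen topological generator of $\Gamma_k$. Transporting along the isomorphism $\Psi_{k,\gamma}:\Lambda_k\to\Z_\ell[[T]]$ of Paragraph~\ref{ParGal}, this says exactly that $X/TX$ is finite as a $\Z_\ell$-module, where $X$ is now regarded as a finitely generated $\Z_\ell[[T]]$-module.

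Next I would deduce from this the $\Lambda_k$-torsion property. Suppose, for contradiction, that $X$ has positive $\Lambda_k$-rank $r\ge 1$. By the structure theorem recalled in Paragraph~\ref{ParAlg}, there is a $\Lambda_k$-morphism
$$
X\longrightarrow \Lambda_k^{r}\oplus\Bigl(\bigoplus_i \Z_\ell[[T]]/(\ell^{\mu_i})\oplus \bigoplus_j \Z_\ell[[T]]/(f_j^{m_j})\Bigr)
$$
with finite kernel and cokernel. Reducing modulo $T$, the right-hand side has $\Z_\ell$-rank at least $r$ (from the free summand $\Lambda_k^r/T\Lambda_k^r\simeq\Z_\ell^r$), so its reduction mod $T$ is infinite. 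A short diagram chase, using that the kernel and cokernel of the pseudo-isomorphism are finite and that $T$-torsion in the cokernel only contributes finitely, shows that $X/TX$ is then also infinite, contradicting the previous paragraph. Hence $r=0$, i.e.\ $X$ is $\Lambda_k$-torsion.

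The main step to be careful about is the passage from \textquotedblleft $X/TX$ finite\textquotedblright\ to \textquotedblleft $X$ is $\Lambda_k$-torsion\textquotedblright: one must verify that finite kernels/cokernels in a pseudo-isomorphism cannot obscure the $\Z_\ell$-rank contributed by a free summand after reduction mod $T$. Once that elementary bookkeeping is done, the theorem follows. The use of the hypothesis that $E$ has good ordinary reduction at each prime of $k$ above $\ell$ enters only through Mazur's control theorem, which is what legitimates the first step.
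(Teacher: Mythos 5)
Your proof is correct. Note, however, that the paper does not actually prove this theorem: it is stated as a known result of Mazur, described in the text only as ``a consequence of Mazur's control theorem,'' with references to \cite{MazurIwa} and \cite{Greenberg}. Your argument --- control theorem at level $m=0$ to get $\Sel_{\ell^\infty}(E/k_\infty)^{\Gamma_k}$ finite, Pontryagin duality to convert this into finiteness of $X/TX$, and then the structure theorem for finitely generated $\Lambda$-modules (with the diagram chase across the pseudo-isomorphism to rule out a free summand $\Lambda^r$, $r\ge 1$) --- is exactly the standard derivation those sources give, so there is nothing to compare against in the paper itself beyond confirming that your route is the intended one.
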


\subsection{Iwasawa invariants}
Suppose that $X(E/k_\infty)$ is a torsion $\Lambda_k$-module and fix a topological generator $\gamma\in \Gamma_k$. Then we can regard $X(E/k_\infty)$ as a finitely generated torsion $\Z_{\ell}[[T]]$-module via $\Psi_{k,\gamma}$ (cf.\ Paragraph \ref{ParGal}), and we have the associated invariants  $\mu_{E/k}:=\mu_{X(E/k_\infty)}$, $\lambda_{E/k}:=\lambda_{X(E/k_\infty)}$,  and $f_{E/k}:=f_{X(E/k_\infty)}$. The $\mu$ and $\lambda$ invariants are well-defined integers that only depend on $\ell$ and $E/k$. However, the characteristic polynomial $f_{E/k}\in\Z_\ell[T]$ also depends on the choice of topological generator $\gamma\in\Gamma_k$. 

Directly from the definitions one has
\begin{lemma}\label{Lemmalm} Suppose that $X(E/k_\infty)$ is a torsion $\Lambda_k$-module. Then:
\begin{itemize}
\item $\mu_{E/k}=0$ if and only if $X(E/k_\infty)$ is finitely generated as a $\Z_{\ell}$-module,
\item $\mu_{E/k}=\lambda_{E/k}=0$ if and only if $X(E/k_\infty)$ is a finite group.
\end{itemize}
\end{lemma}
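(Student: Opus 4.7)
The plan is to reduce both equivalences to the canonical form of the structure theorem from Paragraph~\ref{ParAlg}. Write $X = X(E/k_\infty)$ and fix a pseudo-isomorphism
$$
\varphi \colon X \longrightarrow Y := \bigoplus_{i}\Z_{\ell}[[T]]/(\ell^{\mu_i})\oplus \bigoplus_j \Z_{\ell}[[T]]/(f_j^{m_j}).
$$
Since $\ker\varphi$ and $\coker\varphi$ are finite, $X$ is finitely generated as a $\Z_{\ell}$-module if and only if $Y$ is, and $X$ is finite if and only if $Y$ is. Thus both bullets will follow once I understand $Y$ as a $\Z_\ell$-module.

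Next I would inspect each summand of $Y$ separately. For $\mu_i \geq 1$, the summand $\Z_{\ell}[[T]]/(\ell^{\mu_i}) \cong (\Z/\ell^{\mu_i}\Z)[[T]]$ is uncountable, so in particular not finitely generated over $\Z_\ell$. For the $f_j$ part, since each $f_j$ is distinguished (monic with non-leading coefficients in $\ell\Z_\ell$), the Weierstrass division algorithm identifies $\Z_{\ell}[[T]]/(f_j^{m_j})$ with the free $\Z_\ell$-module of polynomials of degree $< m_j\deg f_j$, which is free of finite $\Z_\ell$-rank equal to $m_j\deg f_j$.

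Combining these computations, $Y$ is finitely generated over $\Z_\ell$ if and only if every $\mu_i$ vanishes, i.e.\ $\mu_{E/k} = \sum_i \mu_i = 0$; this proves the first bullet. Assuming further $\mu_{E/k}=0$, the module $Y$ reduces to $\bigoplus_j \Z_\ell[[T]]/(f_j^{m_j})$, which is free over $\Z_\ell$ of rank $\sum_j m_j\deg f_j = \lambda_{E/k}$. Hence $Y$ is finite precisely when this rank is zero, i.e.\ $\lambda_{E/k}=0$, giving the second bullet.

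There is essentially no obstacle: the only point requiring care is the non-symmetry of pseudo-isomorphism in general, but this does not matter for us because the two properties under consideration (being finitely generated over $\Z_\ell$, and being finite) are both preserved under the passage through maps with finite kernel and cokernel. The rest is the routine Weierstrass-preparation description of $\Z_\ell[[T]]/(f^m)$.
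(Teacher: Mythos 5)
The paper offers no written proof (it says the lemma follows ``directly from the definitions''), so there is nothing to compare against; your argument is the expected one, reducing via the structure theorem, and it is essentially correct. There is, however, one small logical slip worth fixing. You justify that $\Z_{\ell}[[T]]/(\ell^{\mu_i})\cong(\Z/\ell^{\mu_i}\Z)[[T]]$ is not finitely generated over $\Z_\ell$ by saying it is \emph{uncountable}. That inference is invalid: $\Z_\ell$ itself is uncountable and is certainly finitely generated (even cyclic) over $\Z_\ell$, and more generally any finitely generated $\Z_\ell$-module has cardinality at most $2^{\aleph_0}$, which is exactly the cardinality of $(\Z/\ell^{\mu_i}\Z)[[T]]$. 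The correct reason is that this module is annihilated by $\ell^{\mu_i}$; a finitely generated $\Z_\ell$-module annihilated by a power of $\ell$ is necessarily finite, whereas $(\Z/\ell^{\mu_i}\Z)[[T]]$ is infinite (it contains the linearly independent powers $1,T,T^2,\dots$). Equivalently, reducing modulo $\ell$ yields $\F_\ell[[T]]$, which is infinite-dimensional over $\F_\ell$, so by Nakayama the module cannot be finitely generated over $\Z_\ell$. Everything else — the preservation of ``finitely generated over $\Z_\ell$'' and ``finite'' under pseudo-isomorphism, and the Weierstrass-division identification of $\Z_\ell[[T]]/(f_j^{m_j})$ as a free $\Z_\ell$-module of rank $m_j\deg f_j$ — is fine and gives both bullets.
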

It is quite common that the $\mu$-invariant vanishes, but this is not always the case. On the other hand, we will be mostly interested on the $\lambda$-invariant due to the following important consequence of Mazur's control theorem (cf.\ Theorem 1.9 in \cite{Greenberg}).

\begin{theorem} \label{ThmLambdaRk} Suppose that $E$ has good ordinary reduction at each prime of $k$ above $\ell$ and that $X(E/k_\infty)$ is a torsion $\Lambda_k$-module. Then for each $m\ge 0$ we have 
$$
\rk E(k_m)\le \rho_\ell(E/k_m)\le  \lambda_{E/k}.
$$
\end{theorem}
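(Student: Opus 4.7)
The plan is to prove the two displayed inequalities separately. The first one, $\rk E(k_m)\le \rho_\ell(E/k_m)$, follows directly from the exact sequence \eqref{EqnSha} applied over $k_m$: the term $E(k_m)\otimes_\Z \Q_\ell/\Z_\ell$ is a divisible group of $\Z_\ell$-corank equal to $\rk E(k_m)$, and it injects into $\Sel_{\ell^\infty}(E/k_m)$, whose corank is by definition $\rho_\ell(E/k_m)$.

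For the substantive inequality $\rho_\ell(E/k_m)\le \lambda_{E/k}$, the strategy is to translate $\rho_\ell(E/k_m)$ into a $\Z_\ell$-rank on the Iwasawa module side and then bound it using the structure theorem from Paragraph \ref{ParAlg}. Fix a topological generator $\gamma\in \Gamma_k$; then $\sigma_m:=\gamma^{\ell^m}$ topologically generates $\Gamma_m:=\Gal(k_\infty/k_m)$, and $\Psi_{k,\gamma}$ sends $\sigma_m-1$ to $\omega_m:=(1+T)^{\ell^m}-1\in \Z_\ell[[T]]$. Mazur's control theorem provides the natural map
$$
\Sel_{\ell^\infty}(E/k_m)\longrightarrow \Sel_{\ell^\infty}(E/k_\infty)^{\Gamma_m}
$$
with finite kernel and cokernel, so the two groups share the same $\Z_\ell$-corank. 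Pontryagin duality interchanges $\Gamma_m$-invariants on the discrete side with $\Gamma_m$-coinvariants on the compact side, and the latter are obtained by quotienting by $(\sigma_m-1) = \omega_m$. Consequently,
$$
\rho_\ell(E/k_m)=\rk_{\Z_\ell}\bigl(X(E/k_\infty)/\omega_m X(E/k_\infty)\bigr).
$$

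To bound the right-hand side, I would choose a pseudo-isomorphism from $X(E/k_\infty)$ to the standard form $Y=\bigoplus_i \Z_\ell[[T]]/(\ell^{\mu_i})\oplus \bigoplus_j \Z_\ell[[T]]/(f_j^{m_j})$. A short application of the snake lemma to multiplication by $\omega_m$ (using that the kernel and cokernel of a pseudo-isomorphism are finite, hence contribute $\Z_\ell$-rank zero on both sides) shows that $\rk_{\Z_\ell}(X(E/k_\infty)/\omega_m X(E/k_\infty))=\rk_{\Z_\ell}(Y/\omega_m Y)$. Then I would compute summand by summand: each $\Z_\ell[[T]]/(\ell^{\mu_i},\omega_m)$ is finite, because $\omega_m$ is a distinguished polynomial of degree $\ell^m$ and this quotient is isomorphic as an abelian group to $(\Z/\ell^{\mu_i}\Z)[T]/(\omega_m)$, contributing $\Z_\ell$-rank zero; and each $\Z_\ell[[T]]/(f_j^{m_j},\omega_m)$ is a $\Z_\ell$-quotient of $\Z_\ell[T]/(f_j^{m_j})$, hence has rank at most $m_j\deg f_j$. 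Summing and recalling $\lambda_{E/k}=\deg f_{E/k}=\sum_j m_j\deg f_j$ yields $\rho_\ell(E/k_m)\le \lambda_{E/k}$.

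The main obstacle is really absorbed into Mazur's control theorem, already stated in the previous subsection as a black box. Once that is in hand, the rest is a routine combination of Pontryagin duality and the algebraic structure theorem; the only slightly delicate point is the invariance of $\Z_\ell$-rank of $\omega_m$-cokernels under pseudo-isomorphism, which is handled via a short diagram chase using the finiteness of the kernel and cokernel.
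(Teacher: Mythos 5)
Your argument is correct and is essentially the standard proof; the paper itself does not supply one but cites it as Theorem 1.9 of Greenberg's survey, where the same reduction to Mazur's control theorem plus the structure theorem is used. The three key steps are all sound: (a) the first inequality from the corank computation in the exact sequence \eqref{EqnSha}; (b) the translation of $\rho_\ell(E/k_m)$ into $\rk_{\Z_\ell}\bigl(X(E/k_\infty)/\omega_m X(E/k_\infty)\bigr)$ via Mazur's control theorem and Pontryagin duality, with $\omega_m=(1+T)^{\ell^m}-1$ the image of $\gamma^{\ell^m}-1$; and (c) the invariance of this $\Z_\ell$-rank under pseudo-isomorphism followed by the term-by-term estimate on the elementary module $Y$, where the $\mu$-summands contribute finite groups (since $\omega_m$ is a distinguished polynomial of degree $\ell^m$, so $\Z_\ell[[T]]/(\omega_m)\cong\Z_\ell[T]/(\omega_m)$ is $\Z_\ell$-free of rank $\ell^m$) and each $\lambda$-summand is a quotient of the free $\Z_\ell$-module $\Z_\ell[T]/(f_j^{m_j})$ of rank $m_j\deg f_j$. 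Two small points worth noting for completeness: the pseudo-isomorphism comparison should be split through $X\twoheadrightarrow \mathrm{im}(\phi)\hookrightarrow Y$ and the snake lemma applied to each short exact sequence separately, which is presumably what you intend; and the good-ordinary hypothesis is genuinely used, since it is what makes Mazur's control theorem available.
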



\subsection{The value of $f_{E/k}(0)$} We now discuss the characteristic polynomial $f_{E/k}$ assuming, of course, that $X(E/k_\infty)$ is $\Lambda_k$-torsion and that a topological generator $\gamma\in\Gamma_k$ is chosen.
\begin{proposition}\label{Propf0}
Suppose that $E$ has good ordinary reduction at each prime of $k$ above $\ell$ and that $X(E/k_\infty)$ is a torsion $\Lambda_k$-module. We have that $\Sel_{\ell^\infty}(E/k)$ is finite if and only if $f_{E/k}(0)\ne 0$.
\end{proposition}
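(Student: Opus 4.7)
The plan is to chain together three equivalences:
\begin{enumerate}
\item $\Sel_{\ell^\infty}(E/k)$ is finite iff $\Sel_{\ell^\infty}(E/k_\infty)^{\Gamma_k}$ is finite;
\item $\Sel_{\ell^\infty}(E/k_\infty)^{\Gamma_k}$ is finite iff $X(E/k_\infty)/T\cdot X(E/k_\infty)$ is finite;
\item $X(E/k_\infty)/T\cdot X(E/k_\infty)$ is finite iff $f_{E/k}(0)\neq 0$.
\end{enumerate}

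For step (1), I would invoke Mazur's control theorem (applied with $m=0$, using the hypothesis of good ordinary reduction at primes above $\ell$): the natural comparison map $\Sel_{\ell^\infty}(E/k)\to \Sel_{\ell^\infty}(E/k_\infty)^{\Gamma_k}$ has finite kernel and cokernel, so the source is finite iff the target is. For step (2), I would use Pontryagin duality: $X(E/k_\infty)$ is by definition the Pontryagin dual of the discrete $\Lambda_k$-module $\Sel_{\ell^\infty}(E/k_\infty)$, and under duality $\Gamma_k$-invariants correspond to $\Gamma_k$-coinvariants. Via the isomorphism $\Psi_{k,\gamma}$ sending $\gamma\mapsto 1+T$, the action of $\gamma-1$ on $X(E/k_\infty)$ is the action of $T$, so the coinvariants are precisely $X(E/k_\infty)/T\cdot X(E/k_\infty)$; Pontryagin duality preserves finiteness of locally compact abelian groups.

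The substantive computation is step (3). Writing $X=X(E/k_\infty)$, the structure theorem from Paragraph \ref{ParAlg} provides a pseudo-isomorphism
\[
X\;\longrightarrow\; E\;:=\;\bigoplus_{i}\Z_{\ell}[[T]]/(\ell^{\mu_i})\;\oplus\;\bigoplus_j \Z_{\ell}[[T]]/(f_j^{m_j}).
\]
Breaking this into two short exact sequences $0\to K\to X\to Y\to 0$ and $0\to Y\to E\to C\to 0$ with $K,C$ finite, and applying the right-exact functor $-\otimes_{\Lambda_k}\Lambda_k/T$ together with the snake lemma, one checks that $X/TX$ is finite iff $Y/TY$ is finite iff $E/TE$ is finite (the finite groups $K, C$ contribute only finite pieces). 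On the other hand, one computes directly
\[
E/TE \;=\; \bigoplus_i \Z_{\ell}/\ell^{\mu_i}\;\oplus\;\bigoplus_j \Z_{\ell}/(f_j(0)^{m_j}),
\]
and the $\ell^{\mu_i}$ summands are always finite, while $\Z_\ell/(f_j(0)^{m_j})$ is finite iff $f_j(0)\neq 0$. Since each $f_j$ is monic irreducible with non-leading coefficients in $\ell\Z_\ell$, the only way $f_j(0)=0$ is $f_j=T$. Therefore $E/TE$ is finite iff $f_j(0)\neq 0$ for every $j$, which (since $f_{E/k}(0)=\ell^{\mu_{E/k}}\prod_j f_j(0)^{m_j}$) is equivalent to $f_{E/k}(0)\neq 0$.

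I expect the main technical point to be the pseudo-isomorphism argument in step (3): pseudo-isomorphism is not symmetric on arbitrary $\Lambda_k$-modules, so one must be careful to deduce the biconditional on finiteness of $X/TX$ versus $E/TE$ from the finiteness of the kernel and cokernel of the pseudo-isomorphism, rather than simply quoting an isomorphism. Once this is handled by the snake lemma bookkeeping described above, combining the three equivalences yields the proposition.
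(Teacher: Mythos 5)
Your three-step chain — Mazur's control theorem, Pontryagin duality exchanging $\Gamma_k$-invariants with $T$-coinvariants, and then the structure-theorem computation — is exactly the paper's argument; the paper compresses your step (3) to a one-line citation of the classification of finitely generated torsion $\Lambda_k$-modules, whereas you spell out the snake-lemma bookkeeping that makes finiteness of $X/TX$ transfer across the pseudo-isomorphism. Your worry about the asymmetry of pseudo-isomorphism is well placed but correctly resolved; the proof is sound.
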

\begin{proof} By Mazur's control theorem, $\Sel_{\ell^\infty}(E/k)$ is finite if and only if $\Sel_{\ell^\infty}(E/k_\infty)^{\Gamma_k}$ is finite. Choosing a topological generator $\gamma\in\Gamma_k$ and recalling that $\Psi_{k,\gamma}(\gamma)=1+T$, we see that the latter is equivalent to the finiteness of $X(E/k_\infty)/T\cdot X(E/k_\infty)$ by duality between $\Sel_{\ell^\infty}(E/k_\infty)$ and $X(E/k_\infty)$. Finally, we note that $X(E/k_\infty)/T\cdot X(E/k_\infty)$ is finite if and only if $T\nmid f_{E/k}$ (cf. Paragraph \ref{ParAlg}).
\end{proof}

It is of great interest to evaluate $f_{E/k}(0)\in \Z_{\ell}$ when it is non-zero. The precise value depends on the choice of topological generator $\gamma\in\Gamma_k$, but nevertheless the $\ell$-adic valuation of $f_{E/k}(0)$ turns out to depend only on $\ell$, the number field $k$, and the elliptic curve $E/k$. Before giving the formula, we need some notation.

Given a non-archimedean place $v$ of $k$, we write $\F_v$ for the residue field and $k_v$ for the completion of $k$ at $v$. If $E$ is an elliptic curve over $k$, the reduction of $E$ at $v$ is denoted by $\tilde{E}_v$ and its non-singular locus is $\tilde{E}_v^{ns}$. We write $E(k_v)_0$ for the subgroup of $E(k_v)$ consisting of local points whose reduction at $v$ is in $\tilde{E}_v^{ns}$. The Tamagawa factor at $v$ is defined by $c_v(E/k)=[E(k_v):E(k_v)_0]$, and the the product  $\prod_v c_v(E/k)$ over all non-archimedean places $v$ of $k$ is denoted by $\Tam(E/k)$. Let us write $a\sim_{\ell} b$ if $a,b\in \Q_{\ell}$ have the same $\ell$-adic valuation.

Note that the finiteness assumption on $\Sel_{\ell^\infty}(E/k)$  not only implies $f_{E/k}(0)\ne 0$ (cf.\ Theorem \ref{ThmMazurFin} and Proposition \ref{Propf0}), but also, it implies that both $\Sha(E/k)[\ell^{\infty}]$ and $E(k)$ are finite (cf. the exact sequence \eqref{EqnSha}). So the formula in the next result only involves non-zero finite numbers. See also \cite{PR} and \cite{Schneider}.
\begin{theorem}[cf.\ Theorem 4.1 in \cite{Greenberg}] \label{Thmf0} Suppose that $E$ has good ordinary reduction at each prime of $k$ above $\ell$ and that $\Sel_{\ell^\infty}(E/k)$ is finite. Then $f_{E/k}(0)\in\Z_\ell$ is non-zero and it satisfies
$$
f_{E/k}(0)\sim_{\ell}   \frac{\Tam(E/k)\cdot  \#\Sha(E/k)[\ell^{\infty}] }{\#E(k)^2} \cdot \prod_{v|\ell} \#\tilde{E}_v(\F_v)^2. 
$$
\end{theorem}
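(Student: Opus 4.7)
The plan is to identify $f_{E/k}(0)$ with the $\Gamma_k$-Euler characteristic of $\Sel_{\ell^\infty}(E/k_\infty)$, and then evaluate that Euler characteristic by combining Mazur's control theorem with a detailed local Tate-cohomology calculation. The proof is essentially an Euler characteristic formula for the Selmer group, with the local and global terms tracked explicitly.

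First, I would translate the left-hand side into a ratio of module invariants. Under the finiteness of $\Sel_{\ell^\infty}(E/k)$, Proposition~\ref{Propf0} gives $f_{E/k}(0)\ne 0$, so $T$ is coprime to the characteristic polynomial of $X:=X(E/k_\infty)$. Applying the structure theorem of Paragraph~\ref{ParAlg} and observing that multiplication by $T$ acts as a non-zero-divisor on each elementary divisor of the pseudo-decomposition (since none is divisible by $T$), a direct calculation yields
$$\frac{\#(X/TX)}{\#X[T]}\sim_\ell f_{E/k}(0).$$
Pontryagin duality, with $\gamma\mapsto 1+T$ turning $T$-coinvariants of $X$ into $\Gamma_k$-invariants on the dual, converts this into
$$f_{E/k}(0)\sim_\ell \frac{\#\Sel_{\ell^\infty}(E/k_\infty)^{\Gamma_k}}{\#\Sel_{\ell^\infty}(E/k_\infty)_{\Gamma_k}}.$$

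Next, I would analyze the two terms via Mazur's control theorem. Writing the Selmer groups as kernels of localization maps into products of local cohomology groups modulo Kummer images, the comparison of the defining sequences over $k_m$ and over $k_\infty$ yields a commutative diagram to which the snake lemma applies. Under finiteness, the exact sequence~\eqref{EqnSha} gives $\#\Sel_{\ell^\infty}(E/k)\sim_\ell \#\Sha(E/k)[\ell^\infty]$, which accounts for the Sha factor in the numerator. The remaining task is to evaluate the product of local contributions coming from the kernels and cokernels of the control maps at each place. At places $v\nmid\ell$ of good reduction the contribution is trivial. At places $v\nmid\ell$ of bad reduction, a standard Tate-duality calculation on $H^1$ of the unramified quotient recovers the $\ell$-part of the local Tamagawa factor $c_v(E/k)$, and taking the product over all such $v$ yields $\Tam(E/k)$. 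At $v\mid \ell$ with good ordinary reduction, the connected-\'etale decomposition of the $\ell$-divisible group $E[\ell^\infty]_{k_v}$ splits the local cohomology into the formal-group piece (cohomologically negligible once the Selmer local condition is incorporated) and an unramified \'etale quotient isomorphic to $\tilde{E}_v[\ell^\infty]$; its $\Gamma_v$-invariants and coinvariants each have order $\sim_\ell \#\tilde{E}_v(\F_v)$, contributing the square in the formula. Finally, the torsion factor $\#E(k)^2$ in the denominator appears from $E(k)[\ell^\infty]$ entering at both ends of the restriction-inflation sequence for the global Galois cohomology of $E[\ell^\infty]$.

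The main obstacle is the bookkeeping of the local contributions at primes $v\mid\ell$ of good ordinary reduction: one must carefully identify the Selmer local condition at $v$ with the subgroup coming from the formal group in the connected-\'etale filtration, and thread the snake lemma through the associated local Kummer sequences on both levels, verifying that invariants and coinvariants of $\Gamma_v$ acting on the unramified quotient each contribute an independent factor of $\#\tilde{E}_v(\F_v)$ rather than cancelling. Once this local accounting is done cleanly, assembling the global factors produces the claimed formula.
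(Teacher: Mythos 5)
The paper states this result only with the citation ``cf.\ Theorem 4.1 in \cite{Greenberg}'' and offers no proof at all, remarking explicitly that the proof ``relies on the theory of the cyclotomic Euler characteristic, which we don't review here.'' Your proposal is precisely that Euler-characteristic argument from Greenberg's Park City notes: reduce $f_{E/k}(0)$ to the ratio $\#\Sel_{\ell^\infty}(E/k_\infty)^{\Gamma_k}/\#\Sel_{\ell^\infty}(E/k_\infty)_{\Gamma_k}$ via the structure theorem and duality, then evaluate it through Mazur's control theorem, the snake lemma across the defining sequences, local Tate duality away from $\ell$, and the connected-\'etale decomposition at $v\mid\ell$ in the ordinary case; this matches the intended proof and the sources of each factor are correctly identified. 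One spot in your local accounting is imprecise: the \'etale quotient $\tilde{E}_v[\ell^\infty]\simeq\Q_\ell/\Z_\ell$ is divisible, so its $\Gamma_v$-coinvariants vanish and cannot supply the second factor of $\#\tilde{E}_v(\F_v)$; in Greenberg's derivation the square comes instead from two genuinely distinct terms in the snake-lemma diagram at $v\mid\ell$ (one from $H^0$ of the unramified local quotient, the other from the cokernel of the local restriction map, bounded via the local Euler characteristic formula and Tate duality), rather than from invariants and coinvariants of a single module.
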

The proof of Theorem \ref{Thmf0} relies on the theory of the cyclotomic Euler characteristic, which we don't review here.

 

\section{A general construction of integrally Diophantine extensions}\label{SecDioph}

\subsection{Integrally Diophantine extensions} \label{SubSecIntDioph} Given a  commutative unitary ring $A$ and a positive integer $n$, let us recall that a subset $S\subseteq A^n$ is \emph{Diophantine} over $A$ if for some $m\ge 0$ there are polynomials $F_1,\ldots, F_k \in A[x_1,...,x_n,y_1,...,y_m]$ such that
$$
S=\{{\bf a}\in A^n : \exists {\bf b}\in A^m \mbox{ such that for each }1\le j\le k \mbox{ we have }  F_j({\bf a},{\bf b})=0\}.
$$

An extension of number fields $K/F$ is \emph{integrally Diophantine} if $O_F$ is Diophantine in $O_K$.   It is a standard fact that if $K/F/L$ is a tower of number fields and both $F/L$ and $K/F$ are integrally Diophantine, then so is $K/L$. In particular, 
\begin{lemma}\label{LemmaTransfer} Let $K/F$ be an extension of number fields. If $K/F$ is integrally Diophantine and $\Z$ is Diophantine in $O_F$, then $\Z$ is Diophantine in  $O_K$.
\end{lemma}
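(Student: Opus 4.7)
The plan is to invoke the standard transitivity of Diophantine definability: if $A\subseteq B\subseteq C$ are commutative unitary rings with $A$ Diophantine in $B$ and $B$ Diophantine in $C$, then $A$ is Diophantine in $C$. Applied to $A=\Z$, $B=O_F$, $C=O_K$, this yields exactly the lemma, since the hypothesis that $K/F$ is integrally Diophantine is by definition the assertion that $O_F$ is Diophantine in $O_K$.

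To prove the transitivity, I would first fix polynomials $G_1,\ldots,G_r\in O_F[x,y_1,\ldots,y_s]$ witnessing that $\Z$ is Diophantine in $O_F$, and polynomials $H_1,\ldots,H_t\in O_K[x,z_1,\ldots,z_u]$ witnessing that $O_F$ is Diophantine in $O_K$. Because $O_F\subseteq O_K$, the coefficients of each $G_k$ automatically lie in $O_K$, so the $G_k$ make sense as polynomial equations over $O_K$.

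The key step is to relativize quantifiers: an existential $O_F$-quantifier can be simulated by an existential $O_K$-quantifier together with a copy of the $H_j$-equations certifying that the witness lies in $O_F$. Concretely, for $a\in O_K$ I would show that $a\in\Z$ if and only if there exist elements $y_1,\ldots,y_s\in O_K$ and auxiliary tuples $\vec z^{(0)},\vec z^{(1)},\ldots,\vec z^{(s)}\in O_K^u$ such that
\[
H_j(a,\vec z^{(0)})=0 \text{ for all } j, \qquad H_j(y_i,\vec z^{(i)})=0 \text{ for all } i,j, \qquad G_k(a,y_1,\ldots,y_s)=0 \text{ for all } k.
\]
The first two groups of equations force $a$ and each $y_i$ into $O_F$, after which the third group is precisely the defining condition for $a\in\Z$ inside $O_F$. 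This is a finite polynomial system over $O_K$ in finitely many variables, so it realizes $\Z$ as a Diophantine subset of $O_K$.

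There is no substantive obstacle: the argument is a purely formal manipulation of Diophantine definitions, and the only conceptual point is that existential quantifiers over a Diophantine subset can always be relativized at the cost of adding extra variables and extra defining equations.
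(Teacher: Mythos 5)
Your proof is correct and matches the intended argument. The paper does not spell out a proof, citing this as a consequence of the standard transitivity of Diophantine definability for towers of number fields; your quantifier-relativization argument, adjoining copies of the $H_j$-equations to certify that $a$ and each witness $y_i$ lie in $O_F$, is precisely that standard argument.
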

As discussed in the introduction, there are elliptic curve criteria developed by Poonen, Cornelissen-Pheidas-Zahidi, and Shlapentokh to prove that a given extension of number fields is integrally Diophantine. For our purposes, let us recall here Shlapentokh's criterion.

\begin{theorem}[Shlapentokh \cite{ShlE}] \label{ThmShl} Let $K/F$ be an extension of number fields. Suppose that there is an elliptic curve $E$ defined over $F$ such that $\rk E(K)=\rk E(F)>0$. Then $K/F$ is integrally Diophantine.
\end{theorem}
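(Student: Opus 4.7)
This is Shlapentokh's elliptic-curve criterion from \cite{ShlE}; I would follow the strategy there. The setup is clean: given an elliptic curve $E/F$ with $\rk E(K) = \rk E(F) > 0$, the quotient group $E(K)/E(F)$ is finite (equal ranks and torsion is finite), so there is an integer $M \geq 1$ with $[M]Q \in E(F)$ for every $Q \in E(K)$. Fix also a point $P \in E(F)$ of infinite order, which exists because $\rk E(F) > 0$.

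My plan would be to produce a Diophantine formula over $O_K$ whose solution set is exactly $O_F$. The first step exploits the fact that multiplication by $M$ on $E$ is given by explicit polynomial formulas in the affine Weierstrass coordinates. Consequently, the set
\[
S = \{[M]Q : Q \in E(K)\} \subseteq E(F)
\]
is (the $K$-locus of) a Diophantine subset of $E(K)$. Pulling back via the coordinate functions, or more precisely clearing denominators with respect to a fixed Weierstrass model over $O_F$, produces a Diophantine family of $O_F$-integers inside $O_K$. The second step is to combine this family with standard norm-form equations, which are known to give Diophantine access to $F$-rational elements among elements of $K$. The intersection of the elliptic information with the norm-form information cuts out precisely the $F$-integers among the $K$-integers.

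The subtle part, and what I expect to be the main obstacle, is the completeness direction: one must ensure that every $\alpha \in O_F$ genuinely lies in the Diophantine set we have constructed, rather than just a sparse subset of $O_F$. This is where the positivity of $\rk E(F)$ is truly used---the infinite cyclic group $\langle P \rangle \subseteq E(F)$ supplies enough arithmetic material, via the elliptic group law and the multiplication-by-$M$ map, to reach arbitrary elements of $O_F$ once combined with the norm equations at the various places of $F$. Shlapentokh's original argument carries out this last coverage step via a careful local-global analysis, and reproducing that step is precisely where I would expect to spend most of the effort.
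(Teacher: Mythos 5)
The paper does not prove Theorem~\ref{ThmShl}; it is quoted as a black box from Shlapentokh's paper \cite{ShlE} (with the related earlier criteria of Poonen \cite{Poonen} and Cornelissen--Pheidas--Zahidi \cite{CPZ}), and no argument is given. So there is no in-paper proof to compare against; I will assess your sketch against what \cite{ShlE} and \cite{Poonen} actually do.

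Your opening moves are the right ones: since $\rk E(K)=\rk E(F)$ and torsion is finite, $E(K)/E(F)$ is finite, so there is an $M\ge 1$ with $[M]E(K)\subseteq E(F)$, and the positive rank supplies a point $P\in E(F)$ of infinite order. These are exactly the two resources Shlapentokh isolates. But the sketch then loses the thread. The engine of the Poonen--Shlapentokh argument is not ``clear denominators to get a Diophantine family of $O_F$-integers and then intersect with norm forms.'' It is the arithmetic of \emph{elliptic divisibility sequences}: for a fixed Weierstrass model over $O_F$, the ideals $\mathfrak{b}_n$ occurring as denominators of $x(nP)$ satisfy strong divisibility and growth relations (roughly, $\mathfrak{b}_m\mid\mathfrak{b}_n$ when $m\mid n$, with a controlled rate of growth along multiples). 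These relations are what make the crucial sets Diophantine, and they are what replaces the Pell-equation/norm-form machinery used in the totally real and CM cases of Denef--Lipshitz. In the elliptic-curve method there is no ``standard norm-form equation giving Diophantine access to $F$-rational elements of $K$'' doing the heavy lifting; that phrase imports the wrong toolkit. The rank-stability hypothesis $\rk E(K)=\rk E(F)$ is used precisely so that the $K$-points which parametrize the divisibility sequence are, up to bounded index, $F$-points, and hence the sequence lives in $O_F$ and has the divisibility structure needed to carry out a ``weak vertical method'' definition of $O_F$ inside $O_K$.

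Concretely, the gap is this: nothing in your sketch explains \emph{why} the set you build from $\{[M]Q : Q\in E(K)\}$ should Diophantinely pin down $O_F$, as opposed to some thin or ill-behaved subset. The completeness step you flag as ``the main obstacle'' is not resolved by a local--global analysis of norm forms; it is resolved by the quantitative divisibility and size estimates for $\mathfrak{b}_n$ (heights of $nP$ grow quadratically, divisibility of $\mathfrak{b}_n$ by a fixed prime stabilizes in a predictable way). Without those estimates one cannot verify that arbitrary $\alpha\in O_F$ are captured. If you want to reconstruct Shlapentokh's proof, the place to start is Poonen's Lemma on the divisibility properties of $x(nP)$ and the observation that, because $[M]E(K)\subseteq E(F)$, the denominators of points of $E(K)$ are commensurable with those of points of $E(F)$.
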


This result can be applied on a case-by-case basis to concrete examples of number fields, but it is not known at present how to systematically apply it in general ---at least, not without assuming some conjecture on elliptic curves (cf. \cite{MazRubH10} and \cite{MurtyPasten}).
For instance, the number field $\Q(\sqrt[5]{2})$ is not covered by the results (i)-(v) quoted in the introduction. However, the elliptic curve $E$ of affine equation $y^2 + xy = x^3 - x^2 - x + 1$ (Cremona label $58a1$) satisfies
$$
\rk E(\Q(\sqrt[5]{2}))=\rk E(\Q)=1,
$$
as it can be readily checked on Sage. By Shlapentokh's theorem we get that $\Q(\sqrt[5]{2})/\Q$ is integrally Diophantine (in fact, already the results of Poonen \cite{Poonen} or Cornelissen-Pheidas-Zahidi \cite{CPZ} suffice here).  That is, Conjecture \ref{MainConj} holds for $K=\Q(\sqrt[5]{2})$.


\subsection{A general construction} The general case of Conjecture \ref{MainConj} ---and in fact, of Hilbert's tenth problem for rings of integers of number fields--- remains open, as discussed in the introduction. It is therefore of great importance to develop tools that permit to construct integrally Diophantine extensions. The next result, despite its simple proof, gives a general way to construct such extensions and to prove new cases of Conjecture \ref{MainConj}. 

\begin{proposition} \label{PropMain} Let $F/M$ and $L/M$ be extensions of number fields with $L/M$ quadratic. Let $K=F.L$ be the compositum of $F$ and $L$ over $M$.  Suppose that there is an elliptic curve $E$ over $M$ satisfying the conditions: 
\begin{itemize}
\item[(i)] $\rk E(F)=0$
\item[(ii)] $\rk E(L)>0$.
\end{itemize}
Then $K/F$ is integrally Diophantine.
\end{proposition}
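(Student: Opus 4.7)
The plan is to reduce to Shlapentokh's criterion (Theorem \ref{ThmShl}), which demands an elliptic curve defined over $F$ whose rank is positive and stays constant under base change to $K$. Since $E$ itself is defined over $M \subseteq F$, the natural candidate is the quadratic twist of $E$ (viewed over $F$) by the character cutting out $K/F$.

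First I would check that $K/F$ is a genuine quadratic extension. Since $L \subseteq K$, we have $E(L) \hookrightarrow E(K)$, so $\rk E(K) \ge \rk E(L) > 0 = \rk E(F)$, which forces $L \not\subseteq F$; hence $K = F.L$ is quadratic over $F$, say $K = F(\sqrt{d})$ with $d \in F^{\times}$.

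Next I would invoke the standard rank formula for quadratic twists: if $E^d/F$ denotes the twist of $E/F$ by $d$, then
\[
\rk E(K) \;=\; \rk E(F) + \rk E^d(F).
\]
Using $\rk E(F) = 0$ and $\rk E(K) \ge \rk E(L) > 0$, this immediately gives $\rk E^d(F) = \rk E(K) > 0$. Moreover, since $E$ and $E^d$ become isomorphic over $K$, we have $E^d(K) = E(K)$ as abstract groups, so $\rk E^d(K) = \rk E(K) = \rk E^d(F)$. Equivalently, applying the twist formula to $E^d$ in place of $E$ gives $\rk E^d(K) = \rk E^d(F) + \rk E(F) = \rk E^d(F)$.

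Thus the elliptic curve $E^d/F$ satisfies $\rk E^d(F) = \rk E^d(K) > 0$, and Theorem \ref{ThmShl} produces the desired Diophantine definition of $O_F$ in $O_K$. There is no real obstacle here: the only thing one must be careful about is the observation $L \not\subseteq F$ (to ensure the twist is nontrivial), which follows directly from the rank hypotheses (i)--(ii). Everything else is the textbook quadratic-twist decomposition applied to the quadratic extension $K/F$.
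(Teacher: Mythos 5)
Your proof is correct and follows essentially the same route as the paper: both arguments first deduce $L\not\subseteq F$ from the rank hypotheses so that $K/F$ is quadratic, then apply the quadratic-twist rank decomposition to show the twist $E^d$ (the paper's $E^L$ restricted to $F$) has $\rk E^d(F)=\rk E^d(K)>0$, and finally invoke Shlapentokh's criterion.
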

\begin{proof}
Let $E^L$ be the elliptic curve over $M$ defined as the quadratic twist of $E$ by $L$.  By (i) and (ii) we see that $L$ is not contained in $F$, so the extension $K/F$ is quadratic and
$$
\rk E^L(K)=\rk E(F)+ \rk E^L(F)=\rk E^L(F).
$$ 
Since $\rk E^L(K) =\rk E(K)\ge \rk E(L)>0$ we can apply Theorem \ref{ThmShl} to the extension $K/F$ with the elliptic curve $E^L$ over $F$.
\end{proof}

As an example of how this construction readily leads to new cases of Conjecture \ref{MainConj}, let us point out the following simple consequence.
\begin{proposition}\label{PropEasy} Let $p$ be a prime of the form $p\equiv 3\bmod 4$. Then $\Z$ is Diophantine in the ring of integers of $\Q(\sqrt[5]{2}, \sqrt{p})$.
\end{proposition}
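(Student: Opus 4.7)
The plan is to combine Proposition~\ref{PropMain} and Lemma~\ref{LemmaTransfer}. Specifically, I apply Proposition~\ref{PropMain} with $M=\Q$, $F=\Q(\sqrt[5]{2})$, $L=\Q(\sqrt{p})$, and $K=F\cdot L=\Q(\sqrt[5]{2},\sqrt{p})$ to conclude that $K/F$ is integrally Diophantine. Combined with the fact (noted in Paragraph~\ref{SubSecIntDioph} via the elliptic curve $58a1$) that $\Z$ is Diophantine in $O_F$, and with Lemma~\ref{LemmaTransfer}, this gives $\Z$ Diophantine in $O_K$, as desired.

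The whole content of the proof thus reduces to exhibiting, for each prime $p\equiv 3\pmod 4$, an elliptic curve $E/\Q$ (allowed to depend on $p$) such that $\rk E(\Q(\sqrt[5]{2}))=0$ and $\rk E(\Q(\sqrt{p}))>0$. My strategy is to take $E$ to be a suitable quadratic twist of a CM elliptic curve $E_0/\Q$ of rank zero, for instance the congruent number curve $E_0:y^2=x^3-x$. Condition~(ii) then decomposes via $\rk E(\Q(\sqrt{p}))=\rk E(\Q)+\rk E^{(p)}(\Q)$ into a question about $\Q$-rational points on a twisted congruent-number curve, to which classical Heegner--Birch--Stephens constructions apply: these produce a non-torsion rational point on $E$ or on its twist by $p$ provided the corresponding root number is $-1$ and the Heegner hypothesis holds for the auxiliary imaginary quadratic field $\Q(\sqrt{-p})$ or $\Q(\sqrt{-2p})$, which is where the assumption $p\equiv 3\pmod 4$ enters. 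I expect it to be necessary to subdivide into the subcases $p\equiv 3\pmod 8$ and $p\equiv 7\pmod 8$ and to use different base curves in each, since the congruent number problem behaves differently in those two regimes.

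Condition~(i) is where I expect the main difficulty to lie. It asserts rank non-growth in the odd prime-degree non-Galois extension $\Q(\sqrt[5]{2})/\Q$. Passing to the Galois closure $\widetilde F=\Q(\sqrt[5]{2},\zeta_5)$ with $\mathrm{Gal}(\widetilde F/\Q)\cong F_{20}$, Artin formalism identifies $\rk E(\Q(\sqrt[5]{2}))-\rk E(\Q)$ with the multiplicity of the unique $4$-dimensional irreducible representation of $F_{20}$ in $E(\widetilde F)\otimes\Q$; for CM curves this translates into a non-vanishing statement for a Hecke $L$-value of a character of the CM field. Since the usual root-number/parity arguments are inaccessible in this non-Galois odd-degree setting, establishing this non-vanishing uniformly over the family of twists parametrized by $p\equiv 3\pmod 4$ is the chief obstacle. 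A fallback, in the spirit of the rest of the paper, would be to invoke the Iwasawa-theoretic bound $\rk E(F)\le\lambda_{E/F}$ from Theorem~\ref{ThmLambdaRk} at an auxiliary prime $\ell$ of good ordinary reduction for $E$, and to verify $\lambda_{E/F}=0$ by explicit computation of $f_{E/F}(0)$ via Proposition~\ref{Propf0} and Theorem~\ref{Thmf0}.
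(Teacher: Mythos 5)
Your skeleton is right: apply Proposition~\ref{PropMain} with $M=\Q$, $F=\Q(\sqrt[5]{2})$, $L=\Q(\sqrt{p})$, then combine with the worked example $\Q(\sqrt[5]{2})/\Q$ via Lemma~\ref{LemmaTransfer}. You also correctly land on the congruent-number family as the source of the auxiliary curve. But you have misdiagnosed where the work is, and as a result your proposal leaves a genuine gap precisely at the step you call ``the chief obstacle.''

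The key point you are missing is that one should use a \emph{single fixed} elliptic curve $E$, independent of $p$. The paper takes $E\colon y^2=x^3-4x$, the quadratic twist by $2$ of the congruent-number curve $y^2=x^3-x$. With $E$ fixed, condition~(i) of Proposition~\ref{PropMain}, namely $\rk E(\Q(\sqrt[5]{2}))=0$, is not a uniform non-vanishing statement over a family at all: it is \emph{one} rank computation over \emph{one} number field, which can be (and in the paper is) done directly on Sage. All your worries about Artin formalism over the $F_{20}$ Galois closure, Hecke $L$-values of CM characters, root numbers in odd non-Galois degree, and the Iwasawa-theoretic fallback are machinery aimed at a problem that does not arise. (And the Iwasawa-theoretic fallback would not straightforwardly apply anyway: Theorem~\ref{ThmHM} is stated for $\ell$-power Galois extensions, and $\Q(\sqrt[5]{2})/\Q$ is neither Galois nor prime-power degree in the sense used there, so you would at least have to pass to $\Q(\mu_5,\sqrt[5]{2})$ and redo the bookkeeping.)

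Conversely, the only part of the argument that must vary with $p$ is condition~(ii), $\rk E(\Q(\sqrt{p}))>0$, and here the classical congruent-number results already give a clean, uniform statement: Heegner, Birch, and Monsky show that $2p$ is a congruent number for \emph{every} prime $p\equiv 3\bmod 4$. The twist of $E\colon y^2=x^3-4x$ by $p$ is $y^2=x^3-(2p)^2x$, which therefore has positive rank over $\Q$, so $\rk E(\Q(\sqrt{p}))=\rk E(\Q)+\rk E^{(p)}(\Q)>0$. You do not need to subdivide into $p\equiv 3\bmod 8$ and $p\equiv 7\bmod 8$ at the level of your proof, nor to switch base curves; that case analysis lives inside the cited congruent-number theorems, whose conclusion is uniform. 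As written, your proposal leaves condition~(i) unresolved, so it is not a complete proof; fixing $E=y^2=x^3-4x$ once and for all and quoting the Sage computation closes the gap.
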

\begin{proof} We apply Proposition \ref{PropMain} with $M=\Q$, $F=\Q(\sqrt[5]{2})$, and $L=\Q(\sqrt{p})$ for $p\equiv 3\bmod 4$ a prime number. Consider the elliptic curve $E$ over $\Q$ with affine equation $y^2=x^3-4x$; this is the twist by $2$ of the celebrated Congruent Number elliptic curve $y^2=x^3-x$. By classical results due to Heegner \cite{Heegner} and Birch \cite{Birch} (see also \cite{Monsky}) we have that $2p$ is a congruent number when $p\equiv 3\bmod 4$. Hence $\rk E(\Q(\sqrt{p}))>0$ for these primes $p$. On the other hand, a direct computation on Sage shows that $\rk E(\Q(\sqrt[5]{2}))=0$. Therefore $K=\Q(\sqrt[5]{2}, \sqrt{p})$ is integrally Diophantine over $F=\Q(\sqrt[5]{2})$. In Paragraph \ref{SubSecIntDioph} we already checked that $\Q(\sqrt[5]{2})/\Q$ is integrally Diophantine, hence $K/\Q$ also is (cf.\ Lemma \ref{LemmaTransfer}). \end{proof}

In Proposition \ref{PropEasy}, all the number fields that we obtain are quadratic extensions of a single number field ---namely, of $\Q(\sqrt[5]{2})$--- and of course one can obtain many other results of this sort thanks to Proposition \ref{PropMain}. Theorem \ref{ThmMainIntro} instead is much more delicate and the proof lies deeper, as it concerns number fields of the form $\Q(\sqrt[3]{p},\sqrt{-q})$ where the two parameters $p$ and $q$ can be chosen independently of each other.


\subsection{Comparison with other available results}\label{SubsecComparison}
 
The following elementary lemma will help us to check that the families of number fields $K$ for which we prove that $\Z$ is Diophantine in $O_K$ are indeed new.
\begin{lemma} \label{LemmaNew} Let $\ell>2$ be a prime and let $F/\Q$ be a degree $\ell$ extension which is not totally real.  Then $F$ is not contained in a quadratic extension of a totally real number field.
\end{lemma}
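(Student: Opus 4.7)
I will prove this by contradiction, using a Galois-theoretic analysis inside the Galois closure $\tilde F$ of $F/\Q$. A preliminary observation is that $F$ cannot be Galois over $\Q$: otherwise the restriction to $F$ of any complex conjugation would be an element of $\Gal(F/\Q)\simeq C_\ell$ of order dividing $2$, hence trivial, making $F$ totally real.

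Suppose $F\subseteq E$ with $E/L$ quadratic and $L$ totally real. Since $[F:\Q]=\ell$ is prime, $F\cap L\in\{\Q,F\}$; the case $F\cap L=F$ would force $F\subseteq L$, totally real. Thus $F\cap L=\Q$, and combined with $[E:L]=2$ this gives $FL=E$. Let $\sigma$ generate $\Gal(E/L)$ and set $F':=\sigma(F)$. Since $|\Aut(F/\Q)|=1$ (as $F$ is non-Galois of prime degree), we have $F'\neq F$; and as a $\Q$-conjugate of $F$, $F'\subseteq\tilde F$. Replacing $L$ by the totally real subfield $FF'\cap L$ and $E$ by $FF'$ (the restriction of $\sigma$ remains a nontrivial involution with fixed field $FF'\cap L$), I reduce to the minimal case $E=FF'\subseteq\tilde F$.

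Now let $G:=\Gal(\tilde F/\Q)$, acting transitively and faithfully on the set $\mathcal{C}$ of $\ell$ $\Q$-conjugate subfields of $F$ inside $\tilde F$. If $P_1,P_2\in\mathcal{C}$ correspond to $F$ and $F'$, then $\Gal(\tilde F/L)$ is exactly the setwise stabilizer of $\{P_1,P_2\}$ in $G$. Fix a complex conjugation $c\in G$; the hypothesis that $L$ is totally real is equivalent to all $G$-conjugates of $c$ lying in this stabilizer, which in turn is equivalent to $c$ lying in the kernel $K$ of the $G$-action on the orbit $\Delta:=G\cdot\{P_1,P_2\}\subseteq\binom{\mathcal{C}}{2}$.

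To finish, I show $K=\{e\}$, so that $c=e$ makes $\tilde F$ (hence $F$) totally real, contradicting the hypothesis. For $P\in\mathcal{C}$ let $d$ be the number of $P'$ with $\{P,P'\}\in\Delta$; by transitivity, $d$ does not depend on $P$. If $d\geq 2$, then any $k\in K$ must fix $P$ (since $k(P)$ lies in the intersection of two distinct pairs through $P$), so $K$ acts trivially on $\mathcal{C}$ and $K=\{e\}$ by faithfulness. If $d=1$, then the pairs in $\Delta$ partition $\mathcal{C}$ into $|\Delta|$ disjoint pairs, forcing $\ell=|\mathcal{C}|$ to be even---impossible. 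The main obstacle is carrying out the minimal-case reduction cleanly and identifying $K$ as this kernel-of-action; after that, the parity obstruction arising from $\ell$ being odd finishes the proof.
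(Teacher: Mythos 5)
Your proof is correct, and it follows a genuinely different route from the paper's. The paper's proof is a short linear-disjointness argument: it observes that $F\cap N=\Q$ (since $[F:\Q]=\ell$ is prime and $F\subseteq N$ would force $F$ to be totally real), asserts that $F$ and $N$ are therefore linearly disjoint over $\Q$, and concludes $[FN:N]=\ell>2=[H:N]$, contradicting $FN\subseteq H$. Your argument instead works inside the Galois closure $\tilde{F}$: after the reduction to $E=FF'\subseteq\tilde{F}$, you identify $\Gal(\tilde{F}/L)$ with the setwise stabilizer of the unordered pair $\{F,F'\}$ for the action of $G=\Gal(\tilde{F}/\Q)$ on the $\ell$ conjugate subfields of $F$, translate ``$L$ totally real'' into complex conjugation $c$ lying in the kernel $K$ of the induced $G$-action on the orbit $\Delta$ of $\{F,F'\}$, and then show $K=\{e\}$ by a valency argument (faithfulness on the conjugates when $d\geq 2$, and the parity of $\ell$ when $d=1$). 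Your approach buys transparency: it supplies the Galois-theoretic content that the paper's one-line disjointness assertion compresses, and it makes explicit exactly where the hypothesis $\ell>2$ is used (both in ruling out that $F/\Q$ is Galois and in the final parity obstruction). The paper's approach buys brevity. In the end both detect the same obstruction, but by quite different arguments.
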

\begin{proof}
Suppose that $H/N$ is a quadratic extension with $N$ a totally real number field, such that $F\subseteq H$. Observe that $N$ and $F$ are linearly disjoint over $\Q$, for otherwise an element $\gamma\in F-\Q$ would be conjugate to an element of $N$, but $F=\Q(\gamma)$ is not totally real. It follows that the compositum $F.N$ has degree $\ell$ over $N$, but this is not possible since $F.N\subseteq H$ and $[H:N]=2$.
\end{proof}
For instance, let us verify that the number fields in Proposition \ref{PropEasy} are not included in number fields  covered by the results in items (i), (ii), (iii) from the introduction: For a prime $p$, the number field $\Q(\sqrt[5]{2}, \sqrt{p})$ has degree $10$ over $\Q$ and it has $4$ complex places, hence $K$ is not contained in fields for which (ii) or (iii) apply. By Lemma \ref{LemmaNew}, $K$ is not contained in a quadratic extension of a totally real field, so (i) does not apply.

Regarding item (v), we note that the quadratic extensions $\Q(\sqrt[5]{2}, \sqrt{p})/\Q(\sqrt[5]{2})$ are obtained by adjoining the square root of a rational prime, as opposed to using unrestricted primes from $\Q(\sqrt[5]{2})$. So the method of proof in \cite{MazRubH10, MazRubDS} (cf.\ item (v) above) does not apply either, as the auxiliary primes required by the methods of \emph{loc.\ cit.} cannot be guaranteed to be chosen in $\Q$.

Finally, we remark that Lemma \ref{LemmaNew} allows us to check in a similar way that the number fields considered in our main result Theorem \ref{ThmMainIntro} are out of the scope of the available results in the literature. In fact, if $p$ and $q$ are prime numbers, the number field $\mathbb{Q}(\sqrt[3]{p},\sqrt{-q})$ has degree $6$ over $\mathbb{Q}$, it has three complex places, and it contains the  non-totally real number field $\mathbb{Q}(\sqrt[3]{p})$.


\section{Preserving rank zero: Iwasawa theory}

For a positive integer $n$, the set of complex $n$-th roots of $1$ is denoted by $\mu_n$. The purpose of this section is to prove the following result
\begin{theorem}\label{ThmRkZero} Let $\ell>2$ be a prime. Let $E$ be an elliptic curve over $\Q$ of conductor $N$ and assume the following:
\begin{enumerate}
\item \label{A} $E$ has good ordinary reduction at $\ell$
\item \label{B} the residual Galois representation $\rho_{E[\ell]}:G_\Q\to \Aut (E[\ell])$ is surjective
\item \label{C} $\rk E(\Q(\mu_\ell))=0$
\item \label{D} $\Sha(E/\Q(\mu_\ell))[\ell]=(0)$
\item \label{E} $\ell\nmid \Tam(E/\Q(\mu_\ell))\cdot  \#\tilde{E}_\ell(\F_\ell)$.
\end{enumerate}
Consider the set of prime numbers
$$
\Pcal(E,\ell)=\{p : p\nmid N\mbox{, }p \equiv 1\bmod \ell\mbox{, and }a_p(E)\not\equiv 2\bmod \ell\}.
$$
Then $\Pcal(E,\ell)$ is a Chebotarev set of primes of density 
$$
\delta(\Pcal(E,\ell))=\frac{\ell^2-\ell-1}{(\ell-1)(\ell^2-1)}>0
$$
and for every $\ell$-power free integer $a>1$ supported on $\Pcal(E,\ell)$ we have that $\Sel_{\ell^\infty}(E/\Q(\mu_\ell,\sqrt[\ell]{a}))$ is finite. In particular, for these integers $a$ we have $\rk E(\Q(\sqrt[\ell]{a}))=\rk E(\Q(\mu_\ell,\sqrt[\ell]{a}))=0$.
\end{theorem}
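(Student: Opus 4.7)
The plan is to set $k = \Q(\mu_\ell)$ and $K = k(\sqrt[\ell]{a})$, and to use Iwasawa theory to show that $X(E/K_\infty)$ is a finite $\Lambda_K$-module; Mazur's control theorem over $K$ will then yield the finiteness of $\Sel_{\ell^\infty}(E/K)$, whence $\rk E(K) = 0$ and a fortiori $\rk E(\Q(\sqrt[\ell]{a})) = 0$. The density statement for $\Pcal(E,\ell)$ is the routine Chebotarev computation: by (\ref{B}) we have $\Gal(\Q(E[\ell])/\Q) \cong \GL_2(\F_\ell)$, and the conditions on $p$ pick out $\Frob_p$ with $\det = 1$ and $\tr \ne 2$. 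The trace-$2$ elements of $\SL_2(\F_\ell)$ form $\{I\}$ together with one non-trivial unipotent conjugacy class of size $\ell^2-1$, so the favorable count is $\ell(\ell^2-1) - \ell^2$; dividing by $|\GL_2(\F_\ell)| = \ell(\ell-1)(\ell^2-1)$ gives the claimed density.

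The first main step is to show $\mu_{E/k} = \lambda_{E/k} = 0$. Because $\Sha(E/k)[\ell^\infty]$ is cofinitely generated over $\Z_\ell$, hypothesis (\ref{D}) forces $\Sha(E/k)[\ell^\infty] = 0$; together with (\ref{C}) and the exact sequence (\ref{EqnSha}) this gives $\Sel_{\ell^\infty}(E/k) = 0$. With (\ref{A}), Theorem \ref{ThmMazurFin} shows $X(E/k_\infty)$ is $\Lambda_k$-torsion, so Theorem \ref{Thmf0} applies. Since $\ell$ is totally ramified in $k$ with residue field $\F_\ell$ at the unique prime $v\mid\ell$, and since (\ref{B}) yields $E[\ell](k) = 0$ (as $\SL_2(\F_\ell)$ acts without fixed nonzero vectors on $E[\ell]$), hypothesis (\ref{E}) turns every factor on the right of Theorem \ref{Thmf0} into an $\ell$-adic unit. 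Hence $f_{E/k}(0)$ is an $\ell$-adic unit; but every distinguished factor $f_j$ in Paragraph \ref{ParAlg} satisfies $f_j(0) \in \ell\Z_\ell$, so the equation $v_\ell(f_{E/k}(0))=0$ forces both $\mu_{E/k}=0$ and the absence of any $f_j$, i.e.\ $\lambda_{E/k}=0$. By Lemma \ref{Lemmalm}, $X(E/k_\infty)$ is finite.

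The second main step transfers this to $K$. Since $a>1$ is $\ell$-power-free and coprime to $\ell$, we have $[K:k]=\ell$ and $K \cap k_\infty = k$ (for $K/k$ ramifies at primes above some $p\mid a\ne\ell$, whereas $k_\infty/k$ is unramified away from $\ell$), so $K_\infty/k_\infty$ is a cyclic degree-$\ell$ extension. I then appeal to the Hachimori-Matsuno analogue of Kida's formula for Selmer groups of elliptic curves: $E$ has good ordinary reduction at primes of $K$ above $\ell$ by (\ref{A}) and good reduction at primes of $k$ above $p\mid a$ (as $p\nmid N$), so no additive reduction occurs at any ramification locus of $K_\infty/k_\infty$. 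The formula then yields $\mu_{E/K} = \ell\cdot\mu_{E/k} = 0$ and
\begin{equation*}
\lambda_{E/K} \;=\; \ell\cdot\lambda_{E/k} \;+\; \sum_{w\in P_1}(e_w - 1) \;+\; 2\sum_{w\in P_2}(e_w - 1),
\end{equation*}
where the sums run over primes $w$ of $K_\infty$ \emph{not above} $\ell$ that are ramified in $K_\infty/k_\infty$, and $P_1$ (resp.\ $P_2$) consists of those $w$ at which $E$ has good reduction with $E(K_{\infty,w})[\ell]\ne 0$ (resp.\ split multiplicative reduction).

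The final step is to verify that the two sums vanish. Each such $w$ lies above some $p\mid a$ with $p\in\Pcal(E,\ell)$, where $E$ has good reduction, so $P_2=\emptyset$. The residue field of $K_{\infty,w}$ is the unramified $\Z_\ell$-extension of $\F_p$, namely $\bigcup_n \F_{p^{\ell^n}}$; since $p\ne\ell$ the formal group has no $\ell$-torsion, so $E(K_{\infty,w})[\ell]$ injects into $\tilde{E}(\bigcup_n\F_{p^{\ell^n}})[\ell]$. A nonzero element there would require $\Frob_p$ to have eigenvalue $1\in\overline{\F_\ell}$ on $E[\ell]$ (as $1$ is the only $\ell$-power root of unity in characteristic $\ell$); since $p\equiv 1\pmod\ell$, the characteristic polynomial modulo $\ell$ is $T^2 - a_p T + 1$, whose value at $T=1$ is $2-a_p$, nonzero mod $\ell$ by the defining condition of $\Pcal(E,\ell)$. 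Thus $P_1=\emptyset$, $\lambda_{E/K}=0$, and Lemma \ref{Lemmalm} gives that $X(E/K_\infty)$ is finite; Mazur's control theorem over $K$ then produces the finiteness of $\Sel_{\ell^\infty}(E/K)$. The hard part is the invocation of the Hachimori-Matsuno formula in a version that accommodates possible ramification of $K_\infty/k_\infty$ at primes above $\ell$ (which generically occurs, since $\Q_\ell(\mu_{\ell^\infty})/\Q_\ell$ is abelian whereas $\Q_\ell(\sqrt[\ell]{a})/\Q_\ell$ is typically not)---the good ordinary hypothesis at those primes is precisely what ensures they produce no contribution to $\lambda_{E/K}$.
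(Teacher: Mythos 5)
Your proof follows the same route as the paper: establish finiteness of $X(E/\Q(\mu_\ell)_\infty)$ via the Euler characteristic formula (Theorem \ref{Thmf0}), then transfer to $k'=\Q(\mu_\ell,\sqrt[\ell]{a})$ using the Hachimori--Matsuno analogue of Kida's formula, showing the correction terms vanish. The paper packages the second step into Proposition \ref{PropPreserveXfin} and the vanishing of the local terms into Lemma \ref{LemmaDD} plus Proposition \ref{PropCheb}, whereas you invoke Theorem \ref{ThmHM} directly and give a hands-on argument for the vanishing of $E(K_{\infty,w})[\ell]$ at primes $w\mid p\mid a$ via the characteristic polynomial of Frobenius (which is a clean, equivalent alternative to the Dokchitser--Dokchitser lemma). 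Your observation at the end, that the Hachimori--Matsuno formula must tolerate ramification above $\ell$, is a real and correct point: the formula's $\lambda$-correction sums run only over $w\nmid\ell$, and good ordinary reduction at $\ell$ is exactly the hypothesis that makes that possible.

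A few small slips that do not affect the argument. First, you swapped the labels $P_1$ and $P_2$ relative to Theorem \ref{ThmHM} (split multiplicative vs.\ good reduction with nontrivial $\ell$-torsion); since you show both sums vanish, nothing is lost. Second, ``\,$\mu_{E/K}=\ell\cdot\mu_{E/k}$\,'' is not the Hachimori--Matsuno formula for $\mu$; the correct statement is simply that $\mu_{E/k}=0$ forces $\mu_{E/K}=0$, which is all you use. Third, in the density count the trace-$2$ elements of $\SL_2(\F_\ell)$ for $\ell>2$ are the identity together with \emph{two} unipotent conjugacy classes, each of size $(\ell^2-1)/2$, not one class of size $\ell^2-1$; the total $\ell^2$ you obtain is nevertheless correct, matching the paper's count in Proposition \ref{PropCheb}.
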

For our applications on Hilbert's tenth problem we (crucially) need the case $\ell=3$. Nevertheless, this more general theorem can be of independent interest. We also remark that the various hypotheses in the statement are amenable for computations; a concrete example is presented in Section \ref{SecProof}. Condition \eqref{B} is convenient for showing that $\Pcal(E,\ell)$ has positive density, but it can certainly be relaxed ---however, it is enough for our purposes.

We remark that a related result is sketched as Theorem 18 in \cite{VladD} for primes $\ell\ge 5$ without verifying the existence of infinitely many integers $a$. For $\ell=3$, the special case of $E=X_1(11)$ is worked out in \cite{TimD} where the existence of infinitely many integers $a$ is shown by explicit computations with an equation for the modular curve $X_1(11)$.

Chao Li has pointed out to us another possible approach to achieve rank $0$ over cubic extensions, at least for a particular kind of elliptic curves. Namely, given a Mordell elliptic curve $E: y^2=x^3+k$ (i.e. an elliptic curve of $j$-invariant $0$) defined over $\Q$ with $\rk E(\Q)=0$, we have $\rk E(\Q(\sqrt[3]{p}))=0$ if $\rk E^{[p]}(\Q)=0$ and $\rk E^{[p^2]}(\Q)=0$, where $E^{[d]}:y^2=x^3+d^2k$ is the cubic twist by $d$. The theory of \cite{KrizLi} allows one to achieve $\rk E^{[p]}(\Q)=0$ for many primes $p$ under suitable conditions. For our applications it would be of great interest to extend this theory in order to control the two relevant cubic twists simultaneously. We point out that both cubic twists $E^{[p]}$ and $E^{[p^2]}$ are needed; for instance, for $E: y^2=x^3+1$ and $p=5$ we have $\rk E(\Q)=0$, $\rk E^{[5]}(\Q)=0$ but $\rk E(\Q(\sqrt[3]{5}))=1$.


\subsection{Variation of the $\lambda$-invariant} In view of Theorem \ref{ThmLambdaRk}, we will be interested in the variation of the $\lambda$-invariant of an elliptic curve under base change. An important result by Hachimori and Matsuno \cite{HaMa} gives a precise formula under suitable assumptions. Here we state a special case. As usual, for a number field $k$ the set of places of $k$ is denoted by $M_k$.

\begin{theorem}[Hachimori-Matsuno] \label{ThmHM} Let $\ell>2$ be a prime and let $k$ be a number field. Let $E$ be an elliptic curve over $k$ with good ordinary reduction at all primes of $k$ above $\ell$. Suppose that $X(E/k_\infty)$ is a torsion $\Lambda_k$-module and that $\mu_{E/k}=0$. 

Let $k'/k$ be an $\ell$-power Galois extension of number fields such that $E$ does not have additive reduction at the primes that ramify in $k'/k$. Then $\mu_{E/k'}=0$ and $X(E/k'_\infty)$ is a torsion $\Lambda_{k'}$-module. Furthermore, 
$$
\lambda_{E/k'}=[k'_\infty:k_\infty]\lambda_{E/k} + \sum_{w\in P_1} (e_{k'_\infty/k_\infty}(w)-1) + 2 \sum_{w\in P_2} (e_{k'_\infty/k_\infty}(w)-1) 
$$
where 
$$
\begin{aligned}
P_1&= \{ w\in M_{k'} : w\nmid \ell \mbox{ and } E \mbox{ has split multiplicative reduction at }w\}  \\
P_2&= \{ w\in M_{k'} : w\nmid \ell \mbox{, } E \mbox{ has good reduction at } w \mbox{, and }E(k'_{\infty,w})[\ell^\infty]\ne (0)\}
\end{aligned}
$$
and $e_{k'_\infty/k_\infty}(w)$ denotes the corresponding ramification index.
\end{theorem}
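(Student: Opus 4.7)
The plan is to reduce to the case $[k':k]=\ell$ and then exploit a control-type comparison of Selmer groups between $k_\infty$ and $k'_\infty$. Since $\Gal(k'/k)$ is a finite $\ell$-group, we filter $k'/k$ by a tower of cyclic degree-$\ell$ Galois subextensions; ramification indices multiply in towers, the no-additive-reduction hypothesis propagates to each intermediate layer, and the contributions from $P_1$ and $P_2$ add correctly across the tower (provided the conclusion $\mu=0$ is preserved, which is exactly what lets us iterate). Hence it suffices to prove the statement for $[k':k]=\ell$. If $k'\subset k_\infty$, then $k'_\infty=k_\infty$ and the statement is trivial, so we assume $k'\not\subset k_\infty$, making $G:=\Gal(k'_\infty/k_\infty)$ cyclic of order $\ell$.

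The heart of the argument is the analysis of the restriction map
$$
s \colon \Sel_{\ell^\infty}(E/k_\infty) \longrightarrow \Sel_{\ell^\infty}(E/k'_\infty)^{G}.
$$
Via the inflation-restriction sequence one bounds $\ker(s)$ by $H^1(G, E(k'_\infty)[\ell^\infty])$, while $\coker(s)$ fits into an exact sequence involving local cohomology terms $H^1(G_w, E(k'_{\infty,w}))$ at the places $w$ of $k'_\infty$. The finiteness of $E(k'_\infty)[\ell^\infty]$ (a theorem of Imai for cyclotomic $\Z_\ell$-extensions, using good ordinary reduction at $\ell$) forces $\ker(s)$ to be finite. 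Pontryagin dualizing, the induced map $X(E/k'_\infty)_G \to X(E/k_\infty)$ has finite kernel and cokernel. Since $\Lambda_{k'}$ is a free $\Lambda_k$-module of rank $[k'_\infty:k_\infty]$, a Nakayama-type argument shows that $X(E/k'_\infty)$ is $\Lambda_{k'}$-torsion with $\mu_{E/k'}=0$, and moreover that the ``main term'' of the $\Z_\ell$-corank contribution to $\lambda_{E/k'}$ is exactly $[k'_\infty:k_\infty]\lambda_{E/k}$.

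The extra summands in the formula come from a place-by-place analysis of the local cokernel in the computation of $\coker(s)$. At an unramified place, the local term is trivial. At a ramified place $w$ of split multiplicative reduction (with $w\nmid \ell$), Tate uniformization shows that $E(k'_{\infty,w})\otimes \Q_\ell/\Z_\ell$ has $\Z_\ell$-corank $1$ and that $s$ fails to cover it by $e_{k'_\infty/k_\infty}(w)-1$, producing the $P_1$ contribution. At a ramified place of good reduction with $E(k'_{\infty,w})[\ell^\infty]\ne 0$, the $\ell$-adic Tate module of the reduction (being of $\Z_\ell$-rank $2$) generates a local defect of $\Z_\ell$-corank $2(e_{k'_\infty/k_\infty}(w)-1)$, yielding the $P_2$ contribution with its factor of $2$. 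Non-split multiplicative reduction contributes $0$ (the relevant Frobenius eigenvalue is $-1\not\equiv 1 \bmod \ell$ for $\ell>2$), while additive reduction is excluded by hypothesis. Assembling the main term with these local contributions gives the claimed formula.

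The main obstacle is the precise local computation at ramified places, particularly distinguishing finite-order local terms (which only affect the $\mu$-invariant, which must vanish) from infinite-order terms (which actually modify $\lambda$). This requires tracking $E(k'_{\infty,w})\otimes \Q_\ell/\Z_\ell$ carefully for each reduction type, using the Tate parameter in the split multiplicative case and the formal group/\'etale decomposition at $w\nmid \ell$ in the good reduction case. The hypothesis $\ell>2$ enters crucially here, both to ensure $-1\not\equiv 1\bmod\ell$ in the non-split case and to control Herbrand quotients in the $G$-cohomology of local points. The no-additive-reduction hypothesis at ramified primes is equally essential: at an additive prime, the local cohomology is unruly and could contribute both to $\mu$ and to $\lambda$ in an uncontrolled way, destroying the clean formula.
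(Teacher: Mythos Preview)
The paper does not prove this theorem at all: it is stated as a special case of a result of Hachimori and Matsuno \cite{HaMa} and cited as a black box, with no argument given. There is therefore nothing in the paper to compare your proposal against.

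That said, your sketch is a reasonable outline of the actual argument in \cite{HaMa}: reduction to the cyclic degree-$\ell$ case, control of the restriction map $\Sel_{\ell^\infty}(E/k_\infty)\to \Sel_{\ell^\infty}(E/k'_\infty)^G$ via inflation--restriction and Imai's finiteness theorem, a Nakayama argument to get $\Lambda_{k'}$-torsion and $\mu_{E/k'}=0$, and a local computation of $H^1(G_w,E(k'_{\infty,w}))$ to identify the correction terms. One point that would need more care in a full proof is your description of the $P_2$ contribution: the factor of $2$ arises not literally from the $\Z_\ell$-rank of the Tate module of the reduction, but from the fact that when $E(k'_{\infty,w})[\ell^\infty]\ne 0$ and $w\nmid\ell$, one has $E(k'_{\infty,w})[\ell^\infty]\simeq(\Q_\ell/\Z_\ell)^2$ (since all $\ell$-power roots of unity lie in $k'_{\infty,w}$ and the Weil pairing then forces full $\ell$-divisible torsion), and the Herbrand quotient of this $G_w$-module produces the coefficient $2$. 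Similarly, the non-split multiplicative case needs the observation that the reduction type becomes split over $k_\infty$ only after an unramified quadratic extension, which is disjoint from the $\ell$-extension $k'_\infty/k_\infty$ since $\ell>2$.
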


The following lemma allows one to give a simple alternative description of the set of places $P_2$ of the previous theorem.
\begin{lemma}[Dokchitser-Dokchitser, cf.\ Lemma 3.19 (3) \cite{DD}] \label{LemmaDD} Let $\ell>2$ be a prime and let $k'/k$ be an $\ell$-power Galois extension of number fields. Let $E$ be an elliptic curve over $k$ with good reduction at a prime $v\nmid \ell$ of $k$ and let $w|v$ be a prime of $k'$. Then, $E(k'_{\infty,w})[\ell^\infty]= (0)$ if and only if $\tilde{E}_v(\F_v)[\ell]= (0)$.
\end{lemma}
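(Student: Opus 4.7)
The plan is to chain together three reductions, ending in an elementary observation about eigenvalues in characteristic $\ell$. Set $L=k'_{\infty,w}$ with residue field $\kappa_L$. First, because $E$ has good reduction at $v$ and $\mathrm{char}(\F_v)\ne\ell$, the formal group kernel of reduction $\hat E(\mathfrak m_L)$ is uniquely $\ell$-divisible with no $\ell^\infty$-torsion; the snake lemma applied to the short exact sequence $0\to \hat E(\mathfrak m_L)\to E(L)\to \tilde E_v(\kappa_L)\to 0$ at each $\ell^n$-torsion level yields $E(L)[\ell^n]\cong \tilde E_v(\kappa_L)[\ell^n]$. Since every nonzero $\ell^\infty$-torsion point has a nonzero $\ell$-torsion multiple, the claim reduces to the equivalence $\tilde E_v(\kappa_L)[\ell]=0 \iff \tilde E_v(\F_v)[\ell]=0$.

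Next I would verify that $\kappa_L/\F_v$ is a pro-$\ell$ extension. Because $[k':k]$ is an $\ell$-power, so is the local degree $[k'_w:k_v]$, and hence so is $[\F_w:\F_v]$. The cyclotomic $\Z_\ell$-extension $k'_\infty/k'$ is unramified outside primes above $\ell$, so as $w\nmid\ell$ the local extension $L/k'_w$ is unramified with decomposition group a closed subgroup of $\Z_\ell$ (trivial or $\cong\Z_\ell$), yielding a pro-$\ell$ residue extension $\kappa_L/\F_w$. Composing, $\kappa_L/\F_v$ is pro-$\ell$.

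For the key algebraic step, let $\phi\in \mathrm{GL}(\tilde E_v[\ell])\subseteq \mathrm{GL}_2(\F_\ell)$ denote the image of the $\F_v$-Frobenius, and write the order of $\phi$ as $d=\ell^s d'$ with $(d',\ell)=1$. The image $H$ of $\mathrm{Gal}(\bar\F_v/\kappa_L)$ in $\langle\phi\rangle$ satisfies $|\langle\phi\rangle/H|\mid \ell^s$, since this quotient is a pro-$\ell$ quotient of the finite cyclic group $\langle\phi\rangle$; thus $H\supseteq \langle\phi^{\ell^s}\rangle$ and
\begin{equation*}
\tilde E_v(\F_v)[\ell]\subseteq \tilde E_v(\kappa_L)[\ell]=\tilde E_v[\ell]^{H}\subseteq \ker(\phi^{\ell^s}-1).
\end{equation*}
The characteristic-$\ell$ identity $\phi^{\ell^s}-1=(\phi-1)^{\ell^s}$ in $M_2(\F_\ell)$ now shows that if $\tilde E_v(\F_v)[\ell]=\ker(\phi-1)=0$ then $\phi-1$ is invertible on the finite-dimensional $\tilde E_v[\ell]$, so $(\phi-1)^{\ell^s}$ is too, forcing $\tilde E_v(\kappa_L)[\ell]=0$. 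The reverse implication is automatic from the displayed inclusion.

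The only substantive point is making the last step work uniformly regardless of the precise pro-$\ell$ extension $\kappa_L/\F_v$ (in particular, accommodating the possibility that $L/k'_w$ is an infinite $\Z_\ell$-extension): the saving grace is the characteristic-$\ell$ collapse $(X-1)^{\ell^s}=X^{\ell^s}-1$, which makes the equivalence depend only on whether $1$ is an eigenvalue of $\phi$, rather than on the finer orbit structure of $\phi$ on $\tilde E_v[\ell]$.
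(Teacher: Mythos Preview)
Your argument is correct. The paper itself does not supply a proof of this lemma; it is quoted with attribution to Dokchitser--Dokchitser (Lemma~3.19(3) of \cite{DD}), so there is no in-paper argument to compare against.

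The three reductions you give are the natural ones and are cleanly executed. One small point worth making explicit in the first step: $L=k'_{\infty,w}$ need not be a complete field (when the decomposition group at $w$ in $\Gamma_{k'}$ is all of $\Z_\ell$, it is an infinite unramified extension of $k'_w$), so the exact sequence $0\to\hat E(\mathfrak m_L)\to E(L)\to \tilde E_v(\kappa_L)\to 0$ and the bijectivity of $[\ell]$ on $\hat E(\mathfrak m_L)$ should be justified by passing to the directed union of the complete intermediate fields, where Hensel's lemma and the invertibility of $[\ell]'(0)=\ell$ apply. You implicitly use this, and it causes no trouble. The Frobenius-eigenvalue step via the identity $(\phi-1)^{\ell^s}=\phi^{\ell^s}-1$ in $M_2(\F_\ell)$ is exactly the right device: it makes the conclusion depend only on whether $1$ is an eigenvalue of $\phi$ on $\tilde E_v[\ell]$, which is precisely the condition $\tilde E_v(\F_v)[\ell]\ne(0)$, and is insensitive to which pro-$\ell$ residue extension $\kappa_L/\F_v$ actually occurs.
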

Theorem \ref{ThmHM} leads to a simple criterion to ensure that, under favorable conditions, the finiteness of the Iwasawa module $X(E/k_\infty)$ is preserved in prime-power degree field extensions (see also Corollary 3.20 in \cite{DD}).
\begin{proposition}[Preserving finiteness of $X(E/k_\infty)$] \label{PropPreserveXfin} Let $\ell>2$ be a prime and let $k$ be a number field. Let $E$ be an elliptic curve over $k$ with good ordinary reduction at all primes of $k$ above $\ell$. Suppose that $X(E/k_\infty)$ is finite.

Let $k'/k$ be a $\ell$-power Galois extension of number fields satisfying the following conditions:
\begin{itemize}
\item[(i)] $E$ has good reduction at each prime $v$ of $k$ that ramifies in $k'/k$, and
\item[(ii)] for each prime $v\nmid \ell$ of $k$ that ramifies in $k'/k$, we have $\tilde{E}_v(\F_v)[\ell]=(0)$.
\end{itemize}
Then $X(E/k'_\infty)$ is finite and $\rk E(k')=0$.
\end{proposition}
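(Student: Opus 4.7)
The plan is to apply the Hachimori--Matsuno formula (Theorem \ref{ThmHM}) to show that both Iwasawa invariants $\mu_{E/k'}$ and $\lambda_{E/k'}$ vanish. By Lemma \ref{Lemmalm} this forces $X(E/k'_\infty)$ to be finite, and the rank statement then follows from Theorem \ref{ThmLambdaRk} together with the fact that good ordinary reduction is preserved under base change to $k'$.

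First I would verify that the hypotheses of Theorem \ref{ThmHM} apply to $E/k$ and the extension $k'/k$. Finiteness of $X(E/k_\infty)$ combined with Lemma \ref{Lemmalm} gives $\mu_{E/k}=\lambda_{E/k}=0$ and in particular $\Lambda_k$-torsionness of $X(E/k_\infty)$; condition (i) forbids additive reduction at primes of $k$ ramifying in $k'/k$ (good reduction is not additive); and $E$ has good ordinary reduction at primes of $k$ above $\ell$ by assumption. Hachimori--Matsuno then provides $\mu_{E/k'}=0$, the fact that $X(E/k'_\infty)$ is $\Lambda_{k'}$-torsion, and the formula
$$
\lambda_{E/k'} = [k'_\infty:k_\infty]\,\lambda_{E/k} + \sum_{w\in P_1}\bigl(e_{k'_\infty/k_\infty}(w)-1\bigr) + 2\sum_{w\in P_2}\bigl(e_{k'_\infty/k_\infty}(w)-1\bigr),
$$
whose first term vanishes because $\lambda_{E/k}=0$.

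The heart of the argument is to show that both sums vanish, and this is where I expect the main (though mild) obstacle to lie, namely in the bookkeeping around ramification. The key observation is that for a place $w\in M_{k'}$ with $w\nmid\ell$, the cyclotomic $\Z_\ell$-extensions $k_\infty/k$ and $k'_\infty/k'$ are unramified away from $\ell$, so by multiplicativity of ramification indices $e_{k'_\infty/k_\infty}(w)=e_{k'/k}(w|v)$, where $v$ denotes the prime of $k$ below $w$. Hence a nonzero contribution to either sum requires that $v$ ramify in $k'/k$. If $w\in P_1$ with $e_{k'_\infty/k_\infty}(w)>1$, then condition (i) forces good reduction at $v$ and hence at $w$, contradicting split multiplicative reduction at $w$. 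If $w\in P_2$ with $e_{k'_\infty/k_\infty}(w)>1$, then condition (ii) yields $\tilde{E}_v(\F_v)[\ell]=(0)$, and Lemma \ref{LemmaDD} gives $E(k'_{\infty,w})[\ell^\infty]=(0)$, contradicting $w\in P_2$. Thus $\lambda_{E/k'}=0$, Lemma \ref{Lemmalm} yields that $X(E/k'_\infty)$ is finite, and Theorem \ref{ThmLambdaRk} (applied to the elliptic curve $E$ base-changed to $k'$, which inherits good ordinary reduction at all primes above $\ell$) gives $\rk E(k')\le\lambda_{E/k'}=0$.
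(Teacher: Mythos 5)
Your proof is correct and follows essentially the same route as the paper: invoke Lemma \ref{Lemmalm} to get $\mu_{E/k}=\lambda_{E/k}=0$, apply the Hachimori--Matsuno formula (Theorem \ref{ThmHM}), and use conditions (i) and (ii) together with Lemma \ref{LemmaDD} to kill both correction terms, so that $\lambda_{E/k'}=0$ and the conclusion follows from Lemma \ref{Lemmalm} and Theorem \ref{ThmLambdaRk}. The only difference is that you spell out the ramification bookkeeping a bit more carefully (noting that $e_{k'_\infty/k_\infty}(w)=e_{k'/k}(w\mid v)$ for $w\nmid\ell$ because cyclotomic $\Z_\ell$-extensions are unramified away from $\ell$), which the paper states more tersely.
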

\begin{proof} Since $X(E/k_\infty)$ is finite, it is a torsion $\Lambda_k$-module and $\mu_{E/k}=\lambda_{E/k}=0$ (cf.\ Lemma \ref{Lemmalm}). By (i), we can apply Theorem \ref{ThmHM} and we obtain that $X(E/k'_\infty)$ is $\Lambda_{k'}$-torsion and $\mu_{E/k'}=0$. Furthermore, $e_{k'_\infty/k_\infty}(w)=1$ for each $w\in P_1$ by our assumption (i), and $e_{k'_\infty/k_\infty}(w)=1$ for each $w\in P_2$ by (ii) and Lemma \ref{LemmaDD}. Therefore $\lambda_{E/k'}=[k'_\infty : k_\infty]\lambda_{E/k} =0$.

Since $X(E/k'_\infty)$ is $\Lambda_{k'}$-torsion and $\mu_{E/k'}=\lambda_{E/k'}=0$, we obtain that $X(E/k'_\infty)$ is finite (cf.\ Lemma \ref{Lemmalm}). In particular, since $\lambda_{E/k'}=0$ we get $\rk E(k')=0$ (cf.\ Theorem \ref{ThmLambdaRk}).
\end{proof}

In view of Proposition \ref{PropPreserveXfin}, we need a test for finiteness of $X(E/k_\infty)$.


\subsection{Testing finiteness of $X(E/k_\infty)$}

The following test for finiteness of $X(E/k_\infty)$ follows from the theory of the cyclotomic Euler characteristic and it is well-known to experts.
\begin{proposition}[A test for finiteness of $X$] \label{PropTestFinX}Let $\ell$ be a prime and let $k$ be a number field. Let $E$ be an elliptic curve over $k$ with good ordinary reduction at all primes of $k$ above $\ell$. Assume that $E(k)$ is finite and that that $\Sha(E/k)[\ell^\infty]$ is also finite ---the latter happens,  for instance, if $\Sha(E/k)[\ell]=(0)$. Then $X(E/k_\infty)$ is $\Lambda_k$-torsion. Furthermore, consider the quantity
$$
\varpi(E/k):= \frac{\Tam(E/k)\cdot \# \Sha(E/k)[\ell^\infty] }{\#E(k)^2} \cdot \prod_{v|\ell} \#\tilde{E}_v(\F_v)^2.
$$
Then $\varpi(E/k)\in \Z_\ell$, and if $\varpi(E/k)$ is an $\ell$-adic unit, then $X(E/k_\infty)$ is finite.
\end{proposition}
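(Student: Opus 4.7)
The plan is to deduce the proposition by combining the Euler-characteristic formula of Theorem \ref{Thmf0} with the structure theorem for torsion $\Lambda_k$-modules described in Paragraph \ref{ParAlg}, and to get everything started via Mazur's finiteness theorem (Theorem \ref{ThmMazurFin}).

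First I would verify that $\Sel_{\ell^\infty}(E/k)$ is finite. Because $E(k)$ is finite (hence torsion of bounded exponent), we have $E(k)\otimes_\Z\Q_\ell/\Z_\ell=0$, so the exact sequence \eqref{EqnSha} identifies $\Sel_{\ell^\infty}(E/k)$ with $\Sha(E/k)[\ell^\infty]$, which is finite by hypothesis. The parenthetical remark is handled by a one-line induction: if $\Sha(E/k)[\ell]=(0)$ and $x\in\Sha(E/k)[\ell^n]$, then $\ell^{n-1}x\in\Sha(E/k)[\ell]=(0)$, so $x\in\Sha(E/k)[\ell^{n-1}]$, giving $\Sha(E/k)[\ell^\infty]=(0)$. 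Applying Theorem \ref{ThmMazurFin} now gives that $X(E/k_\infty)$ is $\Lambda_k$-torsion, which is the first assertion.

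Next, with $\Sel_{\ell^\infty}(E/k)$ finite, Proposition \ref{Propf0} guarantees $f_{E/k}(0)\ne 0$ and Theorem \ref{Thmf0} yields
\[
f_{E/k}(0)\sim_\ell \varpi(E/k).
\]
Since $f_{E/k}(0)\in\Z_\ell$ is nonzero, this forces $v_\ell(\varpi(E/k))\ge 0$, so $\varpi(E/k)\in\Z_\ell$. To prove the finiteness criterion, assume $\varpi(E/k)$ is an $\ell$-adic unit; then $f_{E/k}(0)$ is a unit as well. Writing
\[
f_{E/k}=\ell^{\mu_{E/k}}\prod_j f_j^{m_j}
\]
with each $f_j\in\Z_\ell[T]$ a monic irreducible distinguished polynomial (necessarily of positive degree), two observations finish the argument: (i) if $\mu_{E/k}>0$ then $\ell\mid f_{E/k}(0)$; and (ii) if any $f_j$ occurs in the product, then $f_j(0)$ is a non-leading coefficient of a distinguished polynomial, hence $f_j(0)\in\ell\Z_\ell$ and again $\ell\mid f_{E/k}(0)$. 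The unit hypothesis rules out both possibilities, so $\mu_{E/k}=0$ and $\lambda_{E/k}=\sum_j m_j\deg f_j=0$. By Lemma \ref{Lemmalm}, $X(E/k_\infty)$ is finite.

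The whole proof is essentially routine once Theorem \ref{Thmf0} is available. The only point requiring care is the last step, where the single numerical datum ``$f_{E/k}(0)$ is a unit'' must simultaneously pin down the $\mu$- and $\lambda$-invariants; this works precisely because of the distinguished (Weierstrass) form of the factors $f_j$, which guarantees that every nontrivial factor contributes $\ell$-divisibility to the constant term.
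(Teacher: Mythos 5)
Your proof is correct and follows essentially the same route as the paper's: first use the exact sequence \eqref{EqnSha} together with Theorem \ref{ThmMazurFin} to get that $X(E/k_\infty)$ is $\Lambda_k$-torsion, then combine Proposition \ref{Propf0} with the Euler-characteristic formula of Theorem \ref{Thmf0} to conclude $\varpi(E/k)\sim_\ell f_{E/k}(0)\in\Z_\ell$, and finally read off $\mu_{E/k}=\lambda_{E/k}=0$ from the distinguished-polynomial structure of $f_{E/k}$ when the constant term is a unit. The small extra argument you give for the parenthetical implication $\Sha(E/k)[\ell]=(0)\Rightarrow\Sha(E/k)[\ell^\infty]=(0)$ is correct but not strictly required to match the paper's exposition.
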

\begin{proof}
Since $E(k)$ and  $\Sha(E/k)[\ell^\infty]$ are finite, the exact sequence \eqref{EqnSha}
$$
0\to E(k)\otimes_\Z\Q_\ell/\Z_\ell\to \Sel_{\ell^\infty}(E/k)\to \Sha(E/k)[\ell^\infty]\to 0
$$
gives that $\Sel_{\ell^\infty}(E/k)$ is also finite ---in fact, we get $\Sel_{\ell^\infty}(E/k)\simeq \Sha(E/k)[\ell^\infty]$. In particular, Theorem \ref{ThmMazurFin} gives that $X(E/k_\infty)$ is $\Lambda_k$-torsion. 

Thus, we can consider the characteristic polynomial $f_{E/k}\in\Z_\ell[T]$ (for a fixed choice of topological generator $\gamma\in\Gamma_k$). By Proposition \ref{Propf0} and the fact that $\Sel_{\ell^\infty}(E/k)$ is finite, we get that $f_{E/k}(0)$ is a non-zero $\ell$-adic integer. Theorem \ref{Thmf0}  gives $\varpi(E/k)\sim_\ell f_{E/k}(0)$, so $\varpi(E/k)\in\Z_\ell$. 

Suppose that $\varpi(E/k)$ is an $\ell$-adic unit.  Let us recall that $f_{E/k}=\ell^{\mu_{E/k}}\prod_j f_j^{m_j}$ for certain monic irreducible polynomials $f_j$ having all their non-leading coefficients in $\ell\Z_\ell$. Since $f_{E/k}(0)\sim_\ell \varpi(E/k)$ is an $\ell$-adic unit, we deduce that in fact $\mu_{E/k}=0$ and that the product $\prod_j f_j^{m_j}$ equals $1$ because it is empty ---for otherwise, the constant term of each $f_j$ would be divisible by $\ell$. Hence $\lambda_{E/k}=0$. Since $\mu_{E/k}=\lambda_{E/k}=0$, we get that $X(E/k_\infty)$ is finite (cf.\ Lemma \ref{Lemmalm}).
\end{proof}



\subsection{Chebotarev sets of primes} A set of prime numbers $\Scal$ is called a \emph{Chebotarev set} if there is a Galois extension $K/\Q$ and a conjugacy-stable set $C\subseteq \mathrm{Gal}(K/\Q)$ such that $S$ agrees with the set $\{p : \Frob_p\in C\}$ up to a finite set. Finite unions, finite intersections, and complements of Chebotarev sets are again Chebotarev. Let us recall that the Chebotarev density theorem states that if $\Scal$ arises from $K$ and $C$ as above, then the limit
$$
\delta(\Scal)=\lim_{x\to \infty}\frac{\#\Scal\cap[1,x]}{\pi(x)}
$$
exists and equals $\#C/[K:\Q]$, where $\pi(x)$ is the number of prime numbers  $p\le x$. The quantity $\delta(\Scal)$ is called the \emph{density} of $\Scal$.

\subsection{Auxiliary primes}  The next result ensures that we have enough primes for the constructions required in the proof of Theorem \ref{ThmRkZero}. As usual, if $E$ is an elliptic curve over $\Q$ and $p$ is a prime of good reduction, we write $a_p(E)=p+1-\# \tilde{E}_p(\F_p)$.
\begin{proposition} \label{PropCheb} Let $\ell$ be a prime number. Let $E$ be an elliptic curve defined over $\Q$ of conductor $N$. Suppose that the residual Galois representation $\rho_{E[\ell]}$ is surjective. Consider the set of prime numbers
$$
\Pcal(E,\ell)=\{p : p\nmid N\mbox{, }p \equiv 1\bmod \ell\mbox{, and }a_p(E)\not\equiv 2\bmod \ell\}.
$$
Then $\Pcal(E,\ell)$ is a Chebotarev set of primes with density 
$$
\delta(\Pcal(E,\ell))=\frac{\ell^2-\ell-1}{(\ell-1)(\ell^2-1)}>0.
$$
Furthermore, for each $p\in\Pcal(E,\ell)$ and each place $v| p$ of $\Q(\mu_\ell)$ we have that $\tilde{E}_v(\F_v)[\ell]=(0)$.
\end{proposition}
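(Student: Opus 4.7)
\smallskip

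The plan is to recognize $\Pcal(E,\ell)$ as a Chebotarev condition read off from the mod-$\ell$ representation and then to count. Set $K=\Q(E[\ell])$ and identify $G=\Gal(K/\Q)$ with $GL_2(\F_\ell)$ via the surjective representation $\rho_{E[\ell]}$. For any prime $p\nmid N\ell$ we have $\det\rho_{E[\ell]}(\Frob_p)=p\bmod \ell$ (by the Weil pairing, the determinant is the mod-$\ell$ cyclotomic character) and $\tr \rho_{E[\ell]}(\Frob_p)\equiv a_p(E)\bmod \ell$. Hence the condition $p\equiv 1\bmod\ell$ is equivalent to $\Frob_p\in SL_2(\F_\ell)$, and the condition $a_p(E)\not\equiv 2\bmod\ell$ is equivalent to $\tr\Frob_p\ne 2$ in $\F_\ell$. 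Therefore, up to the finite set of primes dividing $N\ell$, the set $\Pcal(E,\ell)$ coincides with the Chebotarev set attached to the conjugation-stable subset
$$
C=\{g\in SL_2(\F_\ell):\tr g\ne 2\}\subseteq GL_2(\F_\ell).
$$

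For the density we compute $|C|$. The trace-$2$ elements of $SL_2(\F_\ell)$ have characteristic polynomial $(x-1)^2$, hence they are unipotent: the identity together with the non-identity unipotents. The centralizer of $u=\bigl(\begin{smallmatrix}1 & 1\\ 0 & 1\end{smallmatrix}\bigr)$ in $GL_2(\F_\ell)$ is $\{\bigl(\begin{smallmatrix}a & b\\ 0 & a\end{smallmatrix}\bigr):a\in\F_\ell^\times,\, b\in\F_\ell\}$, which meets $SL_2$ in $\{\pm I\}\cdot\{\bigl(\begin{smallmatrix}1 & b\\ 0 & 1\end{smallmatrix}\bigr)\}$, of order $2\ell$ (for $\ell$ odd). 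By orbit-stabilizer, the $GL_2$-conjugacy class of $u$ has size $\ell^2-1$ and coincides with the set of non-identity unipotents, so the total number of trace-$2$ elements in $SL_2(\F_\ell)$ equals $1+(\ell^2-1)=\ell^2$. Using $|SL_2(\F_\ell)|=\ell(\ell^2-1)$ and $|GL_2(\F_\ell)|=\ell(\ell-1)(\ell^2-1)$ we get
$$
\delta(\Pcal(E,\ell))=\frac{|C|}{|GL_2(\F_\ell)|}=\frac{\ell(\ell^2-1)-\ell^2}{\ell(\ell-1)(\ell^2-1)}=\frac{\ell^2-\ell-1}{(\ell-1)(\ell^2-1)},
$$
which is positive for every prime $\ell$.

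For the local statement, note that $p\equiv 1\bmod\ell$ forces $p$ to split completely in the cyclotomic extension $\Q(\mu_\ell)/\Q$, so for every place $v\mid p$ of $\Q(\mu_\ell)$ the residue field is $\F_v=\F_p$ and $\tilde E_v=\tilde E_p$. Then
$$
\#\tilde E_v(\F_v)=\#\tilde E_p(\F_p)=p+1-a_p(E)\equiv 2-a_p(E)\not\equiv 0\pmod\ell
$$
by the defining condition on $a_p(E)$, so $\tilde E_v(\F_v)[\ell]=(0)$. The only real content is the conjugacy-class count in $SL_2(\F_\ell)$, which is elementary; everything else is a matching of standard facts about $\rho_{E[\ell]}$ to the definition of $\Pcal(E,\ell)$.
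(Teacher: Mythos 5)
Your proof is correct and follows essentially the same route as the paper: translate the conditions into $\det = 1$, $\tr \ne 2$ for $\rho_{E[\ell]}(\Frob_p) \in GL_2(\F_\ell)$, count, and read off the last claim from complete splitting of $p$ in $\Q(\mu_\ell)$. The only difference is that you supply the count of trace-$2$ elements in $SL_2(\F_\ell)$ (via Cayley--Hamilton and orbit-stabilizer for the $GL_2$-conjugacy class of a nontrivial unipotent), which the paper leaves as an elementary exercise.
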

\begin{proof} The Weil pairing shows that the $\ell$-division field $K=\Q(E[\ell])$ contains $\Q(\mu_\ell)$. The cyclotomic character $\chi:\Gal(\Q(\mu_\ell)/\Q)\to \F_\ell^\times$ extends to $\chi:\Gal(K/\Q)\to \F_\ell^\times$ by means of the quotient $\Gal(K/\Q)\to \Gal(\Q(\mu_\ell)/\Q)$. We recall that the residual Galois representation $\rho_{E[\ell]}:\Gal(K/\Q)\to GL_2(\F_\ell)$ is injective and for all $p\nmid \ell N$ it satisfies 
\begin{itemize}
\item $\det(\rho_{E[\ell]}(\Frob_p))=\chi(\Frob_p)=p\bmod \ell$, and 
\item $\tr(\rho_{E[\ell]}(\Frob_p))=a_p(E)\bmod \ell$.
\end{itemize}
The map $\rho_{E[\ell]}:\Gal(K/\Q)\to GL_2(\F_\ell)$ is in fact an isomorphism by our surjectivity assumption. Let $C=\{\gamma\in GL_2(\F_\ell) :  \det(\gamma)=1\mbox{ and }\tr(\gamma)\ne 2\}\subseteq GL_2(\F_\ell)$. The set $C$ is conjugacy-stable and it is non-empty (e.g.\ $-I\in C$ when $\ell>2$). In fact, it is an elementary exercise to check that $\#\{\gamma\in SL_2(\F_\ell) : \tr(\gamma)=2\}=\ell^2$, which gives
$$
\# C =\#SL_2(\F_\ell)-\#\{\gamma\in SL_2(\F_\ell) : \tr(\gamma)=2\}= \ell(\ell^2-1)- \ell^2=\ell(\ell^2-\ell-1).
$$
Since we can write
$$
\Pcal(E,\ell)=\{p : p\nmid N \mbox{  and }\rho_{E[\ell]}(\Frob_p)\in C\}
$$
we see that $\Pcal(E,\ell)$ is a Chebotarev set of density
$$
\delta(\Pcal(E,\ell))=\frac{\#C}{\# GL_2(\F_\ell)} = \frac{\ell(\ell^2-\ell-1)}{(\ell^2-1)(\ell^2-\ell)}=\frac{\ell^2-\ell-1}{(\ell-1)(\ell^2-1)}>0.
$$
Finally, let $p\in \Pcal(E,\ell)$. Since $p\equiv 1\bmod \ell$ we have that $p$ splits completely in $\Q(\mu_\ell)$, hence, for each place $v|p$ of $\Q(\mu_\ell)$ we have that the residue field of $\Q(\mu_\ell)$ at $v$ is $\F_v=\F_p$. Therefore $\tilde{E}_v(\F_v)\simeq \tilde{E}_p(\F_p)$ because $E$ has good reduction at $p\nmid N$, and we get
$$
\# \tilde{E}_v(\F_v) = p+1-a_p(E)\equiv 1+1-a_p(E)\not\equiv 0\bmod \ell
$$
because $p\in\Pcal(E,\ell)$. Thus, the $\ell$-torsion of $\tilde{E}_v(\F_v)$ is trivial.
\end{proof}



\subsection{Rank zero over extensions} Let us keep the notation of assumptions of Theorem \ref{ThmRkZero}.
\begin{proof}[Proof of Theorem \ref{ThmRkZero}] The fact that $\Pcal(E,\ell)$ is a Chebotarev set of primes of the asserted density follows from Proposition \ref{PropCheb} and our Condition \eqref{B}. 

Let $k=\Q(\mu_\ell)$. By Condition \eqref{A}, $E$ has good ordinary reduction at each place $v|\ell$ of $k$. Furthermore, $\ell$ ramifies completely in $k/\Q$, thus, there is only one place $v|\ell$ and the residue field satisfies $\F_v=\F_\ell$, which in particular gives $\tilde{E}_v(\F_v)\simeq \tilde{E}_\ell(\F_\ell)$ because $E$ has good reduction at $\ell$.

By Condition \eqref{D} we get $\Sha(E/k)[\ell^\infty]=(0)$, which together with Condition \eqref{E} gives that the integer
$$
\Tam(E/k)\cdot  \# \Sha(E/k)[\ell^\infty]  \cdot \prod_{v|\ell} \#\tilde{E}_v(\F_v)^2
$$
is not divisible by $\ell$. By Conditions \eqref{A}, \eqref{C}, and \eqref{D} we can apply Proposition \ref{PropTestFinX} and we deduce that $X(E/k_{\infty})$ is a finite group because the $\ell$-adic integer $\varpi(E/k)$ is not divisible by $\ell$.

Let $a>1$ be an $\ell$-power free integer supported on $\Pcal(E,\ell)$ and let $k'=\Q(\mu_\ell, \sqrt[\ell]{a})=k(\sqrt[\ell]{a})$. The extension $k'/k$ is Galois of degree $\ell$. Note that $\ell>2$, $E$ has good ordinary reduction at each prime of $k$ above $\ell$ (by Condition \eqref{A}), and $X(E/k_\infty)$ is finite, so that we can apply Proposition \ref{PropPreserveXfin}. For this, note that if a prime $v\in M_k$ ramifies in $k'/k$ then it divides $a$, hence, $v$ divides a prime $p\in\Pcal(E, \ell)$ and we deduce that (cf.\ Proposition \ref{PropCheb}):
\begin{itemize}
\item  $E$ has good reduction at $v$ (as $p\nmid N$ by definition of $\Pcal(E,\ell)$), and
\item $\tilde{E}_v(\F_v)[\ell]=(0)$.
\end{itemize}
Therefore, Proposition \ref{PropPreserveXfin} gives that $X(E/k'_\infty)$ is finite. In particular $\lambda_{E/k'}=0$ (cf. Lemma \ref{Lemmalm}), $\Sel_{\ell^\infty}(E/k')$ is finite, and $\rk E(k')=0$ (cf.\ Theorem \ref{ThmLambdaRk}).
\end{proof}


\section{Achieving positive rank: Heegner points} \label{SecKL}

In this section we present a theorem by Kriz and Li \cite{KrizLi} regarding congruences of Heegner points on quadratic twists of elliptic curves, along with some additional facts tailored for our intended applications. The results of Kriz and Li are particularly convenient for producing many explicit quadratic twists with positive rank for a given elliptic curve over $\Q$. We also work-out a concrete example that will be needed later.

\subsection{Heegner points} \label{SecHeegner} Let $N$ be a positive integer.  Attached to $N$ there is the modular curve $X_0(N)$ over $\Q$ whose non-cuspidal points classify cyclic degree $N$ isogenies of elliptic curves. The cusp $i\infty\in X_0(N)(\Q)$ determines an embedding of the modular curve into its Jacobian $j_N: X_0(N)\to J_0(N)$ defined over $\Q$.

A quadratic imaginary field $K$ is said to satisfy the \emph{Heegner hypothesis} for $N$ if each prime $p|N$ splits in $K$. If $K$ satisfies the Heegner condition for $N$ we can choose a factorization $(N)=\nfrak \nfrak'$ in $O_K$ with $\nfrak'$ the complex conjugate of the ideal $\nfrak$, and one has $O_K/\nfrak\simeq \Z/N\Z$. The map 
$$
\C/O_K\to \C/\nfrak^{-1}
$$
is a cyclic degree $N$ isogeny of complex elliptic curves, and it defines a non-cuspidal point $Q_{N,K, \nfrak}\in X_0(N)(H)$ where $H$ is the Hilbert class field of $K$. Mapping to $J_0(N)$ and taking trace for the extension $H/K$, we obtain the $K$-rational point
$$
P_{N,K,\nfrak}=\sum_{\sigma\in \Gal(H/K)} j_N(Q_{N,K, \nfrak}^\sigma)\in J_0(N)(K).
$$
The point $P_{N,K,\nfrak}$ is the \emph{Heegner point in $J_0(N)$ attached to $K$}. It is independent of the choice of factorization $N=\nfrak\nfrak'$ up to sign and adding torsion, and we will denote it by $P_{N,K}$ because this ambiguity is irrelevant for our discussion.

\subsection{Modular parametrizations} \label{SecModular} Let $E$ be an elliptic curve over $\Q$ of conductor $N$. The modularity theorem \cite{BCDT, TaylorWiles, Wiles} gives a non-constant map $\varphi_E:X_0(N)\to E$ defined over $\Q$, with $\varphi_E(i\infty)=0_E$ where $0_E$ is the neutral point of $E$. Such a map $\varphi_E$ is called modular parametrization. It induces a morphism of abelian varieties $\pi_E : J_0(N)\to E$ defined over $\Q$ satisfying $\varphi_E=\pi_E\circ j_N$. Of course $\varphi_E$ is not unique; however, if the degree of $\varphi_E$ is assumed to be minimal (which we don't) then $\varphi_E$ is determined up to multiplication by $-1$ in $E$.

Let $\omega_E\in H^0(E,\Omega^1_{E/\Q})$ be a global N\'eron differential on $E$; it is unique up to sign. Let $f(q)=q+a_2q^2+...\in S_2(\Gamma_0(N))$ be the unique Fourier-normalized Hecke newform attached to $E$ by the modularity theorem. Then there is a rational number $c(\varphi_E, \omega_E)$ such that
$$
\varphi_E^*\omega_E=c(\varphi_E,\omega_E)\cdot f(q)\frac{dq}{q}.
$$
The absolute value of $c(\varphi_E,\omega_E)$ only depends on $\varphi_E$ and it will be denoted by $c(\varphi_E)$. By considerations on the formal completion of the standard integral model of $X_0(N)$ at $i\infty$, it is easily seen that $c(\varphi_E)$ is an integer. In concrete examples, the Manin constant of $\varphi_E$ can be computed.

A modular parametrization $\varphi_E:X_0(N)\to E$ is \emph{optimal} if the kernel of the induced map $\pi_E:J_0(N)\to E$ is connected. If an optimal modular parametrization exists for $E$ we say that $E$ is optimal (sometimes referred to as strong Weil curve). Every $E$ is isogenous over $\Q$ to an optimal elliptic curve. It is a conjecture of Manin that if $\varphi_E:X_0(N)\to E$ is optimal, then $c(\varphi_E)=1$. Manin's conjecture is proved in a number of cases, e.g.\ when the conductor $N$ is odd and squarefree (this is Corollary 4.2  in \cite{MazurIsog} together with Th\'eor\`eme A in \cite{AbbesUllmo}). See \cite{ARS} and the references therein for more details on Manin's conjecture.

\subsection{Logarithms} \label{SecLog} Let $E$ be an elliptic curve over $\Q$ and let $p$ be a prime. By integration, every differential $\omega\in H^0(E,\Omega^1_{E/\Q})$ uniquely determines a logarithm map
$$
\log_{p,\omega} : E(\Q_p)\to \Q_p
$$
which satisfies the following:
\begin{itemize}
\item For all $a,b\in E(\Q_p)$, $\log_{p,\omega}(a+b)=\log_{p,\omega}(a)+\log_{p,\omega}(b)$.
\item If $\omega\ne 0$ then the kernel of $\log_{p,\omega}$ is $E(\Q_p)_{tor}$.
\end{itemize}
Concretely, the construction is as follows:
Choose a global minimal Weierstrass equation over $\Z$
\begin{equation}\label{EqGenW}
y^2+a_1xy+a_3y=x^3+a_2x^2+a_4x+a_6
\end{equation}
and let $t=-x/y$. Then $t$ is a local parameter at the neutral element  $0_E\in E(\Q)$ and, moreover, a local point $a\in E(\Q_p)$ is congruent to $0_E$ modulo $p$ if and only if $t(a)\in p\Z_p$. Let us write $\omega= (b_0+b_1t+b_2t^2+...)dt$.  Given any $a\in E(\Q_p)$, choose $m$ such that $ma$ is congruent to $0_E$ modulo $p$. Then
$$
\log_{p,\omega}(a)=\frac{1}{m}\sum_{r= 1}^\infty \frac{b_{r-1}\cdot t(ma)^r}{r}
$$
where the series converges because $t(ma)\in p\Z_p$. The value of $\log_{p,\omega}(a)$ is independent of the choice of $m$. We note that the series is the formal integral of $\omega$.

We will consider in the following situation: $K/\Q$ is a quadratic field such that $p$ splits in $K$ and $a\in E(K)$ is a point (in fact, we will take $p=2$). The choice of a prime $\pfrak|p$ in $O_K$ is equivalent to the choice of an embedding $\sigma:K\to \Q_p$. Such a $\sigma$ induces an inclusion $\sigma: E(K)\to E(\Q_p)$ and the quantity $\log_{p,\omega} \sigma(a)$ is thus defined.

For explicit computations with logarithms, it will be convenient to recall the following formulas from Section IV.1 in \cite{SilvermanAdv}. The global N\'eron differential over $\Z$ for $E$ is (in affine coordinates of the model \eqref{EqGenW}) given by
$$
\omega_E=\frac{dx}{2y+a_1x +a_3}
$$
where $a_j\in\Z$ are the coefficients in the global minimal Weierstrass equation \eqref{EqGenW}. This differential expressed in terms of the local parameter $t=-x/y$ at $0_E$ is given by 
\begin{equation}\label{EqDiff}
\omega_E=(1+a_1t+(a_1^2+a_2)t^2 + (a_1^3+2a_1a_2+2a_3)t^3+...)dt\in \Z[[t]]dt
\end{equation}
where the coefficients of the power series are integers that can be computed in terms of the coefficients $a_j$ from \eqref{EqGenW}. For our purposes these first few coefficients suffice.

\subsection{Auxiliary primes}

For an elliptic curve $E/\Q$ of conductor $N$ and a quadratic field $K$, let us define the sets of prime numbers
$$
\begin{aligned}
\Qcal(E,K)&=\{q : q\nmid 2N,\, q\mbox{ splits in }K, \mbox{ and } a_q(E)\equiv 1\bmod 2 \}\\
\Qcal_+(E,K)&=\{q : q\in \Qcal(E,K)\mbox{ and }q\equiv 1\bmod 4\}\\
\Qcal_-(E,K)&=\{q : q\in \Qcal(E,K)\mbox{ and }q\equiv -1\bmod 4\}
\end{aligned}
$$
and observe that $\Qcal(E,K)=\Qcal_+(E,K)\cup \Qcal_-(E,K)$.
\begin{lemma} \label{LemmaPrimes} The sets $\Qcal(E,K)$, $\Qcal_+(E,K)$, and $\Qcal_-(E,K)$ are Chebotarev sets of primes. If $E[2](\Q)\ne (0)$ then the three sets are empty.

Suppose that $E[2](\Q)=(0)$ and $K\ne \Q(\sqrt{-1})$.
\begin{itemize}
\item[(i)]  If $\Gal(\Q(E[2])/\Q)\simeq \Z/3\Z$ then
$$
\delta(\Qcal(E,K))=1/3,\quad \delta(\Qcal_+(E,K))=1/6, \mbox{ and }\, \delta(\Qcal_-(E,K))=1/6.
$$
\item[(ii)] If $\Gal(\Q(E[2])/\Q)\simeq S_3$ and $\Q(E[2])$ does not contain the fields $\Q(\sqrt{-1})$ and $K$, then
$$
\delta(\Qcal(E,K))=1/6,\quad \delta(\Qcal_+(E,K))=1/12, \mbox{ and }\, \delta(\Qcal_-(E,K))=1/12.
$$
\item[(iii)] If $\Gal(\Q(E[2])/\Q)\simeq S_3$ and $\Q(E[2])$ contains the field $\Q(\sqrt{-1})$, then $\Qcal_-(E,K)$ is empty, $\Qcal_+(E,K)=\Qcal(E,K)$, and  $\delta(\Qcal(E,K))=1/6$.
\end{itemize}
\end{lemma}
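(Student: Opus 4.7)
The plan is to rephrase each defining condition of $\Qcal(E,K)$ and $\Qcal_\pm(E,K)$ as a Frobenius condition in the Galois extension $L=\Q(E[2])\cdot K\cdot\Q(\sqrt{-1})$ of $\Q$, and then apply the Chebotarev density theorem. Concretely, for $q\nmid 2N$ one has that $q$ splits in $K$ iff $\Frob_q|_K=1$; the residue $q\bmod 4$ is determined by $\Frob_q|_{\Q(\sqrt{-1})}$; and $a_q(E)\equiv 1\bmod 2$ iff $\tr\rho_{E[2]}(\Frob_q)=1$ in $\F_2$, where $\rho_{E[2]}:\Gal(\Q(E[2])/\Q)\hookrightarrow GL_2(\F_2)$ is the residual mod-$2$ representation. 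Each such condition cuts out a conjugacy-stable subset of $\Gal(L/\Q)$, so the three sets will be Chebotarev up to finitely many primes.

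For the emptiness statement when $E[2](\Q)\ne(0)$, I would argue that the image of $\rho_{E[2]}$ then fixes some nonzero $v\in E[2]$; a direct check in $GL_2(\F_2)$ (order $6$) shows that the pointwise stabilizer of any nonzero vector consists of two matrices, both of trace $0$ in $\F_2$. Hence $a_q(E)\equiv 0\bmod 2$ for every $q\nmid 2N$, so all three sets are empty.

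Assuming henceforth $E[2](\Q)=(0)$ and $K\ne\Q(\sqrt{-1})$, the key group-theoretic input is that in $GL_2(\F_2)\simeq S_3$ the elements of trace $1$ are exactly the two $3$-cycles, i.e.\ the set $A_3\setminus\{1\}$. In case (i), $\Gal(\Q(E[2])/\Q)\simeq\Z/3\Z$ has odd order so no quadratic subfield, making $\Q(E[2])$ linearly disjoint from the biquadratic field $K\cdot\Q(\sqrt{-1})$; the Galois group $\Gal(L/\Q)\simeq\Z/3\Z\times\Z/2\Z\times\Z/2\Z$ has order $12$, and a direct count yields $\delta(\Qcal)=(2/3)(1/2)=1/3$ and $\delta(\Qcal_\pm)=(2/3)(1/2)(1/2)=1/6$. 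In case (iii), $\Q(\sqrt{-1})\subseteq\Q(E[2])$ forces $\Q(\sqrt{-1})=F$, where $F=\Q(E[2])^{A_3}$ is the unique quadratic subfield; since any $\Frob_q$ of trace $1$ (being in $A_3$) acts trivially on $F=\Q(\sqrt{-1})$, one has $q\equiv 1\bmod 4$ automatically, so $\Qcal_-=\emptyset$, $\Qcal=\Qcal_+$, and counting on $\Gal(\Q(E[2])\cdot K/\Q)\simeq S_3\times\Z/2\Z$ (using that $K\ne F$ gives linear disjointness) yields $\delta(\Qcal)=2/12=1/6$.

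The hard part will be case (ii). The hypothesis that $\Q(E[2])$ contains neither $K$ nor $\Q(\sqrt{-1})$ amounts to $F\notin\{K,\Q(\sqrt{-1})\}$, so both $K$ and $\Q(\sqrt{-1})$ are linearly disjoint from $\Q(E[2])$; however, this does not by itself prevent $\Q(\sqrt{-1})$ from coinciding with the third quadratic subfield $FK$ of $\Q(E[2])\cdot K$, and in that exceptional situation a $3$-cycle Frobenius splitting in $K$ automatically splits in $\Q(\sqrt{-1})$, making $\Qcal_-$ empty. In the generic situation $\Q(\sqrt{-1})\ne FK$ one has $\Gal(L/\Q)\simeq S_3\times\Z/2\Z\times\Z/2\Z$ of order $24$, and the same counting argument as above yields $\delta(\Qcal)=1/6$ and $\delta(\Qcal_\pm)=1/12$. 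The main obstacle is thus the careful compositum bookkeeping: verifying linear disjointness among the three fields $\Q(E[2])$, $K$, $\Q(\sqrt{-1})$ and identifying the conjugacy classes in $\Gal(L/\Q)$ that realize each condition.
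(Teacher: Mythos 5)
Your strategy mirrors the paper's: rephrase the three defining conditions as Frobenius conditions in a compositum, use that the trace-$1$ elements of $GL_2(\F_2)\simeq S_3$ are exactly the two $3$-cycles (hence lie in $A_3$), and apply Chebotarev. Your treatment of the emptiness claim, of case (i), and of case (iii) is correct and essentially identical to the paper's (the paper phrases case (iii) via the sign homomorphism $S_3\to\mu_2$ rather than by remarking directly that $3$-cycles land in $A_3$, but this is the same observation).

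Your hesitation about case (ii) is well-founded, and it in fact exposes a gap in the paper's own one-line justification (``by linear disjointness of the relevant fields''). The hypothesis that $\Q(E[2])$ contains neither $\Q(\sqrt{-1})$ nor $K$ gives only pairwise disjointness; it does not rule out $\Q(\sqrt{-1})$ being the third quadratic subfield of $F\cdot K$, where $F$ denotes the unique quadratic subfield of $\Q(E[2])$. For example, if $F=\Q(\sqrt{2})$ and $K=\Q(\sqrt{-2})$ then $\Q(\sqrt{-1})\subseteq F\cdot K$ while all hypotheses of (ii) hold; and then, exactly as you note, every $3$-cycle Frobenius that splits in $K$ also splits in $F$ (being in $A_3$) hence in $\Q(\sqrt{-1})$, forcing $q\equiv 1\bmod 4$ and therefore $\Qcal_-(E,K)=\emptyset$, contradicting the claimed density $1/12$. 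The clean repair is to strengthen the hypothesis of (ii) to require $\Q(\sqrt{-1})\not\subseteq\Q(E[2])\cdot K$ (equivalently, joint linear disjointness of $\Q(E[2])$, $K$, and $\Q(\sqrt{-1})$), after which the count on $\Gal(L/\Q)\simeq S_3\times\Z/2\Z\times\Z/2\Z$ gives the stated densities. This does not affect the paper's application in Lemma~\ref{LemmaRkOne557b1}, where $F=\Q(\sqrt{557})$ and $K=\Q(\sqrt{-7})$, so the third quadratic subfield of $FK$ is $\Q(\sqrt{-3899})\neq\Q(\sqrt{-1})$. Your proposal, however, leaves case (ii) in the degenerate situation unresolved; to be a complete proof of the lemma as stated you would either have to show the degenerate situation cannot occur (it can) or, better, amend the hypothesis as above.
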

\begin{proof} The condition $a_q(E)\equiv 1\bmod 2$ at a prime $q\nmid N$ is equivalent to the condition
$$
\tr (\rho_{E[2]}(\Frob_q))=1\in \F_2
$$
for the residual Galois representation $\rho_{E[2]}:\Gal(\Q(E[2])/\Q)\to GL_2(\F_2)\simeq S_3$. The only elements of $GL_2(\F_2)$ with trace equal to $1$ are 
$$
\left[\begin{array}{cc} 1&1\\1&0\end{array}\right]\, \mbox{ and }\, \left[\begin{array}{cc} 0&1\\1&1\end{array}\right]
$$
i.e.\ the two $3$-cycles in $S_3$. 

The three sets of primes are empty if there is a rational $2$-torsion point, since in this case every prime $q\nmid 2N$ satisfies $a_q(E)\equiv \tr(\rho_{E[2]}(\Frob_q))\equiv 0\bmod 2$.

In case (i), since $\Q(E[2])$ does not contain quadratic fields, the result follows from the Chebotarev density theorem and the fact that the congruence conditions modulo $4$ are splitting conditions for the extension $\Q(\sqrt{-1})/\Q$.

In cases (ii) and (iii) we have $\delta(\Qcal(E,K))=1/6$ by the Chebotarev density theorem. Case (ii) follows by linear disjointness of the relevant fields. 

In case (iii) it only remains to show that $\Qcal_-(E,K)$ is empty. Note that $\Q(E[2])$  does not contain $K$ because a hexic extension of $\Q$ contains a unique quadratic subfield. The only non-trivial morphism $s: GL_2(\F_2)\simeq S_3\to \mu_2$ is the sign map, and it sends the trace $1$ matrices (i.e. $3$-cycles of $S_3$) to $1\in \mu_2$. By uniqueness of $s$, the composition $s\circ \rho_{E[2]}:\Gal(\Q(E[2])/\Q)\to \mu_2$ is the same as the map induced by the non-trivial quadratic character $\chi:\Gal(\Q(\sqrt{-1})/\Q)\to\mu_2$ via the restriction map $\Gal(\Q(E[2])/\Q)\to \Gal(\Q(\sqrt{-1})/\Q)$. Since each prime $q\equiv -1\bmod 4$ has $\chi(\Frob_q)=-1$, we get that $\Qcal_-(E,K)$ is empty.
\end{proof}
Case (iii) in Lemma \ref{LemmaPrimes} can in fact happen and we must avoid it for our intended applications (cf. the remarks after Corollary \ref{CoroKL}). For instance, the elliptic curve $E$ of Cremona label $121a1$ has the property that $\Gal(\Q(E[2])/\Q)\simeq S_3$  and $\Q(\sqrt{-1})\subseteq \Q(E[2])$. Here is the data for this $E$ and the first $20$ primes $q\nmid 2N$
$${\tiny
\begin{array}{c|rrrrrrrrrrrrrrrrrrrr}
q \nmid 2\cdot 121  & 3 & 5 & 7 & 13 & 17 & 19 & 23 & 29 & 31 & 37 & 41 & 43 & 47 & 53 & 59 & 61 & 67 & 71 & 73 & 79  \\
q\bmod 4                & -1 & 1 & -1 & 1 & 1   & -1  & -1  & 1   & -1  & 1    & 1  & -1  & -1 &  1  & -1  & 1   & -1  & -1  & 1  & -1  \\
a_q(E)                    & 2  & 1 & 2 & -1 & 5   & -6  &   2  & -9 &  -2  & -3  & 5  & 0   & 2   &  9  & 8   & -6   &  2  & 12  & 2 & 10 
\end{array}}
$$ 
Already for these first few values we see that when $q\equiv -1\bmod 4$ we get $a_q(E)$ even, while for $q\equiv 1\bmod 4$ we can have $a_q(E)$ even or odd depending on whether $\rho_{E[2]}(\Frob_q)\in GL_2(\F_2)\simeq S_3$ is the identity or a $3$-cycle respectively.

Let us point out that in Lemma \ref{LemmaPrimes} the condition that $K$ is not contained in $\Q(E[2])$ will not be a problem for us, due to the following observation.
\begin{lemma} Let $E$ be an elliptic curve over $\Q$ of conductor $N$ satisfying $\Gal(\Q(E[2])/\Q)\simeq S_3$. There is only one quadratic field contained in $\Q(E[2])$ and it does not satisfy the Heegner hypothesis for $2N$.
\end{lemma}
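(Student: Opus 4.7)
The plan is to identify the unique quadratic subfield $F\subseteq\Q(E[2])$ explicitly in terms of the discriminant of $E$, and then observe that $F$ is necessarily ramified at some prime dividing $2N$, which prevents it from satisfying the Heegner hypothesis.

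First, the uniqueness assertion is immediate from Galois theory: the group $S_3$ has a unique subgroup of index $2$, namely $A_3$, so by the Galois correspondence there is exactly one intermediate field of degree $2$ over $\Q$ inside $\Q(E[2])$.

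Next, I would identify this field. After a rational change of variables (possible since $\mathrm{char}\,\Q\neq 2$), write $E:y^2=f(x)$ with $f\in\Q[x]$ monic of degree $3$. The non-trivial $2$-torsion points are $(e,0)$ with $f(e)=0$, so $\Q(E[2])$ is the splitting field of $f$. The hypothesis $\mathrm{Gal}(\Q(E[2])/\Q)\simeq S_3$ says $f$ is irreducible with non-square discriminant, and the classical quadratic resolvent of the cubic then identifies the unique quadratic subfield as $F=\Q(\sqrt{\Disc f})$. Since $\Delta_E$ and $\Disc f$ differ by the square factor $16$, equivalently $F=\Q(\sqrt{D})$ where $D$ is the squarefree part of the minimal discriminant of $E$.

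Finally, I would exhibit a prime dividing $2N$ that ramifies in $F/\Q$ (and therefore fails to split). Every odd prime dividing $D$ divides the minimal discriminant of $E$, hence is a prime of bad reduction of $E$, hence divides $N$. Two cases:
\begin{itemize}
\item[(a)] If $D\not\equiv 1\pmod 4$, the discriminant of $F$ is $4D$ so $2$ ramifies in $F$, and of course $2\mid 2N$.
\item[(b)] If $D\equiv 1\pmod 4$, then since $[F:\Q]=2$ we have $|D|>1$, so $D$ admits an odd prime divisor $p$; by the preceding observation $p\mid N$, and $p$ ramifies in $F$ since $p\mid\mathrm{disc}(F)=D$.
\end{itemize}
In either case, some prime dividing $2N$ does not split in $F$, so $F$ fails the Heegner hypothesis for $2N$.

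I do not anticipate a real obstacle: the statement is essentially a discriminant-theoretic observation once the quadratic subfield is pinned down. The only mildly delicate point is remembering that the passage from $\Disc f$ to the minimal discriminant $\Delta_E^{\min}$ only alters the discriminant by a rational square and by factors at $2$ (a $12$th-power change of Weierstrass model), so the squarefree part $D$ is well-defined up to sign and its odd prime divisors are genuinely among the bad primes of $E$.
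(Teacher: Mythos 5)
Your proof is correct, but it takes a more explicit route than the paper. The paper's entire argument for the second claim is one line: since $F\subseteq\Q(E[2])$ and $\Q(E[2])/\Q$ is unramified outside $2N$ (criterion of N\'eron--Ogg--Shafarevich: $E[2]$ is unramified at odd primes of good reduction), every prime ramifying in $F$ divides $2N$; and a quadratic field must have at least one ramified prime, which therefore cannot split, so the Heegner hypothesis fails. You instead pin down $F$ explicitly as $\Q(\sqrt{\Disc f})=\Q(\sqrt{D})$ via the cubic resolvent and then run a case analysis on $D\bmod 4$ to locate a ramified prime dividing $2N$. Both are sound. What the paper's phrasing buys is brevity and avoidance of any discriminant bookkeeping; what yours buys is an explicit identification of the obstruction field in terms of $\Delta_E$ and an argument that does not invoke the ramification criterion for Galois representations. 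One small stylistic remark: your parenthetical about ``factors at $2$'' is unnecessary --- any two Weierstrass discriminants for $E$ over $\Q$ differ by a $12$th power, hence a square, so the squarefree part $D$ is well-defined with no caveat needed at $2$.
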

\begin{proof} Since $[\Q(E[2]):\Q]=6$, there is only one quadratic field $F\subseteq \Q(E[2])$. The primes that ramify in $F$ divide $2N$, hence the result.
\end{proof}

\subsection{Congruences of Heegner points, after Kriz and Li}
The following is a re-statement of Theorem 4.3 in \cite{KrizLi}, keeping track of some choices that are made along the proof in \emph{loc.\ cit.}
\begin{theorem}[Kriz-Li \cite{KrizLi}] \label{ThmKL}
Suppose $E/\mathbb{Q}$ is an elliptic curve with $E(\mathbb{Q})[2]=(0)$. Consider a modular parametrization $\varphi_E:X_0(N)\to E$ and the corresponding quotient $\pi_E:J_0(N)\to E$. Let $K$ be an imaginary quadratic field satisfying the Heegner hypothesis for $2N$ and let $\sigma:K\to \mathbb{Q}_2$ be an embedding. Assume that 
\begin{equation}\label{CondKL}
\frac{\#\tilde{E}_2^{ns}(\mathbb{F}_2)}{2\cdot c(\varphi_E)}\cdot \log_{2,\omega_E}(\sigma(\pi_E(P_{N,K})))\in \Z_2^\times.
\end{equation}
Then for each squarefree integer $d\equiv 1\bmod 4$ supported on $\Qcal(E,K)$ the following holds:
\begin{itemize}
\item[(i)]  Both $E$ and $E^{d}$ have rank at most $1$ over $\Q$, and the rank part of the Birch and Swinnerton-Dyer conjecture holds for them.
\item[(ii)] $\rk E(\Q)\ne \rk E^d(\Q)$ if and only if $\psi_d(-N)=-1$, where $\psi_d$ is the quadratic Dirichlet character attached to the quadratic field $\Q(\sqrt{d})$. 
\end{itemize}
\end{theorem}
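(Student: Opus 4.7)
The plan is to derive the theorem as an explicit unpacking of Theorem 4.3 of \cite{KrizLi}, exposing those choices (modular parametrization $\varphi_E$, N\'eron differential $\omega_E$, embedding $\sigma:K\hookrightarrow\Q_2$) that are implicit in the original formulation but need to be tracked for our intended applications.

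The heart of the Kriz-Li argument is a $2$-adic congruence machine for Heegner points. First, one expresses the $2$-adic formal group logarithm of $\pi_E(P_{N,K})\in E(K)\hookrightarrow E(\Q_2)$ as a $2$-adic avatar of a central derivative value $L'(E/K,1)$, by way of the anticyclotomic $p$-adic Gross-Zagier formula of Bertolini-Darmon-Prasanna specialized to $p=2$. The normalizing prefactor $\#\tilde{E}_2^{ns}(\F_2)/(2\cdot c(\varphi_E))$ appearing in \eqref{CondKL} is precisely the correction that converts this logarithm into a quantity whose $2$-integrality and $2$-adic unit property are independent of the auxiliary choices: the Manin constant $c(\varphi_E)$ compensates for the discrepancy between $\varphi_E^*\omega_E$ and the normalized form $f(q)\,dq/q$, while the factor $\#\tilde{E}_2^{ns}(\F_2)$ absorbs the local component at $2$.

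Granting \eqref{CondKL} for $E$, one transfers the $2$-adic unit property to each admissible twist $E^d$ through an explicit congruence of Heegner points: the hypothesis that $q$ splits in $K$ makes a Heegner point for $E^d$ available relative to the \emph{same} $K$, the parity condition $a_q(E)\equiv 1\bmod 2$ defining $\Qcal(E,K)$ controls the $2$-adic logarithm under the quadratic twist, and $d\equiv 1\bmod 4$ keeps the relevant discriminant $2$-adically tame. A unit $2$-adic logarithm for $E^d$ then forces either $L(E^d,1)\ne 0$ or $L'(E^d,1)\ne 0$; the Gross-Zagier-Kolyvagin theorem converts this into part (i), namely $\rk E^d(\Q)\le 1$ together with the rank part of the Birch and Swinnerton-Dyer conjecture. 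Part (ii) is then a functional equation computation: the root number of $L(E^d,s)$ differs from that of $L(E,s)$ by the factor $\psi_d(-N)$, so the analytic (and therefore, by (i), the algebraic) ranks of $E$ and $E^d$ differ in parity precisely when $\psi_d(-N)=-1$.

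The main obstacle is bookkeeping rather than new arithmetic: one must verify that \eqref{CondKL}, formulated for an arbitrary (not necessarily optimal) $\varphi_E$ and an arbitrary global N\'eron differential $\omega_E$, is a faithful transcription of the more rigidly normalized statement in \cite{KrizLi}. The role of the Manin constant $c(\varphi_E)$ is precisely to allow this flexibility, while $\#\tilde{E}_2^{ns}(\F_2)$ is the local normalization at $2$; matching these against the integral structure used in \emph{loc.\ cit.} will consume most of the verification, but introduces no essentially new ingredient.
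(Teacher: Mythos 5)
Your sketch is essentially the same as the paper's account: the paper explicitly presents this theorem as a restatement of Theorem 4.3 in \cite{KrizLi}, and the remarks following the statement record precisely the reduction you describe (the Kriz-Li congruence of $2$-adic logarithms of Heegner points transfers non-triviality of the Heegner point to $E^d$; Gross-Zagier and Kolyvagin, together with the auxiliary non-vanishing theorems of Murty-Murty and Bump-Friedberg-Hoffstein, give item (i); and a root-number computation gives item (ii)). The only point you leave implicit is the role of \cite{MM} and \cite{BFH} in Kolyvagin's argument, which the paper cites explicitly.
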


Let us make some observations regarding condition \eqref{CondKL}.
\begin{itemize}

\item $\log_{2,\omega_E}$ depends on the choice of the N\'eron differential $\omega_E$ only up to sign, which is irrelevant for condition \eqref{CondKL}.

\item The choice of $\varphi_E$ affects both $c(\varphi_E)$ and $\pi_E(P_{N,K})$, but the quantity in \eqref{CondKL} remains the same up to sign.

\item $P_{N,K}\in J_0(N)(K)$ also depends on the choice of a factorization $(N)=\nfrak\nfrak'$ in $O_K$, but only up to multiplication by $-1$ and adding torsion. Hence $\log_{2,\omega_E}(\sigma(\pi_E(P_{N,K})))$ remains the same up to sign.

\item The proof in \cite{KrizLi} also shows that the quantity in \eqref{CondKL} is a $2$-adic integer. So, the condition actually is about indivisibility by $2$.

\end{itemize}

It is worth pointing out that Theorem \ref{ThmKL} (cf.\ Theorem 4.3 in \cite{KrizLi}) is a consequence of a very general congruence formula for Heegner points. In fact, the theory in \cite{KrizLi} (for $p=2$) actually shows that the quantity in \eqref{CondKL} is congruent modulo $2$ to the analogous quantity for the twisted elliptic curve $E^d$. Thus, condition \eqref{CondKL} implies that the analogous indivisibility holds for $E^d$ and, in particular, the corresponding Heegner points in $E(K)$ and $E^d(K)$ are non-torsion (as their logarithms are non-zero). This, together with known results on the Birch and Swinnerton-Dyer conjecture  (due to Kolyvagin \cite{Kol}, Gross-Zagier \cite{GZ}, Murty-Murty \cite{MM}, and Bump-Friedberg-Hoffstein \cite{BFH}) implies item (i) in Theorem \ref{ThmKL}. 

Regarding item (ii) of Theorem \ref{ThmKL}, the condition $\psi_d(-N)=-1$ ensures that the global root numbers of $E$ and $E^d$ are different, so that (ii) follows from (i). The next result is Corollary 5.11 in \cite{KrizLi}, which shows that this parity condition takes a particularly simple form when \eqref{CondKL} holds and the Tamagawa factor $c_2(E/\Q)$ is odd.
\begin{corollary}\label{CoroKL} Let us keep the same notation and assumptions of Theorem \ref{ThmKL} and let $\Delta_E\in\Z$ be the minimal discriminant of $E$. Assume condition \eqref{CondKL} for $E$ and that $c_2(E/\Q)$ is odd.  We have $\rk E(\Q)\ne \rk E^d(\Q)$ if and only if $\Delta_E>0$ and $d<0$.
\end{corollary}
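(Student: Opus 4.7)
The plan is to leverage the equivalence already established in Theorem~\ref{ThmKL}(ii), namely that
$$\rk E(\Q)\neq\rk E^d(\Q)\iff \psi_d(-N)=-1,$$
and to show that under the extra hypothesis that $c_2(E/\Q)$ is odd the sign $\psi_d(-N)$ equals $-1$ precisely when $\Delta_E>0$ and $d<0$. The whole task therefore reduces to an explicit evaluation of the Kronecker symbol $\psi_d(-N)\in\{\pm 1\}$.

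First I would split $\psi_d(-N)=\psi_d(-1)\cdot\psi_d(N)$. Since $d\equiv 1\bmod 4$ is squarefree, $\psi_d(-1)=\mathrm{sgn}(d)$. I would then compute $\psi_d(N)$ prime-by-prime. The Heegner hypothesis forces every $p\mid 2N$ to split in $K$, and by construction every prime $q\mid d$ splits in $K$ as well. Applying quadratic reciprocity (which is streamlined by $d\equiv 1\bmod 4$), each Kronecker symbol $\psi_d(p)=\bigl(\tfrac{p}{|d|}\bigr)$ at odd $p\mid N$ can be rewritten in terms of the splitting of $p$ in $K$, which together with the analysis at the prime $2$ collapses $\psi_d(N)$ to a product depending only on $\psi_d(2)^{v_2(N)}$ and on the parities $v_p(\Delta_E)\bmod 2$ at the remaining bad primes. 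The decisive use of the extra hypothesis then enters: by Tate's algorithm, oddness of $c_2(E/\Q)$ restricts the possible Kodaira types at $2$ and controls both $v_2(N)\bmod 2$ and $v_2(\Delta_E)\bmod 2$, fixing the $2$-adic contribution in a uniform way. Assembling these ingredients should yield $\psi_d(N)=1$ when $d>0$ and $\psi_d(N)=\mathrm{sgn}(\Delta_E)$ when $d<0$, so that $\psi_d(-N)=\mathrm{sgn}(d)\cdot\psi_d(N)=-1$ if and only if $d<0$ and $\Delta_E>0$.

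The main obstacle I anticipate is the Kronecker-symbol bookkeeping in the prime-by-prime reciprocity step: one must verify that every bad-prime contribution other than the one at $2$ is absorbed by the Heegner splitting of $p$ in $K$ combined with the splitting of the prime divisors of $d$ in $K$, so that the oddness of $c_2(E/\Q)$ alone is left to manufacture the sign $\mathrm{sgn}(\Delta_E)$. This is also where the asymmetry between the $d>0$ and $d<0$ cases should enter, through the factor $(-1)^{(|d|-1)/2}$ arising in $\bigl(\tfrac{-1}{|d|}\bigr)$.
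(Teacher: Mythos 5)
The paper does not actually prove this corollary: it quotes Corollary~5.11 of \cite{KrizLi} verbatim and refers the reader there, so there is no internal argument to compare against. Assessing your plan on its own terms: the reduction via Theorem~\ref{ThmKL}(ii) to deciding when $\psi_d(-N)=-1$, the split $\psi_d(-N)=\psi_d(-1)\psi_d(N)$ with $\psi_d(-1)=\mathrm{sgn}(d)$, and the target formula for $\psi_d(N)$ are all correct. The gap is in the central step that is supposed to deliver that formula.

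You propose to compute $\psi_d(p)$ for odd $p\mid N$ from the facts that $p$ splits in $K$ (Heegner hypothesis) and that each $q\mid d$ splits in $K$ (one of the conditions defining $\Qcal(E,K)$), via reciprocity. This cannot work: splitting in $K$ is $\left(\tfrac{D_K}{p}\right)=\left(\tfrac{D_K}{q}\right)=1$, which by reciprocity only says $p$ and $q$ are quadratic residues modulo $|D_K|$; two primes that each split in a fixed imaginary quadratic field can have either value of $\left(\tfrac{q}{p}\right)$, so these hypotheses carry no information about $\psi_d(p)=\left(\tfrac{d}{p}\right)$. Worse, nothing in the data you invoke refers to $\Delta_E$ at all, so $\mathrm{sgn}(\Delta_E)$ cannot emerge from this bookkeeping. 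The input you are missing is the remaining defining condition of $\Qcal(E,K)$, namely $a_q(E)\equiv 1\bmod 2$ for every $q\mid d$. Exactly as in the paper's proof of Lemma~\ref{LemmaPrimes}, this forces $\rho_{E[2]}(\Frob_q)$ to be a $3$-cycle in $GL_2(\F_2)\simeq S_3$, hence an element of $A_3$, hence $\Frob_q$ fixes the quadratic subfield $\Q(\sqrt{\Delta_E})$ of $\Q(E[2])$; that is, $\left(\tfrac{\Delta_E}{q}\right)=1$ for every $q\mid d$ and so $\left(\tfrac{\Delta_E}{|d|}\right)=1$. It is this identity, combined with $\psi_d(-N)$ via reciprocity, $\left(\tfrac{-1}{|d|}\right)=\mathrm{sgn}(d)$ for $d\equiv 1\bmod 4$, and the control at $p=2$ coming from the oddness of $c_2(E/\Q)$, that produces $\mathrm{sgn}(\Delta_E)$ in the final answer. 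Your plan mistakenly assigns that role to the Heegner splitting in $K$ and to $c_2(E/\Q)$ alone, and without the odd-trace condition the prime-by-prime analysis leaves $\psi_d(N)$ undetermined.
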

In particular, if $\rk E(\Q)=0$, then Corollary \ref{CoroKL} can only ensure $\rk E^d(\Q)>0$ for negative values of $d$, under favorable conditions. Later we will be interested in taking $d=-q$ for $q$ a prime. So, it will be crucial that the set $\Qcal_-(E,K)$ be non-empty; we must avoid case (iii) in Lemma \ref{LemmaPrimes}.



\section{Ranks and Hilbert's tenth problem} \label{SecProof}

In this section we prove Theorem \ref{ThmMainIntro}. For this, we will need an auxiliary elliptic curve to which we will apply Theorem \ref{ThmRkZero} (with $\ell=3$) and Theorem \ref{ThmKL}. The chosen elliptic curve is 557b1 in Cremona's notation, which has minimal Weierstrass equation over $\Z$
$$
 y^2+y=x^3-x^2-268x+1781.
$$

When using this elliptic curve, we will simply indicate ``$E=557b1$''.


\subsection{Compatibility in the Birch and Swinnerton-Dyer conjecture}\label{SecBSD}

The Birch and Swinnerton-Dyer conjecture for abelian varieties over number fields (as formulated in \cite{Tate}) enjoys a number of compatibility properties, such as compatibility under product, isogenies \cite{Cassels, Tate, MilneADT}, and base change \cite{MilneInv}. The same compatibilities hold regarding the finiteness of the  $p$-primary part of the Shafarevich-Tate group and the $p$-adic valuation of the conjectural special value formula (cf.\ Remark 7.4 in \cite{MilneADT}). We will use these compatibilities to reduce the verification of hypothesis in Theorem \ref{ThmRkZero} for $\ell=3$ to a computation involving elliptic curves over $\Q$. 

This has a practical purpose, since exact numerical computations with elliptic curves over number fields are more difficult than over $\Q$. More concretely, we will need to check that $\Sha(E/\Q(\sqrt{-3})[3]$ is trivial in a specific case, which involves a $3$-descent to compute the relevant $3$-Selmer group. However, the algorithm to compute $3$-Selmer groups seems to be implemented in Magma only for elliptic curves over $\Q$.

We recall that if $E$ is an elliptic curve over $\Q$ with $L$-function $L(E,s)$ and real period $\Omega_E$, then the modularity of $E$ implies that $L(E,s)$ is entire and that $L(E,1)/\Omega_E$ is a rational number that can be exactly, explicitly, and efficiently computed by the theory of modular symbols: it has small denominator (a divisor of $2\cdot c(\varphi_E)\cdot  \# E(\Q)_{tor}$ for any modular parametrization $\varphi_E$), so a good numerical approximation suffices.

\begin{proposition}\label{PropBSD}
Let $E$ be a semi-stable elliptic curve over $\Q$ with good reduction at the prime $3$. Let $E'=E^{-3}$ be the quadratic twist of $E$ by $-3$. Suppose that 
\begin{itemize}
\item[(i)] $\rk E(\Q)=0$ and $\rk E'(\Q)=0$.
\item[(ii)] $\Sha(E/\Q)[3^\infty]$ and $\Sha(E'/\Q)[3^\infty]$ are finite.
\item[(iii)]  The $3$-adic valuation of the rational numbers $L(E,1)/\Omega_E$ and $L(E',1)/\Omega_{E'}$ is as predicted by the $3$-part of the Birch and Swinnerton-Dyer conjecture.
\end{itemize}
Then $\rk E(\Q(\mu_3))=0$, the group $\Sha(E/\Q(\mu_3))[3^\infty]$ is finite, and we have
$$
\frac{L(E,1)}{\Omega_E}\cdot \frac{L(E',1)}{\Omega_{E'}}\sim_3 \frac{\Tam(E/\Q(\mu_3))\cdot \# \Sha(E/\Q(\mu_3))[3^\infty]}{\#E(\Q(\mu_3))_{tor}^2}.
$$
Furthermore, if  $3\nmid \#\tilde{E}_3(\F_3)$, then $3\nmid \#E(\Q(\mu_3))_{tor}$.
\end{proposition}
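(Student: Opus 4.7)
The plan is to apply the standard compatibilities of the Birch and Swinnerton-Dyer formalism under quadratic base change, cf.\ Paragraph \ref{SecBSD}.

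\emph{Rank and Shafarevich-Tate group.} Let $\chi:G_\Q\to\{\pm 1\}$ be the quadratic character cutting out $K=\Q(\mu_3)=\Q(\sqrt{-3})$, so that $E'=E^{-3}$ is the quadratic twist of $E$ by $\chi$. Artin formalism yields
$$L(E/K,s)=L(E/\Q,s)\cdot L(E'/\Q,s),$$
and the Weil restriction $\operatorname{Res}_{K/\Q}E_K$ is $\Q$-isogenous to $E\times E'$. By compatibility of ranks and of finiteness of $\Sha$ under $\Q$-isogeny (cf.\ \cite{Cassels, Tate, MilneADT}),
$$\rk E(K)=\rk E(\Q)+\rk E'(\Q)=0,$$
and $\Sha(E/K)[3^\infty]$ is finite thanks to hypothesis (ii).

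\emph{The $3$-part of BSD over $K$.} Next, I would invoke the compatibility of the $3$-part of BSD under quadratic base change (Remark 7.4 of \cite{MilneADT}): granted the $3$-part of BSD for $E$ and $E'$ over $\Q$ (hypothesis (iii)), it descends to $E$ over $K$, giving
$$\frac{L(E/K,1)}{\Omega_{E/K}}\sim_3 \frac{\Tam(E/K)\cdot\#\Sha(E/K)[3^\infty]}{\#E(K)_{tor}^2}.$$
Combining this with the Artin factorization of $L$-values and the comparison of N\'eron periods
$$\Omega_{E/K}\cdot\sqrt{|\Disc(K)|} \sim \Omega_E\cdot\Omega_{E'}$$
(up to rational factors of trivial $3$-adic valuation) yields the claimed identity
$$\frac{L(E,1)}{\Omega_E}\cdot \frac{L(E',1)}{\Omega_{E'}} \sim_3 \frac{\Tam(E/K)\cdot \#\Sha(E/K)[3^\infty]}{\# E(K)_{tor}^2}.$$

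\emph{Torsion.} Suppose for contradiction that $E(K)[3]\neq 0$, and pick a nonzero $P\in E(K)[3]$. Since $K/\Q$ is Galois and $\langle P\rangle$ is $G_K$-stable, the $G_\Q$-orbit of $\langle P\rangle$ consists either of a single $\Q$-rational line or of two $G_K$-stable lines that span $E[3]$. In the second case, the triviality of the $G_K$-action on both lines combined with the fact that the mod-$3$ cyclotomic character is trivial on $G_K$ (as $K\supset\mu_3$) forces $E[3]\subset E(K)$; the action of the nontrivial element of $\Gal(K/\Q)$ on $E[3]$ then has determinant $-1\in\F_3^\times$, and its $\pm 1$-eigenspaces again produce a $\Q$-rational line $C\subset E[3]$. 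Thus in every case there is a $\Q$-rational cyclic subgroup $C\subset E[3]$. The $G_\Q$-character on $C$ factors through $\Gal(K/\Q)$ (since $G_K$ acts trivially on $C$), so it is either trivial or the mod-$3$ cyclotomic character; since the product with the $G_\Q$-character on $E[3]/C$ equals the mod-$3$ cyclotomic character (the determinant on $E[3]$), exactly one of the two characters is trivial. Hence one of the $\Q$-isogenous curves $E_0\in\{E,\,E/C\}$ has a nonzero $\Q$-rational $3$-torsion point. Since the conductor is preserved by $\Q$-isogeny, $E_0$ has good reduction at $3$, and the formal group of $E_0$ over $\Z_3$ is torsion-free (the formal logarithm converges on $3\Z_3$); therefore the reduction map $E_0(\Q)_{tor}\hookrightarrow \tilde{E}_{0,3}(\F_3)$ is injective, giving $3\mid \#\tilde{E}_{0,3}(\F_3)$. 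But $E_0$ is $\Q$-isogenous to $E$, so $a_3(E_0)=a_3(E)$ and $\#\tilde{E}_{0,3}(\F_3)=\#\tilde{E}_3(\F_3)$, contradicting the hypothesis.

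\emph{Main obstacle.} The conceptual ingredients (Artin formalism, isogeny-invariance of the BSD quantities, and reduction theory) are standard; the genuinely delicate step is the accurate $3$-adic bookkeeping in the period comparison $\Omega_{E/K}\leftrightarrow \Omega_E\cdot\Omega_{E'}$, where the transcendental factor $\sqrt{3}$ must be balanced against the contribution of the ramified Tamagawa factor at $\pfrak\mid 3$ so as to yield an exact identity of $3$-adic valuations.
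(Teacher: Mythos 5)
Your treatment of the rank, the finiteness of $\Sha(E/\Q(\mu_3))[3^\infty]$, and the $3$-adic BSD comparison follows the same route as the paper: identify the Weil restriction of $E/\Q(\mu_3)$ to $\Q$ with $E\times E'$ up to $\Q$-isogeny, invoke isogeny- and base-change compatibility of the $3$-part of BSD (Remark 7.4 of \cite{MilneADT}), factor $L(E/\Q(\mu_3),s)=L(E,s)L(E',s)$, and compare periods. The step you flag as ``delicate $3$-adic bookkeeping'' is settled in the paper by citing Pal \cite{Pal}: since $E$ is semi-stable with good reduction at $3$, the period of $E/\Q(\mu_3)$ computed with a N\'eron differential over $\Z$ equals $\Omega_E\Omega_{E'}\sqrt{3}$ up to powers of $2$, and this $\sqrt{3}$ cancels against the $\sqrt{|\Disc\Q(\mu_3)|}$ appearing in the BSD formula over $\Q(\mu_3)$. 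One small correction to your closing remark: nothing is ``balanced against the ramified Tamagawa factor at $\pfrak\mid 3$'' --- that factor is $1$ because $E$ has good reduction at $3$; the $\sqrt{3}$ is absorbed by the period comparison itself.

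Your argument for the torsion statement is genuinely different from the paper's, and in fact more careful. The paper disposes of it in one line by asserting that $E(\Q(\mu_3))_{tor}$ injects into $\tilde{E}_v(\F_v)\simeq\tilde{E}_3(\F_3)$ at the ramified place $v\mid 3$. But $\Q(\mu_3)/\Q$ is ramified at $3$ with $e=2=3-1$, which is exactly the borderline case where formal-group torsion in the kernel of reduction is not automatically excluded (it genuinely occurs for $\widehat{\mathbb{G}}_m$ over $\Z_3[\mu_3]$, for instance), so the paper's one-liner needs an extra word. Your argument sidesteps the issue entirely by descending to $\Q$: a nonzero $P\in E(\Q(\mu_3))[3]$ forces (via the orbit of $\langle P\rangle$ under $\Gal(\Q(\mu_3)/\Q)$ and, if needed, the $\pm1$-eigenspaces) a $\Q$-rational line $C\subset E[3]$; by the Weil pairing exactly one of the characters on $C$ and on $E[3]/C$ is trivial, so one of the $\Q$-isogenous curves $E$, $E/C$ has a $\Q$-rational $3$-torsion point; over $\Q_3$ the ramification index is $1<p-1$, so reduction is injective and $3\mid\#\tilde{E}_3(\F_3)$, contradicting the hypothesis. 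This is correct and buys a cleaner justification than the paper provides.
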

\begin{proof} Note that $\Q(\mu_3)=\Q(\sqrt{-3})$. By (i) we have $\rk E(\Q(\mu_3))=\rk E(\Q)+\rk E'(\Q)=0$.

Let $A$ be the abelian surface over $\Q$ obtained as Weil restriction of scalars of $E/\Q(\mu_3)$ to $\Q$. Then $A$ is isogenous over $\Q$ to $E\times E'$. By (ii) and compatibilities, we get that $\Sha(E/\Q(\mu_3))[3^\infty]$ is finite.

 The period of $E/\Q(\mu_3)$ can be computed using a global N\'eron differential $\omega_E$ of a minimal model of $E$ over $\Q$ (as $E$ is semi-stable), which up to powers of $2$ gives $|\int_{E(\C)}\omega_E\wedge\overline{\omega_E}|$. Up to powers of $2$, this is equal to $\Omega_E\Omega_{E'}\sqrt{3}$ because $E$ has good reduction at $3$ (cf.\ \cite{Pal}). 
 
 By (iii) and compatibilities, the $3$-part of the Birch and Swinnerton-Dyer conjecture holds for $E/\Q(\mu_3)$. Thus we have
$$
\frac{\Tam(E/\Q(\mu_3))\cdot \# \Sha(E/\Q(\mu_3))[3^\infty]}{\#E(\Q(\mu_3))_{tor}^2}\sim_3 \frac{L(E/\Q(\mu_3),1)\sqrt{3}}{|\int_{E(\C)}\omega_E\wedge\overline{\omega_E}|}
$$
where the $\sqrt{3}$ comes from the discriminant of $\Q(\mu_3)$ (see \cite{DDbsd} for an explicit statement of the conjectural Birch and Swinnerton-Dyer formula for abelian varieties over number fields). Artin formalism gives $L(E/\Q(\mu_3),s) = L(E,s)L(E',s)$, and the claimed formula follows.

Finally, let $v$ be the only place of $\Q(\mu_3)$ over $3$, and note that $v|3$ is ramified.  Since $E$ has good reduction at $3$, we have that $E(\Q(\mu_3))_{tor}$ injects in $\tilde{E}_v(\F_v)\simeq  \tilde{E}_3(\F_3)$, proving the final claim.
\end{proof}

We remark that the semi-stability condition is just for convenience and it can be relaxed with a more careful analysis; for our purposes this is enough.


\subsection{Keeping rank zero in many cubic extensions}

\begin{lemma} \label{LemmaRkZero557b1} Let $E=557b1$. Define the set of primes
$$
\Pcal = \{p :  p\equiv 1\bmod 3 \mbox{ and } a_p(E)\not\equiv 2\bmod 3\}. 
$$
Then $\Pcal$ is a Chebotarev set of primes of density $5/16$ and for each $p\in\Pcal$ we have 
$$
\rk E(\Q(\sqrt[3]{p}))=0.
$$
\end{lemma}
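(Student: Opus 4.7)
The plan is to apply Theorem~\ref{ThmRkZero} with $\ell=3$ and $E=557b1$. With $\ell=3$, Proposition~\ref{PropCheb} gives
$$
\delta(\Pcal(E,3)) = \frac{3^2-3-1}{(3-1)(3^2-1)} = \frac{5}{16},
$$
and since $557 \equiv 2 \pmod 3$ the congruence condition automatically excludes $p=557$, so the set $\Pcal$ of the lemma coincides with $\Pcal(E,3)$. What remains is to verify the five hypotheses (A)--(E) of Theorem~\ref{ThmRkZero} for this specific $E$.

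Hypotheses (A) and (E) reduce to short numerical checks: $3 \nmid 557$ gives good reduction at $3$, and one reads off $a_3(E) \bmod 3$, the Tamagawa factor $\Tam(E/\Q(\mu_3))$ and $\#\tilde E_3(\F_3)$ from standard tables (e.g.\ the LMFDB, or a short Magma computation). Hypothesis (B), surjectivity of $\rho_{E[3]}$, is confirmed by the absence of rational $3$-isogenies in the isogeny class of $557b$, which leaves only the possibility that the image is all of $\mathrm{GL}_2(\F_3)$; this is made rigorous by computing a couple of Frobenius traces modulo $3$ to rule out the remaining proper subgroups.

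For the rank and Selmer conditions (C) and (D), the strategy is to pass from $\Q(\mu_3) = \Q(\sqrt{-3})$ down to $\Q$ via the quadratic twist $E' := E^{-3}$ using Proposition~\ref{PropBSD}. Concretely, I would first compute the rational numbers $L(E,1)/\Omega_E$ and $L(E',1)/\Omega_{E'}$ exactly through modular symbols and check both are nonzero; by the theorems of Kolyvagin and Gross--Zagier this gives $\rk E(\Q) = \rk E'(\Q) = 0$. Next I would perform explicit $3$-descents on $E$ and $E'$ over $\Q$ to show $\Sel_3(E/\Q) = \Sel_3(E'/\Q) = 0$; combined with Kolyvagin's finiteness theorem this forces $\Sha(E/\Q)[3^\infty] = \Sha(E'/\Q)[3^\infty] = 0$, which in turn settles the $3$-part of the BSD formula for both curves. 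Proposition~\ref{PropBSD} then delivers $\rk E(\Q(\mu_3)) = 0$ together with the identity
$$
\frac{L(E,1)}{\Omega_E}\cdot \frac{L(E',1)}{\Omega_{E'}}\sim_3 \frac{\Tam(E/\Q(\mu_3))\cdot \# \Sha(E/\Q(\mu_3))[3^\infty]}{\#E(\Q(\mu_3))_{\mathrm{tor}}^2},
$$
from which the $3$-adic valuation of $\#\Sha(E/\Q(\mu_3))[3^\infty]$ can be read off; combined with the torsion bound at the end of Proposition~\ref{PropBSD} (applicable because hypothesis (E) already gives $3 \nmid \#\tilde E_3(\F_3)$), this should force $\Sha(E/\Q(\mu_3))[3] = 0$ and complete the verification of (D). Theorem~\ref{ThmRkZero} then gives $\rk E(\Q(\sqrt[3]{p})) = 0$ for every $p \in \Pcal$.

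The main obstacle is the explicit $3$-descent on the twist $E^{-3}$ and the careful $3$-adic bookkeeping of local factors on both sides of the BSD identity, so that the ``$\sim_3$'' in Proposition~\ref{PropBSD} really pins down $v_3(\#\Sha(E/\Q(\mu_3))[3^\infty]) = 0$ rather than just an upper bound. All of these are routine (if delicate) Magma computations for this very particular curve; the choice $E = 557b1$ is made precisely so that every invariant lands on the correct side of the $3$-adic accounting.
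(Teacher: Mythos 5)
Your proposal matches the paper's proof essentially step for step: both apply Theorem~\ref{ThmRkZero} with $\ell=3$, compute the density via Proposition~\ref{PropCheb}, and reduce conditions~(C) and (D) to a $3$-descent on $E$ and $E' = E^{-3} = 5013a1$ over $\Q$ together with the $L$-value identity of Proposition~\ref{PropBSD}. One small economy the paper exploits that you might prefer: rather than computing $\Tam(E/\Q(\mu_3))$ directly (which requires Tate's algorithm over $\Q(\sqrt{-3})$), the paper reads off $\frac{L(E,1)}{\Omega_E}\cdot\frac{L(E',1)}{\Omega_{E'}} = 2$ and $\#\tilde E_3(\F_3)=2$, so the displayed $\sim_3$ identity forces the \emph{product} $\Tam(E/\Q(\mu_3))\cdot\#\Sha(E/\Q(\mu_3))[3^\infty]$ to be a $3$-adic unit; since both factors are positive integers, this delivers conditions (D) and the Tamagawa half of (E) simultaneously, with no separate local computation over $\Q(\mu_3)$ needed.
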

\begin{proof} It suffices to check the conditions \eqref{A} - \eqref{E} in Theorem \ref{ThmRkZero} for $\ell=3$. 

In lmfdb.org \cite{LMFDB} one checks that $E$ has good ordinary reduction at $3$ and that $\rho_{E[3]}:G_\Q\to GL_2(\F_3)$ is surjective, thus Conditions \eqref{A} and \eqref{B} are satisfied.

Consider the quadratic twist $E'=E^{-3}$, which has Cremona label 5013a1. Let us apply Proposition \ref{PropBSD}. The following code in Magma checks that both $\Sel_3(E/\Q)$ and $\Sel_3(E'/\Q)$ are trivial: 
\begin{verbatim}
E1:=EllipticCurve("557b1");
E2:=EllipticCurve("5013a1");
Order(ThreeSelmerGroup(E1));
Order(ThreeSelmerGroup(E2));
      1
      1
\end{verbatim}
In particular, we obtain:
\begin{itemize}
\item $\rk E(\Q)=0$ and $\rk E'(\Q)=0$
\item $\Sha(E/\Q)[3]$ and $\Sha(E/\Q)[3]$ are trivial. Hence, $\Sha(E/\Q)[3^\infty]$ and $\Sha(E/\Q)[3^\infty]$ are also trivial.
\end{itemize}
The exact analytic order of the Shafarevich-Tate groups of $E$ and $E'$ over $\Q$ is $1$ (see for instance lmfdb.org). This is the order predicted by the BSD conjecture, and the computation is exact since the rank is $0$.

Therefore, conditions (i), (ii) and (iii) in Proposition \ref{PropBSD} hold. In addition, $\# \tilde{E}_3(\F_3)= 2$ so $3\nmid \#E(\Q(\mu_3))_{tor}$ and we get
$$
\frac{L(E,1)}{\Omega_E}\cdot \frac{L(E',1)}{\Omega_{E'}}\sim_3 \Tam(E/\Q(\mu_3))\cdot \#\Sha(E/\Q(\mu_3))[3^\infty].
$$
The number on the left is known to be a rational number of small denominator (cf.\ Paragraph \ref{SecBSD}), so the approximation 
$$
\frac{L(E,1)}{\Omega_E}\cdot \frac{L(E',1)}{\Omega_{E'}}\approx \frac{4.14294}{4.14294}\cdot \frac{1.36207 }{0.68103} \approx 2.00001
$$
implies that the number is in fact $2$. Thus, Proposition \ref{PropBSD} and the fact that $\# \tilde{E}_3(\F_3)= 2$ show that conditions \eqref{C}, \eqref{D}, and \eqref{E} of Theorem \ref{ThmRkZero} hold for $E$ and $\ell=3$.
\end{proof}

\subsection{Increasing the rank in many quadratic extensions}

Our next goal is to produce many twists of $E=557b1$ having positive rank over $\Q$. For this we will use the results in Section \ref{SecKL}, which in particular involve the computation of certain logarithm on $p$-adic points of elliptic curves. The next observation will allow us to truncate the relevant power series with controlled $p$-adic precision in our computations.
\begin{lemma}\label{LemmaConv} Let $F(t)=b_0+b_1 t+b_2 t^2+...\in \Z_p[[t]]$ and let $a\in p\Z_p$. Suppose that for certain $n\ge 1$ the integer
$$
m=v_p\left(ab_0 + \frac{a^2b_1}{2} +...+ \frac{a^{n}b_{n-1}}{n}\right)
$$
satisfies $m<n-(\log n)/(\log p)$. Then 
$$
v_p\left(\int_0^a F(t)dt\right) =m.
$$
\end{lemma}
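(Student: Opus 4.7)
The plan is to split the formal integral as the given partial sum plus a tail, and show that the tail has $p$-adic valuation strictly greater than $m$; by the non-archimedean triangle inequality this will force $v_p(\int_0^a F(t)\,dt)=m$.

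Concretely, I would write
\begin{equation*}
\int_0^a F(t)\,dt = S_n + R,\qquad S_n=\sum_{k=1}^{n}\frac{a^k b_{k-1}}{k},\qquad R=\sum_{k\ge n+1}\frac{a^k b_{k-1}}{k},
\end{equation*}
so that $v_p(S_n)=m$ by hypothesis. Since $a\in p\Z_p$, $b_{k-1}\in\Z_p$, and $p^{v_p(k)}\le k$, each term of the tail satisfies
\begin{equation*}
v_p\!\left(\frac{a^k b_{k-1}}{k}\right)\ \ge\ k\,v_p(a)-v_p(k)\ \ge\ k-\frac{\log k}{\log p}=:\phi(k).
\end{equation*}

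It remains to show $\phi(k)>m$ for every $k\ge n+1$. For integers $j\ge 1$ and $p\ge 2$ one has $(j+1)/j\le 2\le p$, and therefore
\begin{equation*}
\phi(j+1)-\phi(j)=1-\frac{\log((j+1)/j)}{\log p}\ \ge\ 1-\frac{\log 2}{\log p}\ \ge\ 0,
\end{equation*}
so $\phi$ is non-decreasing on the positive integers. Consequently, for $k\ge n+1\ge n$ we get $\phi(k)\ge \phi(n)>m$ by the hypothesis, and thus every term of the tail has valuation strictly greater than $m$. This gives $v_p(R)>m=v_p(S_n)$, whence $v_p(S_n+R)=m$, as desired.

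The argument is essentially elementary once the monotonicity of $\phi$ on the integers is handled; there is no genuine obstacle beyond this small-$k$ bookkeeping, which is precisely why the hypothesis is phrased using $\log n/\log p$ rather than $\log(n+1)/\log p$.
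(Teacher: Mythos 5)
Your proposal is correct and follows essentially the same route as the paper: bound each tail term by $v_p(a^k b_{k-1}/k)\ge k - v_p(k)\ge k-\log k/\log p$, then conclude by the ultrametric inequality that the partial sum determines the valuation. The only difference is that you explicitly verify the monotonicity of $\phi(k)=k-\log k/\log p$ on the positive integers (which the paper's terse "using this for $r>n$" leaves implicit), a worthwhile detail to make the argument airtight.
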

\begin{proof}
Observe that for all $r\ge 1$ we have 
$$
v_p(a^rb_{r-1}/r)\ge r + 0 - v_p(r)\ge r-\frac{\log r}{\log p}
$$ 
because $a\in p\Z_p$ and $b_j\in \Z_p$. Using this for $r>n$ we get
$$
v_p\left(ab_0 + \frac{a^2b_1}{2} +...+ \frac{a^{n}b_{n-1}}{n}-\int_0^a F(t)dt\right) >m,
$$
hence the result.
\end{proof}
Next we produce the required twists of positive rank for $E=557b1$.
\begin{lemma} \label{LemmaRkOne557b1} Let $E=557b1$. Define the set of primes
$$
\Qcal = \{ q :  q\equiv -1\bmod 4,\ (q/7)=1,\ a_q(E)\equiv 1\bmod 2\}. 
$$
Then $\Qcal$ is a Chebotarev set of primes of density $1/12$ and for each $q\in\Qcal$ we have 
$$
\rk E(\Q(\sqrt{-q}))=1.
$$
\end{lemma}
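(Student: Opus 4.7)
The plan is to apply Theorem \ref{ThmKL} and Corollary \ref{CoroKL} of Kriz and Li with the imaginary quadratic field $K=\Q(\sqrt{-7})$, twisting by $d=-q$ to obtain elliptic curves $E^{-q}$ of positive rank over $\Q$. Together with Lemma \ref{LemmaRkZero557b1}, this will give $\rk E(\Q(\sqrt{-q}))=\rk E(\Q)+\rk E^{-q}(\Q)=0+1=1$.

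First I would check that $K=\Q(\sqrt{-7})$ satisfies the Heegner hypothesis for $2N=2\cdot 557$: the condition $-7\equiv 1\bmod 8$ shows that $2$ splits in $K$, and a short reciprocity computation using $557\equiv 4\bmod 7$ gives that $557$ splits in $K$ as well. Next I would identify $\Qcal$ with the set $\Qcal_-(E,K)$ from Lemma \ref{LemmaPrimes}. For this, one uses that for $q\equiv -1\bmod 4$, quadratic reciprocity yields $(-7/q)=(-1/q)(7/q)=(-1)\cdot(-(q/7))=(q/7)$, so $(q/7)=1$ is equivalent to $q$ splitting in $K$; note also that the congruence $q\equiv -1\bmod 4$ automatically excludes $q=2$ and $q=557$. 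I would then consult lmfdb.org \cite{LMFDB} to verify that $E=557b1$ satisfies the hypotheses of case (ii) of Lemma \ref{LemmaPrimes}: $E(\Q)[2]=(0)$, $\Gal(\Q(E[2])/\Q)\simeq S_3$, and the unique quadratic subfield of $\Q(E[2])$, which equals $\Q(\sqrt{557})$, differs from both $\Q(\sqrt{-1})$ and $K=\Q(\sqrt{-7})$. This yields $\delta(\Qcal)=1/12$.

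The crux of the argument is the explicit verification of the integrality condition (\ref{CondKL}). Since $N=557$ is prime (hence odd and squarefree), Manin's conjecture holds and one can take $c(\varphi_E)=1$ for an optimal modular parametrization $\varphi_E\colon X_0(N)\to E$. Since $2\nmid N$, $E$ has good reduction at $2$, so $\#\tilde{E}_2^{ns}(\F_2)=\#\tilde{E}_2(\F_2)$ is read off from a standard table. The essential computation is that of the Heegner point $P_{N,K}\in J_0(N)(K)$: one evaluates it classically on the upper half plane, pushes it forward to $\pi_E(P_{N,K})\in E(K)$ via $\varphi_E$, and then computes the $2$-adic logarithm $\log_{2,\omega_E}(\sigma(\pi_E(P_{N,K})))$ along an embedding $\sigma\colon K\to\Q_2$ associated with one of the primes of $K$ above $2$. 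Using the power series expansion (\ref{EqDiff}) of $\omega_E$ in the local parameter $t=-x/y$ and Lemma \ref{LemmaConv} to control precision, one checks that the product appearing in (\ref{CondKL}) is a $2$-adic unit.

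Once (\ref{CondKL}) is established, Corollary \ref{CoroKL} applies: for each $q\in\Qcal$ the integer $d=-q$ is squarefree, satisfies $d\equiv 1\bmod 4$, is supported on $\Qcal(E,K)$, and is negative. From the tables one checks that the minimal discriminant $\Delta_E=557$ is positive and that the Tamagawa factor $c_2(E/\Q)=1$ is odd (indeed $E$ has good reduction at $2$), so the corollary yields $\rk E^{-q}(\Q)\ne \rk E(\Q)$. Since $\rk E(\Q)=0$ by Lemma \ref{LemmaRkZero557b1} and $\rk E^{-q}(\Q)\le 1$ by Theorem \ref{ThmKL}(i), we conclude $\rk E^{-q}(\Q)=1$ and hence $\rk E(\Q(\sqrt{-q}))=1$. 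The main obstacle is the $2$-adic logarithm computation: it requires accurately producing the Heegner point on a modular curve of level $557$ and tracking enough $2$-adic precision to confirm unit-ness, a calculation that is finite but nontrivial, and must be carried out with computer algebra support.
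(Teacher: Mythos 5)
Your proposal follows essentially the same path as the paper: applying Theorem \ref{ThmKL} and Corollary \ref{CoroKL} with $K=\Q(\sqrt{-7})$, verifying condition \eqref{CondKL} by an explicit $2$-adic logarithm computation at the Heegner point (with Lemma \ref{LemmaConv} controlling truncation), and invoking case (ii) of Lemma \ref{LemmaPrimes} to get density $1/12$. The only differences are cosmetic: you spell out the quadratic reciprocity identifying $\Qcal$ with $\Qcal_-(E,K)$ and the Heegner hypothesis check by hand, whereas the paper delegates these and the $\Gal(\Q(E[2])/\Q)$ computation to Sage/lmfdb, and the paper records the concrete numeric output ($v(t(P))=1$, $v(t(P)-t(P)^3/3)=1$, $\#\tilde E_2(\F_2)=1$) that you indicate must be obtained by computer.
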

\begin{proof}
We are going to apply Theorem \ref{ThmKL} and Corollary \ref{CoroKL} to $E$ and $K=\Q(\sqrt{-7})$. Then we will use Lemma \ref{LemmaPrimes} to show that the set of primes $\Qcal$ has the required properties.

First, we note that $K$ satisfies the Heegner hypothesis for $2N=2\cdot 557$. This is because a prime $\ell$ splits in $K$ if and only if $\ell$ is a quadratic residue modulo $7$.

The elliptic curve $E$ has good reduction at $2$ and $\#\tilde{E}_2(\F_2)=1$. It admits an optimal modular parametrization $\varphi: X_0(557)\to E$ (see lmfdb.org) and for this $\varphi$ the Manin constant is $c(\varphi)=1$  as $557$ is odd and squarefree, cf.\ Paragraph \ref{SecModular} (alternatively, there are efficient algorithms to directly compute $c(\varphi)$). Let $\pi: J_0(557)\to E$ be the corresponding optimal quotient. Choosing an embedding $\sigma: K\to \Q_2$, we need to show that Condition \eqref{CondKL} holds, i.e.
\begin{equation}\label{CondKL557b1}
v_2\left(\log_{2,\omega_E} \sigma(\pi(P_{N,K}))\right)=1
\end{equation}
where $v_2$ is the $2$-adic valuation in $\Q_2$. According to Table 2 in \cite{KrizLi} this condition indeed holds. However, the details are not included in \emph{loc.\ cit.}\ so we do the computation here. For this we use the explicit description of $\log_{2,\omega_E}$ recalled in Paragraph \ref{SecLog}.

The equation 
\begin{equation}\label{EqW}
 y^2+y=x^3-x^2-268x+1781
\end{equation}
is the reduced global minimal Weierstrass model for $E$ over $\Z$. The global N\'eron differential for this model as a power series on $t=-x/y$ (local parameter at $0_E$) is
$$
\omega_E= (1-t^2+2t^3+...)dt
$$ 
where we used \eqref{EqDiff} for the model \eqref{EqW} to compute the first few coefficients. All the coefficients in this power series are integers.

Consider the $2$-adic neighborhood of $0_E$ given by $U=\{a\in E(\Q_2) : t(a)\in 2\Z_2\}$. The logarithm map $\log_{2,\omega_E}:U\to\Q_2$ is given by
\begin{equation}\label{EqLog557b1}
\log_{2,\omega_E}(a)=t(a) -\frac{1}{3}t(a)^3 + \frac{1}{2}t(a)^4 +...
\end{equation}
The Hilbert class-field of $K=\Q(\sqrt{-7})$ is $K$ itself since the class number is $1$ so, in fact, we have (cf.\ Paragraph \ref{SecHeegner})
$$
\pi(P_{N,K})=\varphi(Q_{N,K,\nfrak})\in E(K).
$$ 
Using Sage, we do the following: For a suitable choice of $\nfrak$, we compute the point $P=\varphi(Q_{N,K,\nfrak})$ in coordinates over $K$ for the model \eqref{EqW}. Then $t=-x(P)/y(P)$ is computed and we choose a valuation $v$ on $K$ extending the $2$-adic valuation of $\Q$ ---this is the same as choosing the embedding $\sigma: K\to \Q_2$. With all of this, we compute $v(t(P))$ and $v(t(P)-t(P)^3/3)$. Here is the code:
\begin{verbatim}
E=EllipticCurve('557b1');
P=E.heegner_point(-7).point_exact(100);
t=-P[0]/P[1];
K=t.parent();
u=QQ.valuation(2);
vK=u.extensions(K); v=vK[0];
v(t); v(t-t^3/3)
      1
      1
\end{verbatim}
Since $v(t(P))=1$ we see that $\pi(P_{N,K})\in U$ and we can use \eqref{EqLog557b1} to compute the logarithm of $P$. Since $v(t(P)-t(P)^3/3)=1$ we can apply Lemma \ref{LemmaConv} with $p=2$, $F(t)=\omega_E$, and $n=3$, obtaining $v(\log_{2,\omega_E}(P))=1$. This proves that \eqref{CondKL557b1} holds.

Since $E(\Q)[2]=(0)$ and \eqref{CondKL557b1} holds, we can apply Theorem \ref{ThmKL}. Furthermore, we have $c_2(E)=1$ because $E$ has good reduction at $2$ ($N$ is odd), so we can apply Corollary \ref{CoroKL}. We have that $\rk E(\Q)=0$ and $\Delta_E=557>0$. Therefore, for all squarefree integers $d\equiv 1 \bmod 4$ supported on $\Qcal(E,K)$ with $d<0$ we have $\rk E^d(\Q)=1$.

We observe that $\Qcal = \Qcal_-(E,K)$ and for each $q\in \Qcal$, the integer $d=-q$ satisfies the previously required conditions, hence $\rk E^{-q}(\Q)=1$. Since $\rk E(\Q)=0$ we deduce $\rk E(\Q(\sqrt{-q})=1$ for each $q\in \Qcal$.

It only remains to check that $\Qcal$ (i.e.\ $\Qcal_-(E,K)$) is a Chebotarev set of density $1/12$ (in particular, that it is non-empty!). We apply Lemma \ref{LemmaPrimes}. The degree of the field $L=\Q(E[2])$ and the discriminant of its only quadratic subfield can be computed on Sage as follows:
\begin{verbatim}
E=EllipticCurve('557b1');
L.<a> = E.division_field(2); 
L.degree();
L.subfields(2)[0][0].discriminant()
      6
      557
\end{verbatim}
Thus, we are in case (ii) of Lemma \ref{LemmaPrimes}, which proves what we wanted.
\end{proof}


\subsection{Proof of the main result} 

\begin{proof}[Proof of Theorem \ref{ThmMainIntro}] Consider the elliptic curve $E=557b1$ and let $\Pcal$ and $\Qcal$ be the sets of primes given by Lemmas \ref{LemmaRkZero557b1} and \ref{LemmaRkOne557b1}. For each $p\in\Pcal$ and each $q\in\Qcal$ we have that $\rk E(\Q(\sqrt[3]{p}))=0$ and $\rk E(\Q(\sqrt{-q}))=1$. By Proposition \ref{PropMain} with $M=\Q$, $F=\Q(\sqrt[3]{p})$ and $L=\Q(\sqrt{-q})$ we get that the extension $\Q(\sqrt[3]{p}, \sqrt{-q})/\Q(\sqrt[3]{p})$ is integrally Diophantine.

Since $\Q(\sqrt[3]{p})$ has exactly one complex place, the extension $\Q(\sqrt[3]{p})/\Q$ is integrally Diophantine by the Pheidas-Shlapentokh-Videla theorem \cite{Pheidas, ShlH10, Videla}. Therefore, Lemma \ref{LemmaTransfer} gives that $\Z$ is Diophantine in $\Q(\sqrt[3]{p},\sqrt{q})$.
\end{proof}


\section{Acknowledgments}

The first author was supported by the FONDECYT Iniciaci\'on en Investigaci\'on grant 11170192, and the CONICYT PAI grant 79170039. The second author was supported by FONDECYT Regular grant 1190442.

We gratefully acknowledge the hospitality of the IMJ-PRG in Paris during our visit in February 2019, where the final ideas of the project were developed. We heartily thank Lo\"ic Merel for this invitation and for several enlightening discussions on elliptic curves. 

The final technical details of this work were worked out and presented in May 2019 at the AIM meeting ``Definability and Decidability problems in Number Theory''. We thank the AIM for their hospitality and the participants for their valuable feedback. Specially, we thank Karl Rubin and Alexandra Shlapentokh for their interest and technical remarks.

Comments by Matias Alvarado, Chao Li, and Barry Mazur on an earlier version of this manuscript are gratefully acknowledged.


\end{document}